\documentclass{amsart}

\usepackage{amssymb}

\newtheorem{theorem}{Theorem}[section]
\newtheorem{lemma}[theorem]{Lemma}
\newtheorem{proposition}[theorem]{Proposition}
\newtheorem{corollary}[theorem]{Corollary}
\theoremstyle{definition}

\theoremstyle{remark}
\newtheorem{remark}[theorem]{Remark}
\newtheorem*{thmi}{Theorem I}
\numberwithin{equation}{section}

\newcommand{\R}{{\mathbb R}}

\renewcommand{\H}{\mathcal{H}}

\author{Mark Allen}
\address{Department of Mathematics, Purdue University, West Lafayette,
  IN 47907}
\email{allenma@math.purdue.edu}
\thanks{M.~Allen is supported by  Purdue Research Foundation and in part by NSF grant DMS-1101139}

\title[Free Boundary Problem]{Separation of a lower dimensional free boundary in a two phase problem}

\begin{document}

\begin{abstract}
We study minimizers of the energy functional
\[   
\int_{D}{|x_n|^a |\nabla u|^2} + \int_{D \cap (\R^{n-1} \times \{0\} )}{\lambda^+ \chi_{ \{u > 0\} } + \lambda^- \chi_{ \{u<0\} }} \ d\H^{n-1}
\]
without any sign restriction on the function $u$. The main result states that the two free boundaries 
\[
\Gamma^+ = \partial \{u(\ \cdot \ , 0) > 0\}  \text{ and }
\Gamma^- = \partial \{u(\ \cdot \ , 0) < 0\} 
\]
cannot touch. i.e. $\Gamma^+ \cap \Gamma^- = \emptyset$
\end{abstract}

\maketitle
\section{Introduction}    \label{S: introduction}
This paper aims to study the local properties of a two phase free boundary problem for the fractional Laplacian.  
Recently, in \cite{mA11} the following free boundary problem for the half laplacian has been studied.
For a function $u \in C(\R^N)$ and domain $D$ consider the problem
\begin{equation}\label{E: fractional1}
\begin{aligned}
(-\Delta)^{1/2} u(x)=0 &\quad\text{in }D \cap \{u > 0\}\\
\lim_{y \to x} \frac{u(y)}{((y-x) \cdot \nu(x))^{1/2}} = A &\quad\text{ if } x \in D \cap \partial\{u=0\}
\end{aligned}
\end{equation}
The study of \eqref{E: fractional1} presents certain difficulties since the fractional laplacian is a global operator. Many of the common techniques for studying free boundaries are unavailable. However, one may work in $\R^n = \R^{N+1}$ and a common reformulation of the half laplacian is the following
\[
(- \Delta)^{1/2}u(x') = \lim_{x_n \to 0} \tilde{u}_{x_n}(x',x_n) \text{  for } x' \in \R^N
\] 
where 
\[
\begin{aligned}
\Delta \tilde{u} =0 \text { in } \R_{+}^{N+1} \\
\tilde{u}(x',0) = u(x')
\end{aligned}
\]
By adding the extra dimension one may then study a localized version of the free boundary problem \eqref{E: fractional1} by studying minimizers of the functional
\[
\int_{D}{|\nabla u|^2} + \int_{D \cap (\R^{n-1} \times \{0\} )}{ \chi_{ \{u > 0\} } } \ d\H^{n-1}
\]
Since the above functional gives study to a one phase problem, it is natural to study the corresponding two phase problem
\[
\int_{D}{|\nabla u|^2} + \int_{D \cap (\R^{n-1} \times \{0\} )}{\lambda^+ \chi_{ \{u > 0\} } + \lambda^- \chi_{ \{u<0\} }} \ d\H^{n-1}
\]
which has been done in \cite{mA11}. One may generalize the study of the free boundary problem \eqref{E: fractional1} by considering other powers $0<s<1$ of the fractional laplacian. In \cite{CS} the appropriate extension theorem was proven enabling one to give a reformulation of the fractional laplacian by adding an extra dimension. By adding an extra dimension, the study of \eqref{E: fractional1} with $s$ replacing $1/2$ is reduced to the study of the minimizers of the functional
\[
\int_{D}{|x_n|^a|\nabla u|^2} + \int_{D \cap (\R^{n-1} \times \{0\} )}{\chi_{ \{u > 0\} }} \ d\H^{n-1}
\]
where $a=1-2s$ and $n=N+1$. 
This problem has been recently studied in \cite{lC10}. This paper will study the corresponding two-phase problem and extend one of the main results in \cite{mA11} to the general fractional case when $0<s<1$. 
This paper will then focus on the localized two phase problem which is to consider minimizers of the functional
\begin{equation}  \label{E: functional}
\int_{D}{|x_n|^a|\nabla u|^2} + \int_{D \cap (\R^{n-1} \times \{0\} )}{\lambda^+ \chi_{ \{u > 0\} } + \lambda^- \chi_{ \{u<0\} }} \ d\H^{n-1}
\end{equation}
over the class
\[
H^1(a,D) \overset{\text{def}}{=} \{v \in L^2(D) \mid |y|^{a/2} \nabla v \in L^2(D)\}
\]
and such that $u - \phi \in H_0^1(a,D)$ for a prescribed $\phi$. From the relation $0<s<1$ and $s=(1-a)/2$ it follows that $a$ will vary in the range $-1<a<1$. Throughout the paper we assume that $\lambda^+$ and $\lambda^-$ are positive constants. By use of the extension theorem given in \cite{CS} it is natural to make the assumptions that  $D$ and $\phi$ are symmetric about the hyperplane $\R^{n-1} \times \{0\}$; however, we will not make these assumptions in this paper since the proofs presented will not rely on even symmetry.  

The main study of this paper concerns the local properties of the two free boundaries
\[
\Gamma^+ = \partial \{u( \ \cdot \ , 0) > 0 \} \text{ and } \Gamma^- = \partial \{u( \ \cdot \ , 0) < 0 \}
\]
with the boundary being defined by the topology of $\R^{n-1} \times \{0\}$.
 
\subsection*{Motivation and Applications}
The motivation for studying the problem \eqref{E: functional} comes from recognizing the similarity to the problem of studying minimizers of the functional
\begin{equation}    \label{E: classical}
J(u) = \int_{D}{|\nabla u|^2 + \lambda^+ \chi_{ \{u>0\} }  + \lambda^- \chi_{ \{u<0\} }}
\end{equation}
which has been done in \cite{MR732100}. 
The minimizers of \eqref{E: classical} are generalized solutions of a classical two-phase free
boundary problem
\begin{equation}\label{E: FBP-classic}
\begin{aligned}
\Delta u=0&\quad\text{in }\{u>0\}\cup\{u<0\}\\
|\nabla u^+|^2-|\nabla u^-|^2=M&\quad\text{on }\partial\{u>0\}\cup\partial\{u<0\},
\end{aligned}
\end{equation}
with $M=(\lambda^+)^2-(\lambda^-)^2$. The study of problem \eqref{E: FBP-classic} has applications in two dimensional flow problems as well as in heat flow. In one specific application, the problem \eqref{E: FBP-classic} arises in a simplified model for premixed equidiffusional flames, in the stationary case, in the limit as $\epsilon \to 0+$ of a singular perturbation problem, see e.g.\ \cite{CLW}. By measuring the positivity and negativity on the boundary $\R^{n-1} \times \{0\}$, minimizers of \eqref{E: functional} can be seen as the limit of solutions to a boundary reaction problem, see e.g. \ \cite{mA11}.

In modeling, when long range interactions are present, it is relevant to replace
the Laplacian by nonlocal operators, such as the fractional
Laplacian. See survey papers \cite{MR1937584} and 
\cite{MR1081295}. 

\subsection*{Main Results}
As previously mentioned the two phase case of \eqref{E: functional} has been recently studied in \cite{mA11} under the additional assumption that $a=0$ (or $s=1/2$). By restricting the two phase problem to the case in which $a=0$, the authors in \cite{mA11} were able to use more technical tools such as the Alt-Caffarelli-Friedman monotonicity formula. One of the main results in \cite{mA11} is that if $a=0$, then $\Gamma^+ \cap \Gamma^- = \emptyset$. i.e. the free boundaries $\Gamma^+$ and $\Gamma^-$ cannot touch. This result is in complete contrast to many two phase free boundary problems. Often the interphase  $\Gamma^+ \cap \Gamma^-$ is difficult to study. In the classical two-phase free boundary problem in \eqref{E: FBP-classic}, the two-phase points create a major
complication even in the proof of the optimal (Lipschitz in that case)
regularity of solutions, see \cite{MR732100}. The separation of $\Gamma^+$ and $\Gamma^-$ is useful in that it reduces the two-phase free boundary problem to the one-phase free boundary problem. That is, locally minimizers have a sign in $\R^{n-1} \times \{0\}$, and so we may assume either $\lambda^+=0$ or $\lambda^- =0$.
In the case $a=0$, after one establishes optimal regularity and nondegeneracy, the separation of $\Gamma^+$ and $\Gamma^-$ is an immediate consequence of the Alt-Caffarelli-Friedman (ACF) monotonicity formula which was introduced and proven in \cite{MR732100}. The main result of this paper is the separation of the free boundaries for the more general case in which $a \neq 0$, namely

\begin{thmi}   \label{T: acase}
Let $-1<a<1$ and let $u$ be a minimizer to the functional in \eqref{E: functional}. Then $\Gamma^+ \cap \Gamma^- = \emptyset$. Furthermore, if $x_0 \in \Gamma^+ \ (x_0 \in \Gamma^-)$ then there exists $r>0$ such that $u \geq 0 \ (u \leq 0)$ in $B_r(x_0)$.
\end{thmi}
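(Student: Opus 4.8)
The strategy is to argue by contradiction: suppose $x_0 \in \Gamma^+ \cap \Gamma^-$, and after a translation and dilation assume $x_0 = 0$. The heart of the matter is a blow-up analysis. Since $a = 0$ is excluded, the ACF monotonicity formula is unavailable, so the plan is to replace it with the standard toolbox for minimizers: optimal regularity (one expects $u \in C^{0,s}$, or $C^{0,1-a \over ?}$ type regularity appropriate to the weight $|x_n|^a$), nondegeneracy of $u^\pm$ near points of $\Gamma^\pm$, and a monotonicity formula of Weiss type adapted to the weight. First I would record the scaling: the functional \eqref{E: functional} is invariant under $u \mapsto u_r(x) := r^{-s} u(rx)$ in the sense that rescaled minimizers are minimizers of the same functional (with the same $\lambda^\pm$), because the bulk term $\int |x_n|^a |\nabla u|^2$ scales like $r^{n-1}$ under this normalization — matching the scaling $r^{n-1}$ of the surface term — precisely when the homogeneity is $s = (1-a)/2$. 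This is the degree that makes both terms in the Weiss energy scale the same way.

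Next I would introduce the Weiss-type boundary-adjusted energy
\[
W(r, u) = \frac{1}{r^{n-1}} \left( \int_{B_r} |x_n|^a |\nabla u|^2 + \int_{B_r \cap \{x_n = 0\}} \lambda^+ \chi_{\{u>0\}} + \lambda^- \chi_{\{u<0\}} \, d\H^{n-1} \right) - \frac{s}{r^n} \int_{\partial B_r} |x_n|^a u^2 \, d\H^{n-1},
\]
and show it is monotone nondecreasing in $r$ for minimizers, with $W(0^+, u) = \lim_{r \to 0} W(r,u)$ existing and finite by nondegeneracy and optimal regularity. Monotonicity of $W$ forces any blow-up limit $u_0 = \lim u_{r_j}$ (which exists along subsequences, in a suitable weighted $C^{0,\alpha}_{loc}$ and $H^1(a)_{loc}$ topology, again by the regularity/nondegeneracy estimates) to be a homogeneous minimizer of degree $s$. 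The nondegeneracy guarantees that if $0 \in \Gamma^+ \cap \Gamma^-$, the blow-up $u_0$ genuinely changes sign on $\R^{n-1} \times \{0\}$, so $0$ remains a two-phase point for $u_0$. Thus the problem is reduced to a \emph{Liouville-type classification}: show there is no nontrivial $s$-homogeneous minimizer $u_0$ of \eqref{E: functional} whose trace changes sign through the origin.

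The classification of homogeneous minimizers is where I expect the main obstacle to lie, and it is the step that in the $a=0$ case was handled instantly by ACF. The plan is to exploit homogeneity: on the unit sphere the minimality translates into an eigenvalue-type constraint, and one can try to rule out sign-changing configurations by a competitor argument — perturbing $u_0$ near the would-be two-phase point on $\{x_n=0\}$ by filling in one phase and comparing energies, using that the surface penalty $\lambda^+\chi_{\{u>0\}} + \lambda^-\chi_{\{u<0\}}$ is strictly positive on both sides. The subtlety is that, unlike in the thin one-phase problem, the weight $|x_n|^a$ degenerates or blows up on $\{x_n = 0\}$, so the trace and the normal derivative interact delicately; one must control the $|x_n|^a$-weighted normal flux across $\{x_n = 0\}$. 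A clean way to close this is to establish a two-dimensional reduction (the problem on $\{x_n = 0\}$ near a two-phase point is essentially governed by the behavior in the plane spanned by the normal to $\Gamma^+$ and $e_n$), reduce to an explicit ODE in the weighted half-plane, and show that matching the required homogeneity $s$ with a genuine sign change is incompatible with minimality — e.g. by showing the energy of the sign-changing candidate strictly exceeds that of a one-phase homogeneous solution of the same degree. Once no such $u_0$ exists, the blow-up contradiction gives $\Gamma^+ \cap \Gamma^- = \emptyset$, and the final clause ($u \geq 0$ in some $B_r(x_0)$ for $x_0 \in \Gamma^+$) follows because $x_0 \notin \overline{\{u(\cdot,0) < 0\}}$, so $u(\cdot, 0) \geq 0$ in a thin ball, and then $u \geq 0$ in a full ball by the maximum principle for the degenerate-elliptic operator $\operatorname{div}(|x_n|^a \nabla \cdot)$ applied to $u^-$, which is $|x_n|^a$-harmonic off $\{x_n=0\}$ and has nonpositive trace.
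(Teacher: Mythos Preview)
Your overall framework---blow-up, optimal $C^{0,s}$ regularity, nondegeneracy, and a Weiss-type monotonicity formula forcing blow-ups to be homogeneous of degree $s=(1-a)/2$---matches the paper exactly, and your formula for $W(r,u)$ is correct. The divergence comes at the classification step and in the second clause.

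\textbf{Classification of homogeneous blow-ups.} Here you propose competitor arguments (``filling in one phase''), a two-dimensional ODE reduction, or an energy comparison with one-phase profiles. None of these is worked out, and it is not clear any of them closes. The paper takes a different and much shorter route: it never uses minimality of the blow-up $u_0$, only that $u_0$ is continuous, homogeneous of degree $s$, has nontrivial positive and negative parts, and satisfies $\mathcal{L}_a u_0=0$ off $\Lambda(u_0)$. Restricting to the sphere, $u_0$ is an eigenfunction of the spherical operator $-\mathcal{L}_a^\theta$ on $\Omega=S^{n-1}\setminus\Lambda(u_0)$ with eigenvalue $\gamma=s(s+n-2+a)$. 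Because $u_0$ changes sign, it is not the principal eigenfunction on $\Omega$, so $\gamma\geq\gamma_2(\Omega)$. The Courant--Fischer max-min principle gives $\gamma_2(\Omega)\geq\lambda_2(S^{n-1})$, and the second eigenvalue on the full sphere corresponds to homogeneity degree at least $\min\{1,1-a\}$ (this last fact is proved separately, using that $a$-harmonic functions are $C^1$ in tangential directions). Since $s=(1-a)/2<\min\{1,1-a\}$ for every $a\in(-1,1)$, this is a contradiction. The point is that the obstruction is purely spectral; no variational competitor is needed.

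\textbf{The second clause.} Your argument here has a genuine gap. Knowing $u(\cdot,0)\geq 0$ on a thin ball $B_r'$ and applying the maximum principle to $u^-$ does not give $u\geq 0$ in a solid ball: $u^-$ is only $a$-subharmonic, and you have no control of $u^-$ on $\partial B_r\cap\{x_n\neq 0\}$, so the maximum principle says nothing. The paper instead argues via blow-up again: it shows the blow-up $u_0\geq 0$ in all of $\R^n$ (using the classification above), then that $u_0$ is strictly positive on $\{|x_n|\geq 1/2\}\cap B_1$ by Harnack (ruling out $u_0\equiv 0$ on a half-space via an odd reflection and the same homogeneity obstruction). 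By $C^\alpha$ convergence, $u_{r_k}\geq\delta/2$ on $\{|x_n|\geq 1/2\}\cap B_1$ and $u_{r_k}(\cdot,0)\geq 0$ on $B_1'$ for large $k$. The sign in $B_{1/2}^+$ then comes from comparing $u_{r_k}$ with an $a$-harmonic replacement and using a boundary Harnack estimate for the $a$-harmonic measures of $\partial(B_1^+)\cap\{x_n\geq 1/2\}$ and $\partial(B_1^+)\cap\{0<x_n<1/2\}$: both behave like $|x_n|^{1-a}$ near the thin space, and the positive contribution from the first set dominates the small negative part of $u_{r_k}$ on the second.
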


The ACF monotonicity formula provides a simple proof to Theorem I when $a=0$. Therefore, it would be natural in seeking to prove Theorem I to try to prove a generalization of the ACF monotonicity formula that applies to solutions of div$(|x_n|^a \nabla u) \geq 0$. Unfortunately, the proof of such a formula would require much more than mere adaptations to the proof of the classical ACF monotonicity formula.  In this paper we provide an alternate method that gives a relatively simple proof of Theorem I and does not utilize a generalization of the ACF formula. In its place we utilize a Weiss-type  monotonicity formula (defined in Section \ref{S: nondegeneracy}) that is an adaptation of the Weiss-type monotonicity formula given in \cite{mA11}.  The proof that the functional in \eqref{E: weiss} is monotone requires only slight modifications of the proof provided in \cite{mA11}. 

\subsection*{Outline of Paper}
The outline of this paper is as follows.

- In Section \ref{S: weights} we state known results for the weight $|x_n|^a dx$ and solutions of div$(|x_n|^a  \nabla u) =0$ that we will need. 

- In section \ref{S: optimalregularity} we prove the optimal regularity of minimizers and its corollaries. 

- In section \ref{S: nondegeneracy} we use nondegeneracy and the Weiss monotonicity formula to prove that ``blow-ups'' (see \eqref{E: rescale}) of minimizers are homogeneous of degree $s=(1-a)/2$ 

- In section \ref{S: courant} we use the Courant-Fischer maximum-minimum principle to establish a lower bound for the degree of homogeneity for homogeneous solutions of div$(|x_n|^a \nabla u)=0$. 

- In section \ref{S: separation} we use the results from the previous sections to provide a simple proof of Theorem I. 

\subsection*{Notation and Terminology}
For the remainder of the paper it will be useful to use the following notation.
$B_r(x_0) := \{x \in \R^n \mid \ |x-x_0| \leq r \}$ and $B_r = B_r(0)$ the ball centered at the origin with radius r. 
We denote a point $x \in \R^n$ by $(x',x_n)$ where $x' = (x_1, \ldots, x_{n-1})$.

For any set $\Omega \subset \R^n$, we define
\[
\Omega' \overset{ \text{def} }{=} \Omega \cap (\R^{n-1} \times \{0\} )
\]
Throughout the paper we will refer to the plane $\R^{n-1} \times \{0\}$ as the thin space. Likeweise, we will call $B_{r}'$ the thin ball where as $B_r$ will be the solid ball. 
For minimizers of \eqref{E: functional} we will call the set 
\[
\Lambda(u) = (\R^{n-1} \times \{0\}) \cap \{u=0\}
\]
the coincidence set. 

We define the following two spaces
\[
\begin{aligned}
H^1(a,D) & \overset{\text{def}}{=} \{v \in L^2(D) \mid |x_n|^{a/2} \nabla v \in L^2(D)\} \\
L^2(a,D) & \overset{\text{def}}{=} \{|x_n|^{a/2} v \in L^2(D)\}
\end{aligned}
\]
We will also use $\mathcal{L}_a u$ to denote the operator div$(|x_n|^a \nabla u)$. Throughout the paper $s=(1-a)/2$. 

\section{p-admissible weights and a-harmonic functions}   \label{S: weights}
We begin this section by noting that the measure $|x_n|^a dx$ is a Muckenhoupt $A_2$ weight. In \cite{MR1207810} it is shown that Muckenhoupt $A_p$ weights are $p$-admissible weights; therefore, we have the following Sobolev inequality from \cite{MR1207810}
\begin{equation}    \label{E: asobolev}
\left( \frac{1}{| B |_{a}}  \int_{B}{|\phi|^{2 \varkappa} |x_n|^a \ dx}  \right)^{\frac{1}{2 \varkappa}}  \leq cr 
\left( \frac{1}{| B |_{a}}  \int_{B}{|\nabla \phi|^{2} |x_n|^a \ dx } \right)^{\frac{1}{2}}
\end{equation}
whenever $B=B(x_0,r)$ is a ball and $\phi \in H_0^1(a,B)$. $\varkappa >1$ and $c$ are two constants depending on $n$ and $a$. Here, $| B |_a = \int_{B}{|x_n|^a dx}$. 

The following proposition is a consequence of the compactness theorem for admissable p-weights proven in \cite{MR1455468}.

\begin{proposition}[Rellich-Kondrachov Compactness]   \label{P: compact}
Let $u_k$ be a bounded sequence in $H_0^1(a, D)$ for $D \Subset \R^n$. Then there exists a convergent subsequence such that $u_k \to u$ pointwise $a.e.$ and in norm in $L^q(a,D)$ for all $q<2 \varkappa$ for $\varkappa$ as in \eqref{E: asobolev} .
\end{proposition}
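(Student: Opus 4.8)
The plan is to deduce the statement from the weighted compactness theorem of \cite{MR1455468} together with the Sobolev inequality \eqref{E: asobolev}; the only genuine work is upgrading $L^2(a,D)$-convergence to convergence in every $L^q(a,D)$ with $q<2\varkappa$.

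First I would reduce to a ball. Since $D \Subset \R^n$, fix a ball $B$ with $\overline{D} \subset B$. As $H_0^1(a,D)$ is by definition the closure of $C_c^\infty(D)$ in the $H^1(a)$-norm, extension by zero maps $H_0^1(a,D)$ isometrically into $H_0^1(a,B)$; thus the zero-extensions of the $u_k$, still denoted $u_k$, form a bounded sequence in $H_0^1(a,B)$. Recalling that $|x_n|^a\,dx$ is an $A_2$, hence $2$-admissible, weight, the compactness theorem of \cite{MR1455468} applies on $B$: after passing to a subsequence $u_{k_j}$ we get $u_{k_j}\to u$ in $L^2(a,B)$, and therefore in $L^2(a,D)$. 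Passing to a further subsequence (not relabeled), we may also assume $u_{k_j}\to u$ pointwise a.e. in $D$. Two of the three assertions are now in hand.

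To promote this to $L^q$-convergence, combine the uniform $H^1(a)$-bound with \eqref{E: asobolev} applied on the ball $B$: there is $M<\infty$ with $\sup_j \|u_{k_j}\|_{L^{2\varkappa}(a,D)}\le M$, and by Fatou $\|u\|_{L^{2\varkappa}(a,D)}\le M$ as well. If $0<q\le 2$, then $|D|_a<\infty$ and Hölder's inequality give
\[
\|u_{k_j}-u\|_{L^q(a,D)} \le |D|_a^{1/q-1/2}\,\|u_{k_j}-u\|_{L^2(a,D)} \longrightarrow 0 .
\]
If $2<q<2\varkappa$, write $\frac{1}{q}=\frac{\theta}{2}+\frac{1-\theta}{2\varkappa}$ with $\theta\in(0,1)$; then by the standard interpolation inequality,
\[
\|u_{k_j}-u\|_{L^q(a,D)} \le \|u_{k_j}-u\|_{L^2(a,D)}^{\theta}\,\|u_{k_j}-u\|_{L^{2\varkappa}(a,D)}^{1-\theta} \le (2M)^{1-\theta}\,\|u_{k_j}-u\|_{L^2(a,D)}^{\theta} \longrightarrow 0 .
\]
Hence $u_{k_j}\to u$ in norm in $L^q(a,D)$ for every $q<2\varkappa$, which finishes the proof.

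The substantive input is the cited weighted Rellich–Kondrachov theorem, so there is no deep obstacle here. The only points that require a line of care are that the zero-extension to a ball $B\supset D$ does not increase the $H_0^1(a,\cdot)$-norm (so that \eqref{E: asobolev} becomes available and yields a uniform $L^{2\varkappa}$-bound), that the exponent $\varkappa$ produced by the Sobolev inequality is the same one named in the statement, and the routine diagonal extraction giving the pointwise a.e.\ limit.
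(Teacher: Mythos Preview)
Your argument is correct and is exactly in the spirit of the paper: the paper does not give a proof at all, merely stating that the proposition ``is a consequence of the compactness theorem for admissible $p$-weights proven in \cite{MR1455468}.'' You have simply filled in the details of that consequence, using the cited theorem for the $L^2(a,D)$ compactness and then the standard interpolation against the uniform $L^{2\varkappa}(a,D)$ bound from \eqref{E: asobolev} to reach every $q<2\varkappa$.
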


We call a function $u$ $a$-harmonic if $\mathcal{L}_a u =0$. These functions share many properties with classical harmonic functions. In \cite{MR643158} it is shown that $a$-harmonic functions are H\"older continuous. It was also shown that $a$-harmonic functions have the maximum principle, Harnack inequality, and Boundary Harnack inequality. We also have the following Almgren's type monotonicity formula which was proven in \cite{CS}.
\begin{lemma}    \label{L: almgren}
Let $\mathcal{L}_a u =0$ in $B_1$. Then 
\[
N(r,u)=r\frac{\int_{B_r}{|x_n|^a |\nabla u|^2}}{\int_{\partial B_r}{|x_n|^a u^2}} = r \frac{D(r)}{H(r)}
\]
is monotone increasing in $r$. $N(r,u)$ is constant if and only if $u$ is homogeneous of degree $k$. 
\end{lemma}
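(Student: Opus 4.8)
The plan is to run the classical Almgren frequency-function argument, with the Laplacian replaced by $\mathcal{L}_a$ and Lebesgue measure by $|x_n|^a\,dx$. Write
\[
D(r)=\int_{B_r}|x_n|^a|\nabla u|^2\,dx,\qquad H(r)=\int_{\partial B_r}|x_n|^a u^2\,d\H^{n-1},
\]
so that $N(r,u)=rD(r)/H(r)$. We assume $u\not\equiv 0$; then $H(r)>0$ for $0<r<1$ (otherwise $u$ is $a$-harmonic with vanishing trace on $\partial B_r$, so $u\equiv 0$ in $B_r$ by the maximum principle, and then $u\equiv 0$ in $B_1$ by unique continuation for $\mathcal{L}_a$), so $N$ is well defined.

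First I would differentiate $H$: rescaling to the unit sphere via $x=r\omega$ gives $H(r)=r^{n-1+a}\int_{\partial B_1}|\omega_n|^a u(r\omega)^2\,d\H^{n-1}(\omega)$, hence
\[
H'(r)=\frac{n-1+a}{r}H(r)+2\int_{\partial B_r}|x_n|^a u\,\partial_\nu u\,d\H^{n-1}.
\]
Testing the weak equation $\mathcal{L}_a u=0$ in $B_r$ against $u$ --- equivalently, integrating the pointwise identity $\operatorname{div}(|x_n|^a u\nabla u)=|x_n|^a|\nabla u|^2$ --- gives
\[
\int_{\partial B_r}|x_n|^a u\,\partial_\nu u\,d\H^{n-1}=D(r),
\]
so $H'(r)=\tfrac{n-1+a}{r}H(r)+2D(r)$. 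Next, $D'(r)=\int_{\partial B_r}|x_n|^a|\nabla u|^2\,d\H^{n-1}$, and multiplying $\mathcal{L}_a u=0$ by $x\cdot\nabla u$, integrating over $B_r$, and integrating by parts twice --- using $\operatorname{div}(|x_n|^a x)=(n+a)|x_n|^a$, which holds on $\R^n$ precisely because $a>-1$ and contributes no term concentrated on $\{x_n=0\}$ --- yields the Rellich--Pohozaev identity
\[
D'(r)=2\int_{\partial B_r}|x_n|^a(\partial_\nu u)^2\,d\H^{n-1}+\frac{n+a-2}{r}D(r).
\]

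With these three identities, logarithmic differentiation gives $\tfrac{N'}{N}=\tfrac1r+\tfrac{D'}{D}-\tfrac{H'}{H}$, and the terms proportional to $1/r$ cancel exactly, leaving
\[
\frac{N'(r)}{N(r)}=2\left(\frac{\int_{\partial B_r}|x_n|^a(\partial_\nu u)^2\,d\H^{n-1}}{D(r)}-\frac{D(r)}{H(r)}\right).
\]
Since $D(r)=\int_{\partial B_r}|x_n|^a u\,\partial_\nu u\,d\H^{n-1}$, the Cauchy--Schwarz inequality for the measure $|x_n|^a\,d\H^{n-1}$ on $\partial B_r$ gives $D(r)^2\le H(r)\int_{\partial B_r}|x_n|^a(\partial_\nu u)^2\,d\H^{n-1}$, whence $N'(r)\ge 0$. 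For the rigidity: $N'\equiv 0$ on an interval forces equality in Cauchy--Schwarz for each such $r$, i.e. $\partial_\nu u=\lambda(r)u$ on $\partial B_r$; inserting this into the identities shows $r\lambda(r)=N(r,u)\equiv k$, so $u$ solves the radial ODE $\partial_\nu u=(k/r)u$ and is therefore homogeneous of degree $k$. Conversely, if $u$ is homogeneous of degree $k$, Euler's relation $x\cdot\nabla u=ku$ together with $D(r)=\int_{\partial B_r}|x_n|^a u\,\partial_\nu u\,d\H^{n-1}$ gives $D(r)=\tfrac{k}{r}H(r)$, so $N(r,u)\equiv k$.

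The only non-routine point is justifying the two integrations by parts across the degenerate hyperplane $\{x_n=0\}$. I would carry them out first on $B_r\cap\{|x_n|>\varepsilon\}$, where $u$ is smooth by interior elliptic regularity and the weight is smooth and bounded between two positive constants, and then let $\varepsilon\to 0$; one also checks that the boundary integrands (e.g. $|x_n|^a|\nabla u|^2$ on $\partial B_r$) are integrable, which again uses $-1<a<1$. The flux terms on the slabs $\{|x_n|=\varepsilon\}$ cancel in the limit, because the transmission condition implicit in $u$ being a weak solution of $\mathcal{L}_a u=0$ across $\{x_n=0\}$ matches the one-sided weighted normal derivatives, and these are bounded --- indeed continuous up to $\{x_n=0\}$ --- by the regularity theory for $a$-harmonic functions recalled in Section~\ref{S: weights} and its references. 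Once these passages to the limit are justified the computation above goes through verbatim; I expect this boundary analysis, rather than the frequency computation itself, to be the main obstacle.
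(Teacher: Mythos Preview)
Your proposal is correct and follows essentially the same route as the paper: compute $H'$, establish the energy identity $D(r)=\int_{\partial B_r}|x_n|^a u\,\partial_\nu u$, derive the Rellich--Pohozaev identity for $D'$, and conclude monotonicity and rigidity via Cauchy--Schwarz exactly as in \cite{CS}. The only noteworthy difference is in how the two integrations by parts across $\{x_n=0\}$ are justified. You propose to work on $B_r\cap\{|x_n|>\varepsilon\}$ and pass to the limit, relying on continuity of the weighted normal derivative from the regularity theory for $a$-harmonic functions; this works but, as you correctly anticipate, is where the real effort lies. The paper instead avoids this issue entirely: the energy identity is obtained by using $\eta_k u$ as a test function in the weak formulation (with $\eta_k$ the radial cutoff \eqref{E: etak}), and the Pohozaev identity is obtained as the special case $\lambda^\pm=0$ of the domain-variation computation carried out in the proof of the Weiss formula (Theorem~\ref{T: weiss}), which never integrates by parts in the $x_n$ direction and hence produces no flux terms on $\{|x_n|=\varepsilon\}$ to control. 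The domain-variation route is therefore somewhat cleaner here, though both lead to the same identities.
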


Our assumptions are slightly different from those given in \cite{CS}; namely we do not assume even symmetry in the $x_n$ variable. The modified proof is therefore placed in the appendix.

\begin{lemma}   \label{L: lowerbound}
Let $\mathcal{L}_a u =0$ in $B_1$ with $u$ not identically zero. Assume also that $u(0)=0$. Then 
\[
\lim_{r \to 0} N(r,u) = N(0+,u) \geq \min \{1, 1-a\}
\] 
\end{lemma}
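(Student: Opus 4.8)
The plan is to reduce the statement to a lower bound on the degree of homogeneity of an $a$-harmonic \emph{blow-up} at the origin, and then to establish that lower bound by a Courant--Fischer argument on the relevant Rayleigh quotient. First I would observe that by Lemma~\ref{L: almgren}, $N(r,u)$ is monotone increasing in $r$, so the limit $N(0+,u)$ exists (it is bounded below since $N(r,u)\geq 0$). Set $d = N(0+,u)$ and consider the rescalings
\[
u_r(x) = \frac{u(rx)}{\left(H(r)/|\partial B_r|_a\right)^{1/2}},
\]
normalized so that $\int_{\partial B_1}|x_n|^a u_r^2 = 1$. Using the energy bound coming from $N(r,u_r)=N(r,u)\to d$ together with the Rellich--Kondrachov compactness of Proposition~\ref{P: compact}, a subsequence $u_{r_j}$ converges (strongly in $L^2(a,B_1)$ and weakly in $H^1(a,B_1)$) to a limit $u_0$ that is $a$-harmonic in $B_1$, not identically zero (the normalization is preserved by strong $L^2(a,\partial B_1)$ convergence), and for which $N(\rho,u_0)\equiv d$ for all $\rho<1$; by the rigidity clause of Lemma~\ref{L: almgren}, $u_0$ is homogeneous of degree $d$. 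Note also that $u_0(0)=0$ is inherited from $u(0)=0$ via uniform H\"older continuity of $a$-harmonic functions.

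Next I would reduce the problem to a spherical eigenvalue problem. Writing $u_0(x) = |x|^d g(x/|x|)$ in polar coordinates adapted to the weight $|x_n|^a$, the equation $\mathcal{L}_a u_0 = 0$ becomes an eigenvalue equation for $g$ on the sphere $\partial B_1$ with the weighted measure $|x_n|^a\,d\H^{n-1}$: the degree $d$ is determined by $d(d+n-2+a) = \mu$, where $\mu$ is the corresponding eigenvalue of the weighted Laplace--Beltrami-type operator. Since $u_0(0)=0$ and $u_0\not\equiv 0$, $g$ cannot be the constant eigenfunction (eigenvalue $0$, degree $0$), so $\mu$ is at least the first nonzero eigenvalue $\mu_1$. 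By Courant--Fischer,
\[
\mu_1 = \min\left\{ \frac{\int_{\partial B_1}|x_n|^a|\nabla_\theta \psi|^2}{\int_{\partial B_1}|x_n|^a \psi^2} \ \Big|\ \int_{\partial B_1}|x_n|^a\psi = 0,\ \psi\not\equiv 0 \right\}.
\]
The task is then to show $\mu_1 \geq \min\{1,1-a\}\cdot(\min\{1,1-a\} + n-2+a)$, equivalently that the degree satisfies $d\geq\min\{1,1-a\}$; the explicit degree-$\min\{1,1-a\}$ homogeneous solutions ($x_1$ when $a\leq 0$, and $|x_n|^{1-a}\operatorname{sgn}(x_n)$ or $|x_n|^{1-a}$ when $a\geq 0$) show this bound is sharp and identify the extremal eigenfunctions.

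The main obstacle is the last step: proving the eigenvalue lower bound $\mu_1\geq$ the stated threshold. I expect to handle it by testing the Rayleigh quotient carefully. The clean approach is to compare with the one-dimensional problem in the $x_n$-direction and the flat spherical Laplacian in the $x'$-directions, using the product structure of the weight; a test function $\psi$ orthogonal to constants can be split into a part depending only on $x_n$ (governed by the weighted ODE whose first eigenvalue gives degree $1-a$) and a part with zero mean on each latitude sphere (governed by the ordinary spherical Laplacian, whose first eigenvalue gives degree $1$), and one checks the cross terms vanish by the orthogonality/symmetry of the weight in $x_n$. Combining these gives $d\geq\min\{1,1-a\}$. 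A technical point to be careful about is integrability near the equator $\{x_n=0\}$ when $a$ is close to $1$ (so the weight degenerates there), which is where the $A_2$/$p$-admissibility of the weight and the Sobolev inequality \eqref{E: asobolev} are needed to justify the density of smooth test functions and the validity of the variational characterization of $\mu_1$.
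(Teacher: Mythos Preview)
Your blow-up construction matches the paper's: rescale so that $\|u_r\|_{L^2(a,\partial B_1)}=1$, use the uniform H\"older continuity of $a$-harmonic functions together with a Caccioppoli inequality to extract a limit $u_0$ that is $a$-harmonic, nontrivial, vanishes at the origin, and is homogeneous of degree $d=N(0+,u)$. The paper then defers to Theorem~\ref{T: originregularity} for the bound $d\geq\min\{1,1-a\}$.

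Where you diverge is in how that last bound is obtained. The paper's Theorem~\ref{T: originregularity} does \emph{not} compute the first nonzero eigenvalue on the full sphere. Instead it imports an external regularity fact: $a$-harmonic functions are $C^1$ in the tangential $(x',0)$ directions. Combined with homogeneity of degree $\alpha<1$, this forces $u_0\equiv 0$ on the thin space, so $u_0|_{S^{n-1}}$ lies in $H_0^1(a,S_+^{n-1})$; one then only has to compare with the principal Dirichlet eigenfunction $x_n^{1-a}$ there (positive, degree $1-a$), which is immediate. Your route is more self-contained---no appeal to tangential regularity---but it obliges you to identify the ground states of two singular Sturm--Liouville problems on $(0,\pi)$: that $\operatorname{sgn}(\cos\theta)\,|\cos\theta|^{1-a}$ is the first nonconstant $\theta$-only eigenfunction, and that for the zero-latitude-mean block the minimum over $\ell\geq 1$ is attained at $\ell=1$ with ground state $\sin\theta$. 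Both claims are correct (positivity / single sign-change arguments), though the weight's singularity at $\theta=\pi/2$ means the oscillation theory needs the care you flag. One small correction: the even function $|x_n|^{1-a}$ is not globally $a$-harmonic (the conormal flux $|x_n|^a\partial_{x_n}u$ jumps across $\{x_n=0\}$); only the odd extension $\operatorname{sgn}(x_n)|x_n|^{1-a}$ serves as the degree-$(1-a)$ extremal.
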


\begin{proof}
It is easy to verify that $N(\rho, u_r)=N(r\rho, u)$ for the rescalings 
\[
u_r(x) := \frac{u(rx)}{\frac{1}{r^{n-1+a}}\int_{\partial B_r}{|x_n|^a u^2}}
\]
and $\| u_r \|_{L^2(a,\partial B_1)} =1$. Now $\mathcal{L}_a u_r =0$ in $B_{1/r}$ and from the uniform H\"older continuity  provided in \cite{MR643158} and the Sobolev inequality \eqref{E: asobolev}, we may extract a subsequence such that $u_r \to u_0$ in $C^{\beta}(B_{\rho})$ and weakly in $H^1(a, B_{\rho})$ for $\rho <1$. The strong convergence in $H^1(a, B_{\rho})$ follows by using the Caccioppoli inequality for $a$-harmonic functions
\[
\int_{B_{\rho}}{|\nabla(u_r - u_0)|^2 |x_n|^a} \leq \frac{C}{(r-\rho)^2}\int_{B_r}{|u_r-u_0|^2 |x_n|^a}
\]
Now 
\[
N(\rho, u_0) = \lim_{r \to 0} N(\rho, u_r) = \lim_{r \to 0} N(r \rho, u) = N(0+,u) 
\]
So $\mathcal{L}_a u_0 =0$ in $B_1$ and is homogenous of degree $k = N(0+,u)$. $u_0$ is not identically zero since $\| u_0 \|_{L^2(a,\partial B_1)} =1$. We now only need to conclude that $k \geq \min \{1, 1-a\}$. Since $u_0(0)=0$, this is a direct consequence of Theorem \ref{T: originregularity}.  Theorem \ref{T: originregularity} has been placed in Section \ref{S: courant} for purposes of readibility of the paper. 
\end{proof}

From Almgren's monotonicity formula we may prove the following Lemma.

\begin{lemma}  \label{L: menergy}
If $\mathcal{L}_a u =0$ in $B_R(y',0)$  then 
\[
\frac{1}{r^{n-|a|}} \int_{B_r(y',0)}{|x_n|^a |\nabla u|^2}
\]
is monotone increasing in $r$.
\end{lemma}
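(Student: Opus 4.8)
The plan is to read off the claim from Almgren's frequency formula (Lemma~\ref{L: almgren}) and the homogeneity lower bound of Lemma~\ref{L: lowerbound}, after observing that the quantity in question splits as the product of two monotone factors. Throughout write $D(r)=\int_{B_r}|x_n|^a|\nabla u|^2$, $H(r)=\int_{\partial B_r}|x_n|^a u^2$ and $N(r,u)=rD(r)/H(r)$.

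\textbf{Reduction.} Since the weight $|x_n|^a$ is invariant under translations parallel to the thin space, and both the hypothesis $\mathcal{L}_a u=0$ and the integral $D(r)$ are unaffected by translation in the $x'$ variables, we may take $y'=0$, so that all balls are centered at the origin. If $u$ is constant then $D(r)\equiv 0$ and there is nothing to prove. Otherwise replace $u$ by $u-u(0)$ (recall $a$-harmonic functions are continuous, so $u(0)$ is well defined): the new function is again $a$-harmonic, is not identically zero, vanishes at the origin, and — having the same gradient — has the same $D(r)$. Hence we may assume $u(0)=0$.

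\textbf{A weighted monotonicity for $H$.} Integrating by parts against $\mathcal{L}_a u=0$ gives $D(r)=\int_{\partial B_r}|x_n|^a u\,u_\nu$, and differentiating the scaled form $H(r)=r^{n-1+a}\int_{\partial B_1}|\omega_n|^a u(r\omega)^2\,d\sigma$ yields $H'(r)=\frac{n-1+a}{r}H(r)+2D(r)$, so that $\frac{H'(r)}{H(r)}=\frac{n-1+a}{r}+\frac{2N(r,u)}{r}$. By Lemma~\ref{L: almgren}, $N(r,u)\ge N(0+,u)$, and since $u\not\equiv 0$ and $u(0)=0$, Lemma~\ref{L: lowerbound} gives $N(0+,u)\ge\min\{1,1-a\}$. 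A direct case check (treating $a\le 0$ and $a>0$ separately) shows $n-1+a+2\min\{1,1-a\}=n+1-|a|$, hence $\frac{d}{dr}\log\!\big(H(r)\,r^{-(n+1-|a|)}\big)\ge 0$; that is, $r\mapsto H(r)/r^{n+1-|a|}$ is nondecreasing.

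\textbf{Conclusion.} From $D(r)=N(r,u)H(r)/r$ we obtain
\[
\frac{1}{r^{n-|a|}}\int_{B_r}|x_n|^a|\nabla u|^2 \;=\; \frac{D(r)}{r^{n-|a|}} \;=\; N(r,u)\cdot\frac{H(r)}{r^{n+1-|a|}},
\]
a product of two nonnegative nondecreasing functions of $r$ — the first by Lemma~\ref{L: almgren}, the second by the previous step — hence nondecreasing, as claimed. The one point that genuinely requires care is the reduction to $u(0)=0$: if $u$ does not vanish at the center then $N(0+,u)=0$, the factor $H(r)/r^{n+1-|a|}$ in fact blows up as $r\to 0$ and is not monotone, so the product argument breaks down; subtracting the constant $u(y',0)$, which alters neither $\mathcal{L}_a u$ nor $D(r)$, is exactly what repairs it. The remaining steps — the two elementary identities for $D$ and $H$ and the arithmetic identity $n-1+a+2\min\{1,1-a\}=n+1-|a|$ — are routine.
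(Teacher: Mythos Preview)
Your proof is correct and follows essentially the same route as the paper's: reduce to $u(0)=0$ by subtracting the constant $u(y',0)$, use the identity $H'(r)=\frac{n-1+a}{r}H(r)+2D(r)$ together with Almgren monotonicity and Lemma~\ref{L: lowerbound} to obtain monotonicity of $H(r)/r^{n+1-|a|}$, and then write $D(r)/r^{n-|a|}$ as the product $N(r)\cdot H(r)/r^{n+1-|a|}$. The only cosmetic difference is that the paper first shows monotonicity of $D(r)/r^{n-2+a+2k}$ for $k=N(0+)$ and then invokes $k\ge\min\{1,1-a\}$, whereas you substitute the lower bound on $N(0+)$ at the outset; both amount to the same argument.
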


A few remarks need to be said. First, if we add the additional assumption for even symmetry, namely that that $u(x',x_n) = u(x', -x_n)$, then 
\begin{equation}   \label{E: na}
\frac{1}{r^{n+a}}\int_{B_r(y',0)}{|x_n|^a  |\nabla u|^2} \quad  \text{ is monotone increasing in } r
\end{equation}
The solution $v=\frac{x_n}{|x_n|^a}$ for $a>0$ shows that if there is not even symmetry, then \eqref{E: na} is not true. Likewise, the hypothesis that that the ball be centered on the $\R^{n-1} \times \{0\}$ plane is essential. $v$ as given above with center $(y',y_n)$ with suitably chosen $y_n \neq 0$ will be a counterexample.   \eqref{E: na} is also not true if the ball is not centered on the thin space, and a counterexample is much easier to provide: off the thin space solutions are $C^1$, so if $a>0$ and $y_n \neq 0$ then
\[
\lim_{r \to 0} \frac{1}{r^{n+a}} \int_{B_r(y',y_n)}{|x_n|^a |\nabla u|^2} \to \infty
\] 
and so it is clear that \eqref{E: na} can only be true if $y_n=0$.   

\begin{proof}
Following the notation in Lemma \ref{L: almgren} we have 
\[
H'(r)= \frac{(n-1+a)}{r} H(r) + 2D(r)
\]
This equality comes from \eqref{E: almgrenlemma2} and \eqref{E: der}. This implies that $rH'(r)/H(r) = n-1+a+2N(r)$ is also monotone increasing. Hence $rH'(r)/H(r) \geq n-1+a+2k$ where $k=N(0+)$. Then $r^{-(n-1+a+2k)}H(r)$ is monotone increasing and therefore also
\[
\frac{1}{r^{n-2+a+2k}}D(r) = \frac{1}{r^{n-1+a+2k}}H(r)N(r) 
\]
is monotone increasing in $r$. By subtracting the constant $u(0)$ which is a solution of $\mathcal{L}_a$ we may use Lemma \ref{L: lowerbound} to conclude that $k \geq \min \{1,1-a\}$ and the Lemma is proven.
\end{proof}

\section{Optimal Regularity}    \label{S: optimalregularity}
In studying free boundary problems it becomes useful to utilize the so called ``blow-up'' process. If $u$ is a minimizer of the functional \eqref{E: functional} in $B_1(x_{0}',0)$, then the rescaled function
\begin{equation}   \label{E: rescale}
u_r(x) \overset{\text{def}}{=} \frac{u((x_{0}',0)+rx)}{r^{s}}
\end{equation}
is a minimizer in $B_{1/r}$. Here $s=(1-a)/2$. By taking a sequence $r_k \to 0$ we may hope to find a subsequence $u_{r_k} \to u_0$ where $u_0$ is a minimizer in all compact subsets of $\R^n$. By considering properties of the free boundary of $u_0$ one may gather information on the free boundary of $u$ close to the point $x_0$. Theorem \ref{T: regularity} will guarantee that $u_0$ does exist. 

\begin{theorem}  \label{T: regularity}
Let $u$ be a minimizer in $B_1$. Then $u \in C^{0, s }(U)$ for all $U \Subset B_1$. 
\end{theorem}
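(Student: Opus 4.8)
The plan is to reduce the theorem to the scale‑invariant oscillation estimate
\[
\operatorname{osc}_{B_r(x_0)} u \le C\, r^{s}\qquad\text{for all }x_0\in B_{3/4}',\ \ 0<r\le\tfrac12;
\]
call this $(\star)$, with $C$ depending only on $n,a,\lambda^{\pm}$ and $\|u\|_{L^{2}(a,B_1)}$. First one needs local boundedness: since replacing $u$ on a ball by the truncation $\max(\min(u,M),-M)$ leaves the thin‑space integrand in \eqref{E: functional} unchanged (for any $M>0$ the sets $\{u>0\}$ and $\{u<0\}$ on the thin space are preserved) while strictly lowering $\int|x_n|^a|\nabla u|^2$, the truncations satisfy the usual energy inequalities and De~Giorgi iteration for the $A_2$ weight $|x_n|^a$ (\cite{MR643158}) yields $\|u\|_{L^{\infty}(B_{7/8})}\le C$. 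Also, competitors supported in $\{x_n\ne0\}$ do not affect the thin‑space term, so $u$ minimizes the pure weighted Dirichlet integral there; hence $\mathcal L_a u=0$ in $B_1\setminus(\R^{n-1}\times\{0\})$, and after the dilation $z\mapsto u(x+\tfrac{|x_n|}{2}z)$ the equation becomes uniformly elliptic with smooth coefficients bounded in terms of $a$, giving the interior estimate $\|\nabla u\|_{L^\infty(B_{d/4}(x))}\le C(n,a)d^{-1}\operatorname{osc}_{B_{d/2}(x)}u$, where $d=\operatorname{dist}(x,\{x_n=0\})$.

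Granting $(\star)$, the Hölder bound follows by a two‑case dichotomy. Take $x,y\in B_{1/8}$ and, relabelling, assume $d:=|x_n|\ge|y_n|$. If $|x-y|\ge d/10$, put $x^{*}=(x',0)$, $y^{*}=(y',0)\in B_{3/4}'$; then the balls $B_{|x-x^{*}|}(x^{*})$, $B_{|y^{*}-y|}(y^{*})$, $B_{|x^{*}-y^{*}|}(x^{*})$ all have radius $\lesssim|x-y|\le\tfrac12$, and three applications of $(\star)$ give $|u(x)-u(y)|\le C|x-y|^{s}$. If $|x-y|<d/10$, then $B_{d/2}(x)\subset\{x_n\ne0\}$ and $B_{d/2}(x)\subset B_{3d/2}(x^{*})$, so the interior estimate and $(\star)$ give $\|\nabla u\|_{L^\infty(B_{d/4}(x))}\le Cd^{-1}\operatorname{osc}_{B_{3d/2}(x^{*})}u\le C d^{\,s-1}$, whence $|u(x)-u(y)|\le C d^{\,s-1}|x-y|\le C|x-y|^{s}$ since $|x-y|<d$. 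A covering and scaling argument then yields $u\in C^{0,s}(U)$ for any $U\Subset B_1$.

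It remains to prove $(\star)$, which I would do by contradiction and compactness. If $(\star)$ fails one produces, after a dyadic normalization at a ``worst'' scale around a point of $B_{3/4}'$ (a standard iteration disposes of the a priori possibility that $r^{-s}\operatorname{osc}_{B_r}u$ is unbounded for a single $u$), minimizers $u_k$, points $x_k$, scales $\rho_k\to0$ and constants $\omega_k$ so that $v_k(x):=\omega_k^{-1}\bigl(u_k(x_k+\rho_k x)-u_k(x_k)\bigr)$ satisfies $v_k(0)=0$, $\operatorname{osc}_{B_1}v_k\ge\tfrac12$, and the uniform growth bound $\operatorname{osc}_{B_R}v_k\le R^{s}$ for $1\le R\le R_k\to\infty$. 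Because $2s=1-a$ yields the exponent identity $n+a+2s-2=n-1$, the weighted Dirichlet energy rescales onto that of $v_k$ up to the factor $\omega_k^2$, with $\omega_k=\rho_k^{s}L_k$ and $L_k\to\infty$; hence $v_k$ minimizes a functional $\int|x_n|^a|\nabla v|^2+\varepsilon_k\int_{B_R'}(\lambda^{+}\chi_{\{v>t_k\}}+\lambda^{-}\chi_{\{v<t_k\}})$ with $\varepsilon_k=L_k^{-2}\to0$. The growth bound and $v_k(0)=0$ give uniform $L^\infty$ bounds on compacts, hence uniform interior Hölder estimates with some fixed exponent, and the Caccioppoli inequality for $\mathcal L_a$ used as in the proof of Lemma~\ref{L: lowerbound} gives strong $H^1_{\mathrm{loc}}(a,\R^n)$ compactness; along a subsequence $v_k\to v_0$. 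A cut‑and‑paste competitor argument, using $\varepsilon_k\to0$ and lower semicontinuity of the weighted Dirichlet integral, shows $v_0$ minimizes $\int|x_n|^a|\nabla v|^2$ in every ball, i.e.\ $\mathcal L_a v_0=0$ in $\R^n$. Thus $v_0$ is entire $a$‑harmonic, $v_0(0)=0$, $\operatorname{osc}_{B_1}v_0\ge\tfrac12$ (so $v_0\not\equiv0$, hence by the maximum principle $H(R)>0$ for $R\ge1$), and $\sup_{B_R}|v_0|\le R^{s}$. By Lemma~\ref{L: lowerbound}, $N(0{+},v_0)\ge\min\{1,1-a\}=:\mu$, so by the monotonicity of $N$ (Lemma~\ref{L: almgren}) and the identity $H'(R)/H(R)=(n-1+a+2N(R,v_0))/R$ (proof of Lemma~\ref{L: menergy}) one gets $H(R)\ge H(1)\,R^{\,n-1+a+2\mu}$ for $R\ge1$, while $H(R)=\int_{\partial B_R}|x_n|^a v_0^2\le C R^{\,n-1+a+2s}$. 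Since $\mu>s$ for every $a\in(-1,1)$, letting $R\to\infty$ is absurd, and $(\star)$ is proved.

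I expect the main obstacle to be this last step, and specifically two soft points inside it: first, the bookkeeping that guarantees a genuine worst scale (circumvented by the dyadic iteration/normalization), and second, the verification that the blow‑up limit $v_0$ inherits minimality of the weighted Dirichlet energy — this rests on weak $H^1(a)$ lower semicontinuity, strong convergence near the competing sphere, and the vanishing of the rescaled thin‑space penalty $\varepsilon_k$. The quantitative tools for the second point — the Caccioppoli inequality and cut‑and‑paste comparisons for minimizers — are exactly those already used in the proof of Lemma~\ref{L: lowerbound}, so beyond these soft points the only genuinely new input is the exponent identity $n+a+2s-2=n-1$, which makes the thin‑space penalty subcritical under the $r^{s}$‑normalization, together with the frequency lower bound of Lemma~\ref{L: lowerbound}.
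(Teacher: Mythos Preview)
Your argument is essentially correct, but it takes a genuinely different route from the paper's proof, so a brief comparison is in order.

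The paper argues \emph{directly}: it compares $u$ on each ball $B_r$ centered on the thin space with its $a$-harmonic replacement $v$, uses minimality to bound $\int_{B_r}|x_n|^a|\nabla(u-v)|^2\le Cr^{n-1}$, and then iterates via Lemma~\ref{L: menergy} (energy monotonicity for $a$-harmonic functions) to obtain the Campanato-type bound $\int_{B_r}|x_n|^a|\nabla u|^2\le Cr^{n-1}$. From this the thin $C^{0,s}$ estimate and its extension to the solid ball follow as in your reduction step. Your route is instead a compactness/blow-up argument culminating in a Liouville theorem: a sequence contradicting $(\star)$ produces, after renormalization, an entire $a$-harmonic $v_0$ vanishing at the origin with $|v_0|\le R^{s}$, which is killed by the frequency lower bound of Lemma~\ref{L: lowerbound}. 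It is worth noting that both proofs ultimately rest on the same input, namely Lemma~\ref{L: lowerbound} (which feeds into Lemma~\ref{L: menergy} in the paper's proof and is invoked directly on $v_0$ in yours); the difference is that the paper channels it through an energy decay iteration, giving a constructive bound with explicit constants, while your compactness route is softer, closer in spirit to improvement-of-flatness arguments, and would transfer more readily to settings where a clean harmonic replacement is unavailable.

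Two remarks on your write-up. First, your appeal to ``the Caccioppoli inequality for $\mathcal L_a$ used as in the proof of Lemma~\ref{L: lowerbound}'' is not quite right: the $v_k$ are not $a$-harmonic across the thin space, so you should instead obtain the energy bound directly from quasi-minimality (compare $v_k$ to $(1-\eta)v_k$ and use $|v_k|\le R^{s}$ together with the exponent identity $n+a+2s-2=n-1$), and then upgrade weak to strong $H^1(a)$ convergence by the standard minimality-plus-lower-semicontinuity trick. Second, the ``worst scale'' normalization you allude to is indeed routine (take $r_k$ nearly realizing $M(r)=\sup_{\rho\ge r}\rho^{-s}\operatorname{osc}_{B_\rho}u$ with $M(r_k)\to\infty$), but since this is the hinge of the whole argument it deserves one explicit line rather than a parenthetical.
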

For minimizers of \eqref{E: functional} we follow the method provided in \cite{lC10} for the one phase case. 

\begin{proof}
Throughout the beginning of the proof $C$ will be any constant depending on dimension $n$ and $a$. Let $u$ be a minimizer in $B_2$. For every $0<r<1$ we consider the harmonic replacement $v$ of $u$ in $B_r=B_r(x',0)$.  That is $\mathcal{L}_a v=0$ and $v=u$ on $\partial B_r$.  Since $u$ is a minimizer, $J(u) \leq J(v)$ in $B_r$, so
\[
\int_{B_r}{|x_n|^a |\nabla u|^2} \leq \int_{B_r}{|x_n|^a |\nabla v|^2} + C r^{n-1}
\] 
We now use that $\mathcal{L}_a v =0$, so 
\[
\int_{B_r}{|x_n|^a \langle \nabla v, \nabla (v-u)\rangle} = 0
\]
and this allows us to conclude
\[
\int_{B_r}{|x_n|^a |\nabla (u-v)|^2} \leq C r^{n-1}
\]
If we now choose $\rho < r < 1$
\begin{alignat*}{2}
\int_{B_{\rho}}{|x_n|^a |\nabla u|^2} &= \int_{B_{\rho}}{|x_n|^a |\nabla (u-v+v)|^2} \\
&\leq 2 \left(\int_{B_r}{|x_n|^a |\nabla (u-v)|^2} +  \int_{B_{\rho}}{|x_n|^a |\nabla v|^2}  \right) \\
&\leq C r^{n-1} + 2 \left(\frac{\rho}{r}\right)^{n-|a|} \int_{B_r}{|x_n|^a |\nabla v|^2} \text{ by Lemma \ref{L: menergy}} \\
&\leq C r^{n-1} + C \left(\frac{\rho}{r}\right)^{n-|a|} \int_{B_r}{|x_n|^a |\nabla u|^2}
\end{alignat*}
We now choose $\delta < 1/2$ with
\[
r = \delta^k , \qquad \rho = \delta^{k+1} , \qquad \mu \equiv \delta^{n-1}
\]
to obtain
\begin{equation}   \label{E: iteratebound}
\int_{B_{\delta^{k+1}}}{|x_n|^a |\nabla u|^2} \leq C \mu^{k} + C \mu \delta^{1-|a|} 
\int_{B_{\delta^k}}{|x_n|^a |\nabla u|^2}
\end{equation}
We now may choose $\delta$ such that $C \delta^{1-|a|} < 1$. Using a simple induction argument we conclude
\[
\int_{B_{\delta^k}}{|x_n|^a |\nabla u|^2} \leq \frac{C^2}{1-C\delta^{1-|a|}} \mu^{k-1}
\]
Then for all $r<1/2$ and a different constant which will also depend on the $L^2(a,B_2)$ norm of $\nabla u$
\begin{equation} \label{E: ebound1}
\int_{B_r(x',0)}{|x_n|^a |\nabla u|^2} \leq C r^{n-1}
\end{equation}
and so we may conclude as in \cite{lC10} that
\begin{equation}   \label{E: ebound2}
\int_{B_r(x',0)}{|\nabla u|} \leq Cr^{n-1 + s }
\end{equation}
Since the estimate \eqref{E: ebound2} is only true for balls centered on the thin space we cannot use Morrey's theorem to immediately conclude $C^{0,s}$ regularity for $u$ inside the solid ball $B_{1/2}$. However, one may use the proof of Morrey's theorem (as outlined in \cite{mZ97}) with the estimate \eqref{E: ebound2} to conclude 
\begin{equation}   \label{E: campanato}
|u(x',0)-\overline{u}_B| \leq Cr^{s}
\end{equation}
so that $u$ is $C^{0, s}$ on the thin space $\R^{n-1} \times \{0\}$. Equation \eqref{E: ebound2} and hence also \eqref{E: campanato} will hold for $|u|$. We now aim to conclude that we have the same H\"older growth off the thin space. By optimal H\"older regularity along the thin space, we only need to show H\"older growth in the pure $|x_n|$ direction. For a fixed point $(y',0)$, we consider the rescaled functions 
\[
u_r(x) \equiv \frac{u(y',0) + xr)-u(y',0)}{r^{s}}
\]
which have a universal (unweighted) $L^2$ gradient bound in $B^* = B_{1/2}(0, \ldots ,0,1)$ by \eqref{E: ebound1}. Using estimate \eqref{E: campanato} for $|u_r|$, we may deduce that the average value of $|u_r|$ over $B_{3/2}(0)$ is universally bounded; consequently, the average value of $|u_r|$ over $B^*$ will also be universally bounded. By using the (unweighted) Poincare inequality in $B^*$ we obtain
\[
\|u_r\|_{W^{1,2}(B_{1/2}(0, \ldots ,0,1))} \leq C 
\]
By first variation $\mathcal{L}_a u_r =0$ if $|x_n>0|$. By staying away from the thin space we may use regularity theory for uniformly elliptic equations and conclude that each $u_r$ is continuous in  $B^*$ and we have the weak Harnack inequality 
\[
\| u_r  \|_{L^{\infty}(B_{1/4}(0,\ldots ,0,1))}  \leq C
\] 
This proves the H\"older growth off the thin space. That is,
\begin{equation}  \label{E: hgrowth}
\frac{|u(y',0)-u(x)|}{|(y',0)-x|^{s}} \leq C
\end{equation}
Let now $x,y \in B_1$. If $|y_n| \leq |x-y|$ we may use \eqref{E: hgrowth} to bound
\[
\frac{|u(x)-u(y)|}{|x-y|^{s}}
\]
If $|y_n|>|x-y|$ then we may rescale with 
\[
u_r = \frac{u((x',0) + rx)-u(x',0)}{r^{s}}
\]
and use interior gradient bounds (in $B^*$ as defined before) on uniformly elliptic equations to  
conclude
\[
\frac{|u(x)-u(y)|}{|x-y|^{s}} \leq C
\]
\end{proof}

The H\"older regularity of minimizers allows us to conclude the following about the convergence of sequences of minimizers.
\begin{corollary} \label{C: holderconv}
\label{C: conv}
  Let $\{u_k\}$ be a sequence of minimizers of the functional \eqref{E: functional} in
  the domain $D$ with $\left\| u_k \right\|_{L^{\infty}(\partial D)}
  \leq M$. Then there exists a subsequence and a function $u_0$ such
  that for every open $U \Subset D$
  \begin{alignat*}{2}
    &(1)   &\quad & u_0 \in H^1(a,U) \cap C^{s}(\overline U) \\
    &(2)   && u_k \to u_0 \text{ in } C^{\beta}(\overline U) \text{ for } \beta < s\\
    &(3)   && u_k \rightharpoonup u_0 \text{ in } H^{1}(a,U)\\
  \end{alignat*}
\end{corollary}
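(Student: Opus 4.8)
The plan is to deduce Corollary~\ref{C: conv} from the optimal regularity Theorem~\ref{T: regularity} together with the Rellich--Kondrachov compactness statement (Proposition~\ref{P: compact}) and a standard variational stability argument. First I would fix an open set $U \Subset D$ and pick an intermediate open set $V$ with $U \Subset V \Subset D$. Because each $u_k$ is a minimizer in $D$ with $\|u_k\|_{L^\infty(\partial D)} \le M$, the maximum principle for $\mathcal{L}_a$ (quoted in Section~\ref{S: weights}, using that $\lambda^\pm \ge 0$ only enters the bulk via the competitor $v = $ the $a$-harmonic replacement and that $|u_k|$ satisfies a subsolution-type comparison, or more directly by comparison of $J(u_k)$ with $J(\min(u_k, M))$ and $J(\max(u_k,-M))$) gives $\|u_k\|_{L^\infty(D)} \le M$. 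Comparing $J(u_k)$ on a ball $B_r(x_0) \Subset D$ with the $a$-harmonic replacement as in the proof of Theorem~\ref{T: regularity} yields a uniform (in $k$) bound $\int_V |x_n|^a |\nabla u_k|^2 \le C(M, \operatorname{dist}(V,\partial D), n, a)$; together with the $L^\infty$ bound this shows $\{u_k\}$ is bounded in $H^1(a,V)$.

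Next I would invoke two compactness mechanisms in parallel. From the uniform $H^1(a,V)$ bound and reflexivity of $H^1(a,V)$ (the weight is $A_2$, so this is a Hilbert space), a subsequence satisfies $u_k \rightharpoonup u_0$ weakly in $H^1(a,V)$, giving (3); by Proposition~\ref{P: compact} applied on $V$ (after multiplying by a cutoff to land in $H^1_0(a,V')$, or directly via the local compact embedding) we may also pass to a further subsequence with $u_k \to u_0$ in $L^q(a,V)$ for some $q<2\varkappa$ and $u_k \to u_0$ a.e. Simultaneously, Theorem~\ref{T: regularity} gives a uniform $C^{0,s}(\overline U)$ bound on the $u_k$ for every $U \Subset D$ (the constant there depends only on $n$, $a$, $M$, and the distance to $\partial D$, as the proof makes explicit); by Arzel\`a--Ascoli a subsequence converges in $C^\beta(\overline U)$ for every $\beta < s$ to a limit which must coincide with $u_0$ a.e., hence equals $u_0$. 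This delivers (2), and the uniform $C^{0,s}$ estimate passes to the limit to give $u_0 \in C^s(\overline U)$; the weak $H^1(a,U)$ limit gives $u_0 \in H^1(a,U)$, completing (1). A diagonal argument over an exhausting sequence $U_j \Subset D$ produces a single subsequence working for all $U \Subset D$.

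The statement as written does not actually assert that $u_0$ is itself a minimizer, so strictly speaking the $\Gamma$-convergence / lower-semicontinuity argument for minimality is not needed here; but since it is the natural companion fact and is used later, I would record it: for any competitor $w$ with $w - u_0 \in H^1_0(a,U)$, one builds competitors $w_k = u_k + \eta(w - u_0) + (\text{corrector})$ for the minimizers $u_k$, uses $J(u_k) \le J(w_k)$, and passes to the limit, the bulk term being weakly lower semicontinuous and the thin characteristic-function term being handled by the a.e.\ convergence on the thin space together with the fact that $\{u_0 > 0\}'$ and $\{u_0 < 0\}'$ are open in the thin space by continuity (one gets $\liminf$ control in the right direction).

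The main obstacle is the thin free-boundary term $\int_{U'} \lambda^+ \chi_{\{u>0\}} + \lambda^- \chi_{\{u<0\}}\, d\H^{n-1}$: characteristic functions are not continuous under $L^q$ convergence, so one cannot simply pass the functional to the limit. The resolution — which is only needed for the optional minimality claim, not for (1)--(3) — is that uniform $C^{0,s}$ convergence on $\overline{U}$ forces $\chi_{\{u_k > 0\}} \to \chi_{\{u_0>0\}}$ a.e.\ on the open set $\{u_0 \ne 0\}'$ and one-sided ($\limsup$/$\liminf$) inequalities on $\{u_0 = 0\}'$, which is exactly the direction needed for $J(u_0) \le \liminf J(u_k)$; making this rigorous requires the nondegeneracy results developed later to control the measure of $\{u_0 = 0\}' \cap \partial\{u_0>0\}'$, so in this corollary I would only claim (1)--(3) and postpone minimality of blow-ups to Section~\ref{S: nondegeneracy}.
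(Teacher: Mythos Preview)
Your argument is correct and follows essentially the same route as the paper: the uniform $C^{0,s}$ estimate from Theorem~\ref{T: regularity} together with Arzel\`a--Ascoli gives (1) and (2), and the uniform weighted energy bound (the paper simply cites \eqref{E: ebound1} and \eqref{E: asobolev} rather than rederiving them) yields boundedness in $H^1(a,U)$ and hence the weak compactness in (3). The additional material you include---the explicit $L^\infty$ bound via comparison, the invocation of Proposition~\ref{P: compact}, and the discussion of minimality of the limit---is correct but not required for (1)--(3), as you rightly note.
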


\begin{proof}
Properties (1) and (2) follow immediately from the H\"older-regularity proven in Theorem \ref{T: regularity}. Property (3) follows from the inequalities \eqref{E: ebound1} and \eqref{E: asobolev}.
\end{proof}

Since minimizers are continuous, we may use the first-variation to conclude
\begin{proposition}   \label{P: solution}
Let $u$ be a minimizer of \eqref{E: functional} in $\Omega$. Then
\[
 \mathcal{L}_a u=0 \quad \text{in  } \Omega \setminus \Lambda(u)
\]
\end{proposition}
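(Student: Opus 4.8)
\emph{Proof proposal.} The statement is local, so the plan is to fix an arbitrary $x_0 \in \Omega \setminus \Lambda(u)$, produce a ball $B = B_r(x_0) \Subset \Omega$ adapted to $x_0$, and show that $\int_{\Omega} |x_n|^a \langle \nabla u, \nabla \varphi \rangle \, dx = 0$ for every test function $\varphi \in C_c^\infty(B)$. Since every $\varphi \in C_c^\infty(\Omega \setminus \Lambda(u))$ is a finite sum of such ball-supported pieces (partition of unity), this yields $\mathcal{L}_a u = 0$ weakly on $\Omega \setminus \Lambda(u)$. The whole point of excluding $\Lambda(u)$ is that near such an $x_0$ the obstacle term $\int_{\Omega'}(\lambda^+ \chi_{\{u>0\}} + \lambda^- \chi_{\{u<0\}})\, d\H^{n-1}$ in \eqref{E: functional} is insensitive to small perturbations of $u$, so only the weighted Dirichlet term enters the first variation.

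To choose $B$, I would split into two cases. If $(x_0)_n \neq 0$, pick $r$ with $B_r(x_0) \Subset \Omega$ and $B_r(x_0) \cap (\R^{n-1} \times \{0\}) = \emptyset$; then $J(u+t\varphi)$ and $J(u)$ differ only by the solid integral over $B_r(x_0)$, with no obstacle contribution whatsoever. If $(x_0)_n = 0$, then $x_0 \notin \Lambda(u)$ means $u(x_0) \neq 0$, and by the continuity of minimizers (Theorem \ref{T: regularity}) there are $r,c > 0$ with $B_r(x_0) \Subset \Omega$ and $|u| \geq c$ on $B_r(x_0)$; since $B_r(x_0)$ is connected, $u$ keeps a fixed sign there. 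Hence for $\varphi \in C_c^\infty(B_r(x_0))$ with $\varphi \not\equiv 0$ and $|t| < c/\|\varphi\|_{L^\infty}$, the function $u + t\varphi$ has the same sign as $u$ on $\mathrm{supp}\,\varphi$ and coincides with $u$ off $\mathrm{supp}\,\varphi$, so $\{u+t\varphi>0\} = \{u>0\}$ and $\{u+t\varphi<0\} = \{u<0\}$ as sets; in particular the obstacle term of \eqref{E: functional} is unchanged.

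In both cases $u + t\varphi$ matches the boundary data of $u$, hence is an admissible competitor, so minimality gives, for all small $|t|$,
\[
0 \leq J(u + t\varphi) - J(u) = 2t \int_{B_r(x_0)} |x_n|^a \langle \nabla u, \nabla \varphi \rangle \, dx + t^2 \int_{B_r(x_0)} |x_n|^a |\nabla \varphi|^2 \, dx .
\]
Dividing by $t$ and letting $t \to 0^+$ and $t \to 0^-$ forces $\int |x_n|^a \langle \nabla u, \nabla \varphi \rangle \, dx = 0$. All the integrals here are finite: $u \in H^1(a,\Omega)$, $\varphi$ is smooth with compact support, and $|x_n|^a$ is locally integrable (it is an $A_2$ weight), so the expansion of $|\nabla(u+t\varphi)|^2$ in the weighted space is legitimate.

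The one genuinely delicate point is the bookkeeping in the second case — checking that a small $C^\infty$ perturbation does not move the sets $\{u>0\}$ and $\{u<0\}$ near $x_0$ — which rests squarely on continuity of $u$ together with $x_0 \notin \Lambda(u)$. Everything else is the standard interior first-variation argument, and no information about the free boundary itself is needed.
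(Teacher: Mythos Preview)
Your argument is correct and is exactly the approach the paper has in mind: the paper records only the single sentence ``Since minimizers are continuous, we may use the first-variation to conclude'' before stating the proposition, and your proof is a careful unpacking of that sentence. Your case split according to whether $(x_0)_n=0$, and the use of Theorem~\ref{T: regularity} to freeze the obstacle term near a thin point with $u(x_0)\neq 0$, are precisely the points the paper is alluding to.
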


From Proposition \ref{P: solution} one expects the following
\begin{proposition}   \label{P: byparts}
Let $u$ be a minimizer in $\Omega$. For any ball $B \Subset \Omega$
\[
\int_{B}{|x_n|^a |\nabla u|^2} = \int_{\partial B}{|x_n|^a uu_{\nu}}
\]
\end{proposition}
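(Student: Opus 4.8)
The plan is to justify the integration-by-parts identity
\[
\int_{B}{|x_n|^a |\nabla u|^2} = \int_{\partial B}{|x_n|^a u u_{\nu}}
\]
by approximating $B$ from inside by a one-parameter family of smaller balls on which the boundary trace is nice, applying the divergence theorem in the weighted sense to the vector field $|x_n|^a u \nabla u$, and then passing to the limit. The subtlety, as always with the weight $|x_n|^a$, is that the weight degenerates ($a>0$) or blows up ($a<0$) on the thin space $\R^{n-1} \times \{0\}$, which meets $\partial B$; so the computation must be done carefully enough that the flux contribution from a neighborhood of the thin space is controlled.

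First I would note that by Proposition \ref{P: solution} we have $\mathcal{L}_a u = 0$ in $\Omega \setminus \Lambda(u)$, so on the open set $\{u \neq 0\} \cup (\Omega \setminus (\R^{n-1}\times\{0\}))$ the field $|x_n|^a u \nabla u$ is divergence-free in the distributional sense: $\operatorname{div}(|x_n|^a u \nabla u) = |x_n|^a |\nabla u|^2 + u\,\mathcal{L}_a u = |x_n|^a|\nabla u|^2$ away from $\Lambda(u)$, while on $\Lambda(u)$ both $u$ and (by continuity and the optimal regularity of Theorem \ref{T: regularity}, together with $u=0$ there) the product $u\nabla u$ vanish in a way compatible with the computation. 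The key point is that $|x_n|^{a/2}\nabla u \in L^2_{\mathrm{loc}}$ and $u$ is bounded and $C^{0,s}$, so $|x_n|^a u \nabla u \in L^1_{\mathrm{loc}}$ and $|x_n|^a|\nabla u|^2 \in L^1_{\mathrm{loc}}$; thus both sides of the claimed identity are finite.

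Next I would regularize. Choose a smooth cutoff $\eta_\varepsilon$ equal to $1$ outside the slab $\{|x_n| < \varepsilon\}$ and vanishing on $\{|x_n| < \varepsilon/2\}$, with $|\nabla \eta_\varepsilon| \le C/\varepsilon$, and a cutoff $\zeta$ in the radial variable supported in $B$; integrate $\operatorname{div}(|x_n|^a u\nabla u)$ against these, i.e.\ test the equation $\mathcal{L}_a u = 0$ (valid away from $\Lambda(u)$, in particular on the support of $\eta_\varepsilon$ once we are off the thin space) with $u\eta_\varepsilon^2 \zeta$. The error term from $\nabla \eta_\varepsilon$ is $O(\varepsilon^{-1})\int_{\{\varepsilon/2<|x_n|<\varepsilon\}} |x_n|^a |u||\nabla u|$; since $|x_n|^{a/2}\nabla u\in L^2$ and $|x_n|^{a/2} \le C\varepsilon^{a/2}$ on that strip (for $a>0$; for $a<0$ use instead $|x_n|^a \le C\varepsilon^{a}$ and the $L^2$ bound together with the thickness $\varepsilon$ of the strip — one checks $\varepsilon^{-1}\cdot \varepsilon^{a/2}\cdot(\varepsilon\cdot\varepsilon^{a})^{1/2} \to 0$ using $a > -1$), this error tends to $0$ as $\varepsilon \to 0$. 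Then I would let $\zeta \nearrow \chi_B$ through balls $B_\rho \nearrow B$, using the trace of $u$ and $|x_n|^{a/2}\nabla u$ on $\partial B_\rho$ for a.e.\ $\rho$ (via the coarea formula applied to the $L^1$ functions in question) to identify the boundary term as $\int_{\partial B_\rho}|x_n|^a u u_\nu$, and pass to the limit $\rho \to r$ by dominated convergence, since $|x_n|^a u u_\nu$ is integrable on a full-measure set of spheres foliating a neighborhood of $\partial B$.

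The main obstacle I expect is precisely the interaction of the cutoff near the thin space with the coincidence set $\Lambda(u)$ and the degeneracy of the weight: one must be sure that the flux through the part of $\partial B$ lying near $\R^{n-1}\times\{0\}$ does not contribute a spurious term and that testing with $u\eta_\varepsilon^2\zeta$ is legitimate despite $\mathcal{L}_a u$ only being zero off $\Lambda(u)$. This is handled by the observation that $u$ vanishes continuously on $\Lambda(u)$ with the $C^{0,s}$-modulus, so $u\eta_\varepsilon^2\zeta$ is an admissible test function in $H^1_0(a,\cdot)$ supported where the equation holds, and the weighted-strip estimate above, valid for all $-1<a<1$, kills the boundary layer. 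Everything else is a routine application of the weighted divergence theorem together with the integrability facts from Theorem \ref{T: regularity} and the definition of $H^1(a,D)$.
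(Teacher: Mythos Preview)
Your error estimate for the $|x_n|$-cutoff does not hold. The product you write down, $\varepsilon^{-1}\cdot\varepsilon^{a/2}\cdot(\varepsilon\cdot\varepsilon^{a})^{1/2}=\varepsilon^{a-1/2}$, tends to zero only when $a>1/2$, not for all $a\in(-1,1)$; the condition $a>-1$ is irrelevant here. More structurally, cutting off in $|x_n|$ throws away the one piece of information that makes the identity work: $u$ vanishes on $\Lambda(u)$ with the optimal rate $|u|\le C\,d(x,\Lambda(u))^{s}$. On the portion of the thin space where $u(x',0)\neq 0$ your cutoff is active but $|u|$ is bounded away from zero, so the cross term $\varepsilon^{-1}\!\int_{\{\varepsilon/2<|x_n|<\varepsilon\}}|x_n|^{a}|u|\,|\nabla u|$ is of order $\varepsilon^{(a-1)/2}$ times a quantity with no quantitative rate, and there is no reason for it to vanish. (Indeed, on that part of the thin space $\mathcal L_a u=0$ already holds across $\{x_n=0\}$, so cutting off there is both unnecessary and harmful.)

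The paper's argument fixes exactly this: the cutoff $\eta_k$ is taken in the distance $d_x=\operatorname{dist}(x,\Lambda(u))$, so that on its transition layer one has $|u|\le C d_x^{s}\le C k^{-s}$. This gives only a \emph{uniform} bound on $\int_B|x_n|^a|\nabla(\eta_k u)|^2$ (the error term is $O(1)$, not $o(1)$), and the proof closes not by showing the commutator tends to zero but by extracting a weak $H^1(a)$ limit of $\eta_k u$, identifying it as $u$ via pointwise convergence and Proposition~\ref{P: compact}, and then passing to the limit in
\[
\int_B |x_n|^a\langle\nabla(\eta_k u),\nabla u\rangle=\int_{\partial B}|x_n|^a(\eta_k u)u_\nu,
\]
where the left side is a pairing with the fixed element $|x_n|^{a/2}\nabla u\in L^2$. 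If you want to salvage your outline, replace the slab cutoff by a cutoff in $d_x$ and aim for boundedness plus weak convergence rather than a vanishing error.
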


\begin{remark}
Proposition \ref{P: byparts} holds for more general domains than a ball; however, the assumption that the domain is a ball will suffice for our purposes. 
\end{remark}

\begin{proof}
We define the following sequence of cutoff functions
\[
\eta_k(x) = 
   \begin{cases}
      0 , &\text{ if } d_x \leq 1/k \\
      kd_x - 1, &\text{ if } 1/k \leq d_x \leq 2/k  \\
      1  ,  &\text{ otherwise }
   \end{cases}
\]
Where $d_x = \text{dist}(x,\Lambda(u))$. Then $|\nabla \eta_k|=k$ when $1/k \leq d_x \leq 2/k$ and zero otherwise. We now use optimal regularity of $u$ to establish that the sequence $\eta_k u$ is bounded in $H^{1}(a,B)$. 
\begin{alignat*}{2}
\int_{B}{|x_n|^a |\nabla (\eta_k u)|^2} 
&\leq \int_{B}{2|x_n|^a \left( \eta_k^2|\nabla u|^2 + u^2 |\nabla \eta_k|^2 \right)} \\
&\leq \int_{B}{2|x_n|^a |\nabla u|^2} 
+ \int_{B \cap \{d_x \leq 2/k \}}{8|x_n|^a Ck^{a-1} k^2} \\
&\leq \int_{B}{2|x_n|^a |\nabla u|^2} 
+ \int_{B \cap \{|x_n| \leq 2/k \}}{8|x_n|^a Ck^{1+a} } \\
&\leq \int_{B}{2|x_n|^a |\nabla u|^2} + C \quad \text{ for some new constant } C
\end{alignat*}
Then there exists $v$ such that $\eta_k u \rightharpoonup v$ in $H^{1}(a,B)$ and $\eta_k u \to v$ pointwise by Proposition \ref{P: compact}. Since $\eta_k u \to u$ pointwise, then $v = u$. Now using the divergence theorem and that $\mathcal{L}_{a}u =0$ away from the coincidence set $\Lambda(u)$ we obtain
\[
\int_{B}{|x_n|^a \langle \nabla (\eta_k u) , \nabla u \rangle} = \int_{\partial B}{|x_n|^a \eta_k u u_\nu}
\] 
Then let $k \to \infty$ to obtain the result.   
\end{proof}

\section{Nondegeneracy and Weiss Monotonicity} \label{S: nondegeneracy}
When we have a blow-up sequence $u_r \to u_0$, it is not immediately obvious if $u_0$ could be degenerate, that is $u_0 \equiv 0$. If $u_0 \equiv 0$, then we would be unable to gather any information on the free boundary of $u$ near $x_0$. Theorem \ref{T: nondegeneracy} will guarantee that $u_0$ will not be degenerate. 

\begin{theorem}[Nondegeneracy] \label{T: nondegeneracy}
Fix $t > 0$, and let $u$ be a minimizer of $J$. There exists $\epsilon > 0$ with $\epsilon$ depending only on $\{\lambda^+, \lambda^-, t\}$ such that if $u|_{\partial B_r} \leq  \epsilon r^{s} $ $(u|_{\partial B_r} \geq  -\epsilon r^{s})$ then 
\[
u(x) \leq 0 \quad (u(x) \geq 0) \qquad \text{for } x \in B_{tr}'
\]
\end{theorem}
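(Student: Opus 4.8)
## Proof Proposal

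The plan is to prove the contrapositive of a suitable quantitative statement: if $u$ is a minimizer with small boundary data $u|_{\partial B_r}\le \epsilon r^s$ but $u$ takes a positive value somewhere on $B_{tr}'$, then we can construct a competitor that strictly lowers the energy, contradicting minimality. By scaling we may take $r=1$; the function $u_r(x) = u(rx)/r^s$ is again a minimizer (as noted in Section \ref{S: optimalregularity}), so it suffices to treat $r=1$ and track how $\epsilon$ and $t$ enter.

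The key steps, in order, are as follows. First, assume $u|_{\partial B_1}\le \epsilon$ and that $u>0$ at some point of $B_t'$; by optimal regularity (Theorem \ref{T: regularity}) and continuity, $\{u>0\}\cap B_t'$ is a nonempty relatively open subset of the thin space, so $\lambda^+\,\mathcal H^{n-1}(\{u>0\}\cap B_t')>0$. Second, I would compare $u$ with the competitor $v = \min(u,\varphi)$, where $\varphi$ is a fixed nonnegative $\mathcal{L}_a$-subsolution-type barrier (or, more simply, the $a$-harmonic function in $B_1$ with boundary data $\epsilon$, which is just the constant $\epsilon$), truncated so that $v=u$ on $\partial B_1$ and $v\le 0$ on $B_t'$. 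Concretely one takes $v=\min(u, w)$ where $w$ is $a$-harmonic in $B_1\setminus B_t'$ with $w=\epsilon$ on $\partial B_1$ and $w\le 0$ on $B_t'$; such $w$ exists because the thin set $B_t'$ has positive $a$-capacity for $-1<a<1$, and one gets the bound $w\le \epsilon$ everywhere with $w$ controlled below $0$ on a definite thin neighborhood. Third, I would estimate the energy change: replacing $u$ by $v$ can only decrease the Dirichlet part by an amount that is $O(\epsilon^2)$ (using $\int|x_n|^a|\nabla u|^2 \le C$ on $\partial B_{1}$-type bounds from \eqref{E: ebound1} and the Dirichlet-energy-minimizing property of $w$), while it decreases the positivity penalty by at least $\lambda^+\,\mathcal H^{n-1}(\{u>0\}\cap B_t')>0$, and the negativity penalty can increase by at most $\lambda^-\,\mathcal H^{n-1}(\{v<0\le u\}\cap B_1')$, which is also $O(\epsilon^2)$ or can be absorbed by choosing the barrier $w$ so that it is only negative exactly where $u\le 0$ already fails. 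Balancing these, for $\epsilon$ small depending only on $\lambda^\pm$ and $t$, the net change $J(v)-J(u)<0$, contradiction.

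The main obstacle is the third step: getting the Dirichlet-energy comparison to be genuinely of lower order than the gain $\lambda^+\mathcal H^{n-1}(\{u>0\}\cap B_t')$ in the penalty term. The difficulty is that a priori we only know $u|_{\partial B_1}\le \epsilon$ from above but nothing from below, so $u$ could be very negative inside, and the truncation $v=\min(u,w)$ changes $u$ on the whole region $\{u>w\}$, not just near the thin space. The right fix is to work instead with the even reflection or to use the nondegeneracy-style argument from \cite{mA11} and \cite{lC10} adapted to the weight: one shows that if $\sup_{B_t'} u > 0$ then in fact $\sup_{\partial B_\rho} u \gtrsim \rho^s$ for $\rho$ in a range, via a sub-mean-value property for $\mathcal{L}_a$-subsolutions, so the hypothesis $u|_{\partial B_1}\le\epsilon$ with $\epsilon$ tiny is already contradicted. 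This reduces everything to (i) the $a$-harmonic comparison/barrier on the thin set, using the $A_2$-weight theory and Harnack inequality from Section \ref{S: weights}, and (ii) a clean bookkeeping of the two penalty terms, where the key point is that crossing from $\{u>0\}$ into $\{u\le 0\}$ on the thin space strictly pays off because $\lambda^+,\lambda^->0$ are fixed while the Dirichlet cost scales like the capacity of the modified region times $\epsilon^2$.
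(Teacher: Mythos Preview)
Your proposal correctly identifies its own fatal gap and does not close it. The competitor $v=\min(u,w)$ gains only $\lambda^+\,\mathcal H^{n-1}(\{u>0\}\cap B_t')$ in the penalty term, and this quantity carries no a priori positive lower bound: nothing in the hypotheses prevents $u$ from being positive on a thin set of arbitrarily small $\mathcal H^{n-1}$-measure, so forcing $J(v)<J(u)$ would make $\epsilon$ depend on $u$ rather than only on $\lambda^\pm$ and $t$. Your proposed fix---that $\sup_{B_t'}u>0$ implies $\sup_{\partial B_\rho}u\gtrsim\rho^s$ ``via a sub-mean-value property for $\mathcal L_a$-subsolutions''---is precisely the nondegeneracy statement you are trying to prove, so invoking it is circular. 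Even if $u^+$ were an $\mathcal L_a$-subsolution (not clear in the two-phase setting), the sub-mean-value inequality only yields $\sup_{\partial B_\rho}u\ge u(x_0)$, with no universal control on $u(x_0)$.

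The paper's proof follows a completely different route that avoids this obstacle. First, a \emph{lattice principle} (if $u\le v$ on $\partial D$ and both are minimizers, then $\max(u,v)$ and $\min(u,v)$ are again minimizers) reduces the question to the maximal minimizer $u_\epsilon^*$ with \emph{constant} boundary data $u_\epsilon^*|_{\partial B_1}\equiv\epsilon$: any minimizer with $u|_{\partial B_1}\le\epsilon$ lies below $u_\epsilon^*$. Second, \emph{Steiner symmetrization} in the thin variables shows that a minimizer with constant boundary data is rotationally symmetric in $x'$ and that its coincidence set is a centered thin ball $\Lambda(u_\epsilon^*)=\overline B_\rho'$. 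Third, comparison with the explicit competitor that vanishes on $B_{1-\sqrt\epsilon}$ and is linear in $|x|$ outside gives $J(u_\epsilon^*)\to 0$, hence $\mathcal H^{n-1}(\{u_\epsilon^*>0\}\cap B_1')\to 0$. Since that positivity set is exactly the annulus $B_1'\setminus \overline B_\rho'$, one obtains $\rho\to 1$ and therefore $u_\epsilon^*\equiv 0$ on $B_t'$ once $\epsilon$ is small depending only on $t$ and $\lambda^+$. The symmetrization step is the missing idea in your approach: it converts a soft measure bound on $\{u>0\}'$ into the hard geometric statement that the zero set is a centered ball, which is what allows one to locate it relative to $B_t'$.
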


The proof of Theorem \ref{T: nondegeneracy} is included in the appendix and only requires slight modifications from the proof presented in \cite{mA11}.

\begin{corollary} \label{C: 1/2 growth}
If $u$ is a minimizer and $0 \in \Gamma^+$ $(0 \in \Gamma^-)$, then 
\begin{equation}   \label{E: 1/2 growth}
\sup_{\partial B_r}u \geq C r^{s} \qquad \left( \inf_{\partial B_r}u \leq -C r^{s} \right)
\end{equation}
Where $C$ depends only on $\lambda^+, \lambda^-$ and $n$.
\end{corollary}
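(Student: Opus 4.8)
The plan is to derive the lower bound \eqref{E: 1/2 growth} as the contrapositive of the Nondegeneracy Theorem \ref{T: nondegeneracy}. Suppose $0 \in \Gamma^+$, so that there are points $x' \in B_\rho'$ with $u(x',0) > 0$ for every $\rho > 0$. I would argue by contradiction: assume that for some sequence $r_k \to 0$ (or, after rescaling, for a fixed $r$ with a small constant) we have $\sup_{\partial B_{r}} u < \epsilon r^{s}$, where $\epsilon$ is the constant from Theorem \ref{T: nondegeneracy} applied with a convenient choice of $t$, say $t = 1/2$. Then Theorem \ref{T: nondegeneracy} forces $u \le 0$ on $B_{tr}' = B_{r/2}'$. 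But $0$ lies on $\Gamma^+ = \partial\{u(\cdot,0)>0\}$, so every neighborhood of $0$ in the thin space contains a point where $u(\cdot,0)>0$; in particular $B_{r/2}'$ does, contradicting $u\le 0$ there. Hence $\sup_{\partial B_r} u \ge \epsilon r^{s}$ for all sufficiently small $r$, which is \eqref{E: 1/2 growth} with $C = \epsilon$. The case $0 \in \Gamma^-$ is identical after replacing $u$ by $-u$ (the functional is symmetric under this exchange once $\lambda^+$ and $\lambda^-$ are swapped, which does not affect the statement).

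To make this clean I would first fix the scale. Given $r$ small enough that $B_{2r} \Subset B_1$ (the domain on which $u$ is a minimizer), apply Theorem \ref{T: nondegeneracy} directly on $B_r$ with $t=1/2$: the constant $\epsilon$ it produces depends only on $\lambda^+,\lambda^-$ and the fixed value $t=1/2$, hence only on $\lambda^+, \lambda^-$ (and $n$, which enters through $s$ and the minimization problem), as claimed. The only genuine content beyond invoking Theorem \ref{T: nondegeneracy} is the observation that a free boundary point of $\Gamma^+$ cannot be an interior point of $\{u(\cdot,0)\le 0\}$ in the thin-space topology — this is immediate from the definition $\Gamma^+ = \partial\{u(\cdot,0)>0\}$, since the boundary point has positivity points of $u$ arbitrarily close.

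The main (and only) obstacle is bookkeeping rather than mathematics: one must check that Theorem \ref{T: nondegeneracy} applies on the ball $B_r$ itself — not merely on $B_1$ — which requires $u$ to be a minimizer in a slightly larger ball, and this holds because minimality is a local property (the rescaling \eqref{E: rescale} maps a minimizer in $B_1(x_0',0)$ to a minimizer in $B_{1/r}$, so in particular $u$ restricted to any $B_{2r} \Subset B_1$ is a minimizer there). One should also note that the hypothesis of Theorem \ref{T: nondegeneracy} is phrased as $u|_{\partial B_r} \le \epsilon r^{s}$, so the contrapositive yields $\sup_{\partial B_r} u > \epsilon r^{s}$ strictly; writing $\ge C r^{s}$ with $C \le \epsilon$ absorbs this harmlessly.
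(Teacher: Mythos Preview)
Your argument is correct and is exactly the intended one: the paper states Corollary~\ref{C: 1/2 growth} without proof because it is the immediate contrapositive of Theorem~\ref{T: nondegeneracy}, applied with a fixed $t$ (such as $t=1/2$) so that $C=\epsilon$ depends only on $\lambda^\pm$ and $n$. The bookkeeping you flag (that $u$ is a minimizer on $B_r$ for small $r$, and that a point of $\Gamma^+$ has thin-space positivity points arbitrarily nearby) is precisely what is needed and nothing more.
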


In \cite{gW98} G. Weiss introduced a monotonicity formula for a free boundary problem that allowed one to conclude that blow-ups were homogeneous. Theorem \ref{T: weiss} gives a modified Weiss-type monotonicity formula that allows us to conclude Corollary \ref{C: monotone} namely, that all blow-ups are homogeneous of degree $s=(1-a)/2$. Corollary \ref{C: monotone} is crucial in proving Theorem I. 

\begin{theorem} \label{T: weiss}
Let $B_r = B_r(x_0,0)$. Define $W(r,u,x_0) = $
\begin{equation} \label{E: weiss}
\frac{1}{r^{n-1}} \left( \int_{B_r}{|x_n|^a|\nabla u|^2} 
+ \int_{B_r'}{\lambda^+ \chi_{ \{u>0\} } + \lambda^- \chi_{ \{u<0\} } } \right)
-\frac{s}{r^n} \int_{\partial B_r}{|x_n|^a u^2} 
\end{equation}
$W(r,u,x_0)$ is finite and monotone increasing in $r$. Furthermore, if $r_1 < r_2$, then  $W(r_1, u) = W(r_2, u)$ if and only if $u$ is homogeneous of degree $s=(1-a)/2$ on the ring $r_1 < |x| < r_2$. 
\end{theorem}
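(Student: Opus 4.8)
The plan is to differentiate $W(r,u,x_0)$ in $r$ and show the derivative is nonnegative, with equality forcing homogeneity. I would first reduce to $x_0 = 0$ by translation, and introduce the standard Almgren-type notation $D(r) = \int_{B_r}|x_n|^a|\nabla u|^2$ and $H(r) = \int_{\partial B_r}|x_n|^a u^2$, together with the surface energy term $I(r) = \int_{B_r'}(\lambda^+\chi_{\{u>0\}} + \lambda^-\chi_{\{u<0\}})$. Finiteness of $W$ follows immediately from the optimal regularity estimate \eqref{E: ebound1} (which gives $D(r) \le Cr^{n-1}$), the boundedness of the measure $|x_n|^a\,d\H^{n-1}$ on $B_r'$, and the $C^{0,s}$ bound \eqref{E: campanato} combined with optimal growth off the thin space \eqref{E: hgrowth} (which gives $H(r) \le Cr^{n-1+2s}$, since $n-1+a = n-1+2s-\ldots$ — more precisely the weight contributes $r^{n-1+a}$ and $u^2$ contributes $r^{2s}$, and $a + 2s = 1$, so $H(r) \le Cr^{n+a}$, making the last term $O(r^{a})$ which is integrable for $a>-1$).

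The core computation is the $r$-derivative. Writing $W(r) = r^{-(n-1)}(D(r)+I(r)) - s\,r^{-n}H(r)$, I would compute $D'(r) = \int_{\partial B_r}|x_n|^a|\nabla u|^2$, $I'(r) = \frac{d}{dr}I(r)$ (a nonnegative term, since $I$ is monotone in $r$; this is where the sign of $\lambda^\pm$ matters), and use the identity $H'(r) = \frac{n-1+a}{r}H(r) + 2\int_{\partial B_r}|x_n|^a u u_\nu$ from Section~\ref{S: weights}. The key input from minimality is the domain-variation (Pohozaev/Rellich) identity: testing the functional against the rescaling $u_t(x) = u(x(1+t))$ (or equivalently the vector field $x\cdot\nabla u$) and using that $u$ is a minimizer yields an identity relating $D(r)$, $I(r)$, the boundary term $\int_{\partial B_r}|x_n|^a(u_\nu)^2 r$, and $\int_{\partial B_r}|x_n|^a|\nabla u|^2$. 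Concretely, since $|x_n|^a$ is homogeneous of degree $a$ and the thin-space term scales like $\H^{n-1}$, the Euler–Lagrange/domain-variation identity reads roughly
\[
(n-1+a)D(r) + (n-1)I(r) = r\int_{\partial B_r}|x_n|^a|\nabla u|^2 - 2r\int_{\partial B_r}|x_n|^a(u_\nu)^2 + r I'(r)
\]
up to the precise constants, plus using Proposition~\ref{P: byparts} to write $D(r) = \int_{\partial B_r}|x_n|^a u u_\nu$. Assembling these, and using the Cauchy–Schwarz inequality
\[
\left(\int_{\partial B_r}|x_n|^a u u_\nu\right)^2 \le \left(\int_{\partial B_r}|x_n|^a u^2\right)\left(\int_{\partial B_r}|x_n|^a (u_\nu)^2\right),
\]
one obtains $W'(r) \ge \frac{2}{r^{n}}\int_{\partial B_r}|x_n|^a\left(u_\nu - \frac{s}{r}u\right)^2 + (\text{nonnegative surface-energy terms}) \ge 0$.

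For the equality case: $W'(r) \equiv 0$ on $(r_1,r_2)$ forces both the Cauchy–Schwarz term to vanish — i.e. $u_\nu = \frac{s}{r}u$ pointwise on $\partial B_r$ for a.e.\ $r\in(r_1,r_2)$, which is exactly the ODE characterizing homogeneity of degree $s$ — and the surface-energy contribution $rI'(r) - (n-1)I(r)$ or its analogue to vanish, which is automatically consistent once $\{u>0\}$ and $\{u<0\}$ are cones. Conversely if $u$ is homogeneous of degree $s$ on the ring, all the inequalities used are equalities, so $W$ is constant there. The main obstacle I anticipate is \textbf{making the domain-variation identity rigorous}: $u$ is only $C^{0,s}$ and $\mathcal{L}_a u = 0$ merely off the coincidence set $\Lambda(u)$, so the integration by parts against $x\cdot\nabla u$ must be justified by the cutoff argument of Proposition~\ref{P: byparts} (using $\eta_k$ supported away from $\Lambda(u)$, bounding $u^2|\nabla\eta_k|^2|x_n|^a$ via optimal regularity as there), and one must check the boundary terms on $\partial B_r$ are well-defined for a.e.\ $r$ — this is where the bulk of the technical care goes, and where the excerpt notes only "slight modifications" from \cite{mA11} are needed. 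The scaling bookkeeping with the weight exponent $a$ (absent in \cite{mA11}, where $a=0$) is the second place to be careful, but it is routine once the structure is set up.
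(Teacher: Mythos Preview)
Your approach is essentially the paper's: domain variation against radial rescalings yields the Pohozaev-type identity \eqref{E: almgrenmin}, which combined with Proposition~\ref{P: byparts} and the formula for $H'(r)$ gives $W'(r)$ as a perfect square. Two small corrections worth flagging. First, Cauchy--Schwarz plays no role: the computation produces \emph{exactly}
\[
W'(r) = \frac{1}{r^{n-1}}\int_{\partial B_r}|x_n|^a\left(\frac{(1-a)u}{\sqrt{2}\,r} - \sqrt{2}\,u_\nu\right)^2,
\]
already a nonnegative integrand, and there are no leftover ``nonnegative surface-energy terms'' --- the thin term scales with exponent exactly $n-1$, so $(n-1)I(r) - rI'(r)$ becomes precisely $[r^{-(n-1)}I(r)]'$ with nothing to spare. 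Second, the technical obstacle is not quite where you locate it: the domain variation itself never integrates by parts against $x\cdot\nabla u$ and never sees $\Lambda(u)$, because one compares $J(u_\epsilon)$ to $J(u)$ directly for the competitor $u_\epsilon(\tau_\epsilon(x))=u(x)$ (the map $\tau_\epsilon$ preserves the thin space, so $u_\epsilon$ is admissible). The $\Lambda(u)$-cutoff of Proposition~\ref{P: byparts} is invoked exactly once, to rewrite $D(r)=\int_{\partial B_r}|x_n|^a u u_\nu$.
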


\begin{remark}
If $u_r(x) = \frac{u(rx)}{r^{s}}$, then  $W(r,u) = W(1,u_r)$.
\end{remark}

The proof of Theorem \ref{T: weiss} requires only slight modifications from the proof presented in \cite{mA11} for the case in which $a=0$ and therefore the proof is contained in the appendix.

\begin{corollary}  \label{C: monotone}
Let $u_r \to u_0$ a blow-up at $(x_0,0)$. Then $u_0$ is homogeneous of degree $s$
\end{corollary}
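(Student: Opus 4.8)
The plan is to combine the Weiss monotonicity formula (Theorem \ref{T: weiss}) with the rescaling invariance noted in the remark following it, and then use the convergence of the blow-up sequence to pass to the limit. First I would record that, by definition of the blow-up, $u_r$ here denotes $u_{r_k}(x) = u((x_0,0)+r_k x)/r_k^s$ for some sequence $r_k \to 0$, and that by Corollary \ref{C: holderconv} we may assume (after passing to a further subsequence) that $u_{r_k} \to u_0$ in $C^\beta_{\mathrm{loc}}$ for $\beta < s$ and weakly in $H^1(a,\cdot)$ on every compact set. The key identity is $W(r_k, u, x_0) = W(1, u_{r_k})$, so that as $k \to \infty$ the quantity $W(r_k,u,x_0)$ converges to $W(0+, u, x_0)$, which exists and is finite because $r \mapsto W(r,u,x_0)$ is monotone increasing and bounded below (Theorem \ref{T: weiss}).

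Next I would fix any $0 < \rho_1 < \rho_2$ and show $W(\rho_1, u_0) = W(\rho_2, u_0)$. For this I need $W(\rho, u_{r_k}) \to W(\rho, u_0)$ for fixed $\rho$. The gradient term $\int_{B_\rho}|x_n|^a|\nabla u_{r_k}|^2$ passes to the limit by the strong $H^1(a,B_\rho)$ convergence (which one gets from the Caccioppoli inequality for $a$-harmonic functions away from the coincidence set, exactly as in the proof of Theorem \ref{T: regularity} and Proposition \ref{P: byparts}, together with weak lower semicontinuity to handle the thin-set contribution); the boundary term $\int_{\partial B_\rho}|x_n|^a u_{r_k}^2$ passes to the limit by the uniform $C^\beta$ convergence; and the thin-space measure term requires the usual semicontinuity argument — $\liminf \int_{B_\rho'} (\lambda^+\chi_{\{u_{r_k}>0\}} + \lambda^-\chi_{\{u_{r_k}<0\}}) \geq \int_{B_\rho'}(\lambda^+\chi_{\{u_0>0\}}+\lambda^-\chi_{\{u_0<0\}})$ from pointwise a.e. convergence and Fatou, while the reverse inequality needs a competitor/nondegeneracy argument ruling out the escape of mass, using that $u_0$ itself is a minimizer. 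Combining these, $W(\rho,u_0) = \lim_k W(\rho,u_{r_k})$; but $W(\rho, u_{r_k}) = W(\rho r_k, u, x_0) \to W(0+,u,x_0)$ as well, since $\rho r_k \to 0$. Hence $W(\rho, u_0) = W(0+,u,x_0)$ is independent of $\rho$, so $W(\rho_1, u_0) = W(\rho_2, u_0)$ for all $0 < \rho_1 < \rho_2$, and the equality case in Theorem \ref{T: weiss} forces $u_0$ to be homogeneous of degree $s$ on every ring $\rho_1 < |x| < \rho_2$, hence on all of $\R^n \setminus \{0\}$, and by continuity on $\R^n$.

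The main obstacle I anticipate is the convergence of the functional values $W(\rho, u_{r_k}) \to W(\rho, u_0)$ — specifically the two delicate points: (i) upgrading weak $H^1(a)$ convergence to strong convergence of the Dirichlet energy on $B_\rho$, and (ii) the upper semicontinuity of the thin-space penalization term $\int_{B_\rho'}(\lambda^+\chi_{\{u_{r_k}>0\}}+\lambda^-\chi_{\{u_{r_k}<0\}})$, i.e. showing no penalization mass is lost in the limit. Both are standard in the free-boundary literature and in \cite{mA11}, but they are where the real work lies; the monotonicity formula and the scaling identity then do the rest essentially for free.
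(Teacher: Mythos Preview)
Your overall strategy---show $W(\rho,u_{r_k})\to W(\rho,u_0)$ for each fixed $\rho$, then use the scaling identity $W(\rho,u_{r_k})=W(\rho r_k,u,x_0)\to W(0+,u,x_0)$ to conclude $W(\cdot,u_0)$ is constant---is sound in principle, and is the route taken in some of the literature. However, it is \emph{not} the route the paper takes, and the difference is worth noting because the paper's argument sidesteps precisely the two obstacles you flag.

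The paper never attempts to pass to the limit in the individual terms of $W(\rho,u_{r_k})$. Instead it uses the explicit derivative formula obtained in the proof of Theorem~\ref{T: weiss}, namely
\[
W(2r/3,u)-W(r/3,u)=\int_{r/3}^{2r/3}\frac{1}{\rho^{n-1}}\int_{\partial B_\rho}|x_n|^a\Bigl(\tfrac{(1-a)u}{\sqrt{2}\rho}-\sqrt{2}\,u_\nu\Bigr)^{2}\,d\rho,
\]
and rescales this to an integral of the same ``homogeneity defect'' quadratic over the annulus $B_{2/3}\setminus B_{1/3}$ in the $u_r$ variables. Since $W$ is monotone and bounded below, the left-hand side tends to zero. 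The right-hand side is a convex quadratic in $\nabla u_r$ with a zeroth-order part in $u_r$; hence weak $H^1(a)$ convergence together with strong $L^2(a)$ convergence (both supplied directly by Corollary~\ref{C: holderconv}) already give weak lower semicontinuity, forcing the defect to vanish for $u_0$ on the annulus, i.e.\ $u_0$ is $s$-homogeneous there, and hence everywhere by scaling.

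What this buys: the paper never needs strong $H^1(a)$ convergence of the Dirichlet energy, never needs upper semicontinuity of the thin penalization term, and never needs to know in advance that $u_0$ is itself a minimizer---which is exactly the ingredient you invoke for point (ii) and which is not established anywhere in the paper prior to this corollary. Your approach can be made to work, but it requires an auxiliary ``limits of minimizers are minimizers, with convergence of energies'' lemma that the paper simply does not need.
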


\begin{proof}
By \eqref{E: ebound1} and optimal regularity it is easy to verify that $W(0+,u,x_0)$ is bounded from below, so that 
\[
W(2r/3,u,x_0)-W(r/3,u,x_0) \to 0 \text{  as } r \to 0
\]
From the explicit representation of $W'$ provided in the proof of Theorem \ref{T: weiss} we may write
\begin{alignat*}{2}
W(2r/3)-W(r/3) 
	& = \int_{r/3}^{2r/3}{\frac{1}{\rho^{n-1}} 
	\int_{\partial B_{\rho}}{|x_n|^a \left(\frac{(1-a)u}{\sqrt{2}\rho} - \sqrt{2} u_{\nu} \right)^2} \ d\rho} \\
	& = \int_{1/3}^{2/3}{\frac{1}{(rt)^{n-1}} 
	\int_{\partial B_{rt}}{|x_n|^a \left(\frac{(1-a)u}{\sqrt{2}rt} - \sqrt{2} u_{\nu} \right)^2} r \ dt} \\
	& = \int_{1/3}^{2/3}{\frac{1}{t^{n-1}} 
	\int_{\partial B_{t}}{(rx_n)^a \left(\frac{(1-a)u(rx)}{\sqrt{2}rt} - \sqrt{2} \nabla u(rx)\cdot \nu  \right)^2} r \ dt} \\
	& = \int_{1/3}^{2/3}{\frac{1}{t^{n-1}} 
	\int_{\partial B_{t}}{|x_n|^a \left(\frac{(1-a)u_r}{\sqrt{2}t} - \sqrt{2} \nabla u_r\cdot \nu \right)^2}    \ dt}  \\
	& \geq   \int_{1/3}^{2/3}{ 
	\int_{\partial B_{t}}{|x_n|^a \left(\frac{(1-a)u_r}{\sqrt{2}t} - \sqrt{2} \nabla u_r\cdot \nu \right)^2}    \ dt}  \\
	& = \int_{B_{2/3} \setminus B_{1/3}}{ |x_n|^a\left(\frac{u_r}{\sqrt{2}t} - \sqrt{2} \nabla u_r\cdot \nu \right)^2} dx
\end{alignat*}
Now we use that $u_r \rightharpoonup u_0$ in $H^1(a, B_1)$ and $u_r \to u_0$ in $L^2(a,B_1)$ by Corollary \ref{C: holderconv}, so $u_0$ is homogeneous of degree $s$. 
\end{proof}

\section{Courant-Fischer Maximum-Minimum Principle}      \label{S: courant}
We may decompose the operator $\mathcal{L}_a$ into its radial and spherical parts similar to the case for the Laplacian. If $u \in H^1(a,S^{n-1})$, then $\mathcal{L}_a^{\theta} u = f$ is to be interpreted as
\[
-\int_{S^{n-1}}{|x_n|^a \langle \nabla_{\theta} u, \nabla_{\theta} v \rangle} = 
\int_{S^{n-1}}{|x_n|^a fv} 
\quad \text{for all } v \in H^1(a, S^{n-1}) 
\]
Corollary \ref{C: monotone} shows that all blow-ups are homogeneous. Homogeneous solutions of $\mathcal{L}_a u =0$ correspond to eigenfunctions on the sphere. Specifically, if $\mathcal{L}_a u=0$ and $u=r^{\alpha}f(\theta)$, then
\[
-\mathcal{L}_a^{\theta} f = \lambda f
\]
where $\lambda = \alpha(\alpha +n-2+a)$. We also have the converse.

\begin{lemma}  \label{L: spheresolution}
Suppose $-\mathcal{L}_{a}^{\theta} f = \lambda f$. If $u=r^{\alpha}f$ with $\lambda=\alpha(\alpha+n-2+a)>0$, $\alpha>0$, then $u$ is a weak solution to $\mathcal{L}_a u =0$ in $\R^n$.
\end{lemma}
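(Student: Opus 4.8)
The plan is to verify the weak formulation of $\mathcal{L}_a u = 0$ directly, exploiting the separation of variables $u = r^\alpha f(\theta)$ together with the eigenvalue relation $-\mathcal{L}_a^\theta f = \lambda f$, $\lambda = \alpha(\alpha + n - 2 + a)$. The first concern is integrability: I must check that $u \in H^1_{\mathrm{loc}}(a, \R^n)$ so that the weak formulation even makes sense. Writing $|\nabla u|^2 = \alpha^2 r^{2\alpha - 2} f^2 + r^{2\alpha - 2} |\nabla_\theta f|^2$ and integrating against $|x_n|^a = r^a |\theta_n|^a$ over a ball $B_R$, the radial integral is $\int_0^R r^{2\alpha - 2 + a + n - 1}\, dr$, which converges near $0$ precisely when $2\alpha + a + n - 2 > 0$, i.e. $\alpha(2) > -(a + n - 2)$; since $\alpha > 0$ and $a > -1$, $n \geq 1$ (in fact $n \geq 2$ for the problem to be interesting), this holds, so $|x_n|^{a/2}\nabla u \in L^2(B_R)$. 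One also needs the angular integrals $\int_{S^{n-1}} |\theta_n|^a f^2$ and $\int_{S^{n-1}} |\theta_n|^a |\nabla_\theta f|^2$ to be finite, which is exactly the statement that $f \in H^1(a, S^{n-1})$ — and this is built into the hypothesis that $f$ is a weak eigenfunction of $\mathcal{L}_a^\theta$.

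Next I would test against an arbitrary $\varphi \in C_c^\infty(\R^n)$ (density then extends it to $H_0^1(a, B_R)$). Using polar coordinates, I would write
\[
\int_{\R^n} |x_n|^a \langle \nabla u, \nabla \varphi\rangle\, dx = \int_0^\infty \int_{S^{n-1}} r^{a} |\theta_n|^a \Big( \partial_r u\, \partial_r \varphi + \frac{1}{r^2}\langle \nabla_\theta u, \nabla_\theta \varphi\rangle \Big) r^{n-1}\, d\theta\, dr,
\]
substitute $u = r^\alpha f$, $\partial_r u = \alpha r^{\alpha - 1} f$, $\nabla_\theta u = r^\alpha \nabla_\theta f$, and split into a radial piece and an angular piece. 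For the radial piece I would integrate by parts in $r$ (the boundary terms at $r = 0$ and $r = \infty$ vanish by the integrability just established and the compact support of $\varphi$), producing a factor $-\alpha(\alpha + n - 2 + a) r^{\alpha + a + n - 3}$ hitting $\varphi$. For the angular piece I would apply the weak eigenfunction identity $-\int_{S^{n-1}} |\theta_n|^a \langle \nabla_\theta f, \nabla_\theta v\rangle = \int_{S^{n-1}} |\theta_n|^a \lambda f v$ with $v = \varphi(r, \cdot)$ at each fixed $r$, getting $-\lambda \int_{S^{n-1}} |\theta_n|^a f \varphi$. Adding the two pieces, the coefficients $\alpha(\alpha + n - 2 + a)$ and $\lambda$ cancel by the defining relation, so the whole integral is zero, which is exactly $\mathcal{L}_a u = 0$ weakly.

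The main obstacle I anticipate is the rigorous justification of the radial integration by parts and of applying the angular eigenvalue identity "slicewise in $r$." For the first, I would cut off near the origin with a radial cutoff $\eta_\epsilon$ vanishing on $B_\epsilon$ and equal to $1$ outside $B_{2\epsilon}$, bound the error terms by $C\epsilon^{-1} \int_{B_{2\epsilon}\setminus B_\epsilon} |x_n|^a |\nabla u|\, |\varphi|$ and a similar symmetric term, and check these tend to $0$ using the integrability exponent $2\alpha + a + n - 2 > 0$ — this is precisely the step where the hypotheses $\alpha > 0$ and $\lambda > 0$ are used (note $\lambda > 0$ combined with $\alpha > 0$ forces $\alpha + n - 2 + a > 0$, which controls the decay). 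For the second, I would note that for $\varphi \in C_c^\infty$ the map $r \mapsto \varphi(r, \cdot) \in H^1(a, S^{n-1})$ is smooth and compactly supported, so Fubini and the scalar-valued weak eigenfunction identity combine cleanly; a short approximation argument handles the general test function. Everything else is bookkeeping in polar coordinates.
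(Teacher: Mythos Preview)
Your approach is essentially the same as the paper's: both pass to polar coordinates, apply the spherical eigenvalue identity slicewise, and use the positivity of the exponent $\alpha+n-2+a$ (from $\alpha>0$, $\lambda>0$) to kill the contribution at the origin. The paper packages the radial and angular pieces together as the exact derivative $\frac{d}{dr}\bigl(r^{\alpha+n-2+a}\,v(r\theta)\bigr)$ and integrates directly, whereas you integrate the radial term by parts and cancel against the angular term---these are the same computation, and the only thing to fix in your write-up is the sign in the displayed eigenvalue identity (it should read $\int_{S^{n-1}}|\theta_n|^a\langle\nabla_\theta f,\nabla_\theta v\rangle=\lambda\int_{S^{n-1}}|\theta_n|^a f v$, so the angular piece contributes $+\lambda$, which then cancels the $-\alpha(\alpha+n-2+a)$ from the radial integration by parts).
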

\begin{proof}
Let $v \in H_{0}^1(a,B_R)$. Then
\begin{alignat*}{2}
\int_{B_R}{|x_n|^a \langle \nabla u , \nabla v \rangle} 
&=\int_{0}^{R} \int_{\partial B_{r}}{|x_n|^a \left( \frac{\langle \nabla_{\theta} u , \nabla_{\theta} v \rangle}{r^2} + u_{\nu} v_{\nu} \right) }  \\
&=\int_{0}^{R} \int_{\partial B_{r}}{|x_n|^a \left( r^{\alpha -2}\lambda fv + \alpha r^{\alpha -1} f v_{\nu} \right)}   \\
&=\int_{\partial B_1}{\cos^a(\theta_{n-1}) \alpha f} \left(\int_{0}^{R}{\frac{d}{dr}\left(r^{\alpha + n-2+a}v(r \theta) \right)dr}  \right) d\sigma
\end{alignat*}
Now
\[
\int_{0}^{R}{\frac{d}{dr}\left(r^{\alpha + n-2+a}v(r \theta) \right)dr} =0
\]
for a.e. $\theta$ since $v \in H_{0}^{1}(a, B_R)$, and thus the lemma is proven.
\end{proof}

To utilize the Courant-Fischer maximum-minimum principle we will need the following lemma.
\begin{lemma}
Let $\Omega \subset S^{n-1}$ be open. The spectrum of 
\[
-\mathcal{L}_{a}^{\theta} u: H^1(a, \Omega) \subset L^2(a,\Omega) \hookrightarrow L^2(a,\Omega)
\]
is a nonnegative sequence that is either finite or increases to infinity. 
\end{lemma}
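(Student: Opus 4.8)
The plan is to realize $-\mathcal{L}_a^\theta$ as a self-adjoint operator associated to a closed, densely defined, nonnegative quadratic form, and then invoke the spectral theorem for operators with compact resolvent. The key structural point is that the weight $|x_n|^a$ restricted to $S^{n-1}$ is, in the $\theta$-variables, comparable to $\cos^a(\theta_{n-1})$ and is still a Muckenhoupt-type / $p$-admissible weight on the sphere, so the same Sobolev and Rellich--Kondrachov machinery used in Section~\ref{S: weights} carries over with $S^{n-1}$ (or $\Omega$) in place of a Euclidean ball.

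First I would set up the bilinear form $\mathcal{B}(u,v) = \int_\Omega |x_n|^a \langle \nabla_\theta u, \nabla_\theta v\rangle \, d\sigma$ on $H_0^1(a,\Omega)$, which is symmetric, and the form $\mathcal{B}(u,v) + C\int_\Omega |x_n|^a u v$ is coercive on $H_0^1(a,\Omega)$ for suitable $C \ge 0$ (in fact $C=0$ works when $\Omega \subsetneq S^{n-1}$ by a Poincar\'e inequality, and $C>0$ handles $\Omega = S^{n-1}$). By the Lax--Milgram theorem the solution operator $T = (-\mathcal{L}_a^\theta + C)^{-1}\colon L^2(a,\Omega) \to H_0^1(a,\Omega)$ is well-defined, bounded, self-adjoint and positive. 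Next I would show $T$ is \emph{compact} as an operator on $L^2(a,\Omega)$: this follows by composing $T$ with the compact embedding $H^1(a,\Omega) \hookrightarrow L^2(a,\Omega)$, which is the spherical analogue of Proposition~\ref{P: compact} and holds because $|x_n|^a$ is a $p$-admissible weight on the sphere. Once $T$ is compact, self-adjoint and positive, the spectral theorem gives an orthonormal basis of $L^2(a,\Omega)$ consisting of eigenfunctions of $T$ with positive eigenvalues $\mu_k \to 0$; translating back, $-\mathcal{L}_a^\theta$ has eigenvalues $\lambda_k = 1/\mu_k - C$, a nonnegative sequence (nonnegativity because $\mathcal{B}(u,u)\ge 0$) that is finite if $\Omega$ carries only finitely many, or else increases to $+\infty$.

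I expect the main obstacle to be verifying the two weighted functional-analytic inputs on the sphere rather than in a Euclidean ball: namely (i) the weighted Poincar\'e inequality $\int_\Omega |x_n|^a u^2 \le c\int_\Omega |x_n|^a |\nabla_\theta u|^2$ for $u \in H_0^1(a,\Omega)$ with $\Omega$ a proper open subset, and (ii) the compactness of $H_0^1(a,\Omega)\hookrightarrow L^2(a,\Omega)$. Both should follow from the fact that $|x_n|^a\,d\sigma$ is a doubling, $p$-admissible weight on the Riemannian manifold $S^{n-1}$ — one can either quote the intrinsic theory of $p$-admissible weights on metric measure spaces (as in the references \cite{MR1207810, MR1455468} already cited) or reduce to the Euclidean statements by a partition of unity and local coordinate charts near the equator $\{x_n=0\}$, where the weight degenerates but retains its $A_2$ character. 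With those in hand the rest is the standard Hilbert-space spectral argument, so I would state the lemma's proof by (a) citing the spherical Sobolev/Poincar\'e and Rellich results, (b) constructing $T$ via Lax--Milgram, (c) noting $T$ compact self-adjoint positive, and (d) concluding via the spectral theorem that the spectrum of $-\mathcal{L}_a^\theta$ is a nonnegative sequence that is finite or tends to infinity.
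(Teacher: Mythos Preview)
Your proposal is correct and follows essentially the same route as the paper: build the solution operator $T=(-\mathcal{L}_a^\theta + C)^{-1}$ via Riesz/Lax--Milgram, show it is compact and self-adjoint on $L^2(a,\Omega)$, and read off the spectrum from the spectral theorem for compact operators. The only noteworthy difference is in how the compact embedding $H_0^1(a,\Omega)\hookrightarrow L^2(a,\Omega)$ on the sphere is justified: rather than invoking intrinsic $p$-admissible weight theory on $S^{n-1}$ or a partition-of-unity reduction in charts, the paper uses a cleaner trick---extend $u\in H^1(a,\Omega)$ radially as $\tilde u(x)=\eta(|x|)\,u(x/|x|)$ for a bump function $\eta$, so that a bounded sequence in $H^1(a,\Omega)$ becomes a bounded sequence in $H_0^1(a,B_2\setminus B_{1/2})$, to which Proposition~\ref{P: compact} (the Euclidean weighted Rellich--Kondrachov already established) applies directly; convergence of $\tilde u_k$ in $L^2(a,B_2\setminus B_{1/2})$ then immediately gives convergence of $u_k$ in $L^2(a,\Omega)$. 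This sidesteps having to verify the spherical Poincar\'e and compactness statements from scratch.
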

\begin{proof}
From the Reisz representation theorem, for every $f \in L^2(a,\Omega)$ there exists a unique $u \in H^1(a, \Omega)$ such that for all $v \in H^1(a, \Omega)$ the following identity holds
\[  
\int_{\Omega}{|x_n|^a \left(\langle \nabla u, \nabla v \rangle + uv \right)} = \int_{\Omega}{|x_n|^a fv }
\]
We now aim to conclude that the operator $K: L^2(a,\Omega) \hookrightarrow L^2(a,\Omega)$ given by $K(f)=u$ is compact. 
To obtain a compactness theorem on $S^{n-1}$, for any $u \in H^1(a, \Omega)$ we extend $u$ radially by defining $\tilde{u} = \eta(r) u$ for $\eta$ a bump function on $\R$. Then for a sequence $u_k \in H^1(a, \Omega)$ we obtain a bounded sequence $\tilde{u}_{k} \in H_0^1(a, B_2 \setminus B_{1/2})$ and by Proposition \ref{P: compact} we obtain that for a subsequence $\tilde{u}_{k} \to \tilde{u} \in L^2(a,B_2 \setminus B_{1/2})$ and pointwise almost everywhere. Since $\tilde{u} = \eta(r) u$ for some $u \in H^1(a, \Omega)$ we conclude that $u_k \to u$ in $L^2(a,\Omega)$. We may therefore conclude that $K$ is compact. 

Now $-\mathcal{L}_a^{\theta} u = \lambda u$ in $\Omega$ if and only if $K(u)= \frac{1}{\lambda + 1} u$. From the theory of self-adjoint nonnegative compact operators we know that the spectrum of $K$ is either finite or a nonnegative sequence decreasing to zero. Then we obtain that the spectrum of $-\mathcal{L}_a^{\theta}$ is either finite or a nonnegative sequence increasing to infinity.
\end{proof}

If $\Omega = S^{n-1}$ then the first eigenvalue $\lambda_1 =0$. If $\Omega$ is a proper subset of $S^{n-1}$ such that $\Omega^c$ has positive capacity, then the first eigenvalue $\lambda_1 > 0$ and corresponds to the principle eigenfunction that is nonnegative.  
Let $\Omega \subset S^{n-1}$ be open and define $W=H_0^1(a,\Omega)$. 
To compare the eigenvalues of $V=H^1(a,S^{n-1})$ to those of the subspace $W=H_0^1(a,\Omega)$ we employ the Courant-Fischer maximum-minimum principle.
\begin{proposition}[Courant-Fischer]
Let $\Omega \subset S^{n-1}$ be open. The k-th eigenvalue of $-\mathcal{L}_a^{\theta} u$ associated to the domain $\Omega$ is determined by
\[
\lambda_k = \max_{S \in \Sigma_{k-1}} 
\min_{\overset{ v \in S^{\perp}}{\left\| v \right\|_{\mathcal{L}_a^2}=1}} 
\int_{S^{n-1}}{|x_n|^a|\nabla v|^2} 
\]
\end{proposition}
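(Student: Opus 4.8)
The plan is to prove the Courant--Fischer characterization in essentially the classical way, transplanted to the weighted Hilbert space setting established in the previous lemma. The key point is that we already know (from the preceding lemma) that $K = (-\mathcal{L}_a^\theta + I)^{-1}$ is a compact, self-adjoint, nonnegative operator on $L^2(a,\Omega)$, and that $-\mathcal{L}_a^\theta$ therefore has a discrete spectrum $0 \le \lambda_1 \le \lambda_2 \le \cdots$ with an $L^2(a,\Omega)$-orthonormal system of eigenfunctions $\{\phi_j\}$. These eigenfunctions also form an orthogonal system with respect to the Dirichlet form $Q(v) := \int_{S^{n-1}} |x_n|^a |\nabla v|^2$, since $Q(\phi_i,\phi_j) = \lambda_j \int |x_n|^a \phi_i \phi_j$. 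So the whole argument reduces to the standard finite-dimensional Rayleigh--Ritz lemma applied to the Dirichlet quotient $Q(v)/\|v\|_{\mathcal{L}_a^2}^2$.

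The steps, in order, would be: (1) record that for $v \in H^1(a,\Omega)$ with expansion $v = \sum_j c_j \phi_j$ (convergent in $L^2(a,\Omega)$, with $\|v\|^2 = \sum c_j^2$), one has $Q(v) = \sum_j \lambda_j c_j^2$ — here one must be slightly careful that $v \in H^1$ guarantees $\sum \lambda_j c_j^2 < \infty$ and that the Dirichlet form is continuous along the partial sums, which follows from the variational definition of $\mathcal{L}_a^\theta$ and density of finite linear combinations of eigenfunctions in $H^1(a,\Omega)$. (2) For the lower bound $\lambda_k \ge \max_{S} \min_{v \perp S, \|v\|=1} Q(v)$: given any subspace $S$ of dimension $k-1$, the space $\mathrm{span}\{\phi_1,\dots,\phi_k\}$ has dimension $k$, so it meets $S^\perp$ in a nonzero vector $w = \sum_{j=1}^k c_j \phi_j$; then $Q(w) = \sum_{j\le k}\lambda_j c_j^2 \le \lambda_k \|w\|^2$, giving $\min_{v\perp S,\|v\|=1} Q(v) \le \lambda_k$, hence the max over $S$ is $\le \lambda_k$. (3) For the matching upper bound, exhibit the optimal $S$: take $S = \mathrm{span}\{\phi_1,\dots,\phi_{k-1}\}$. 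Then any unit $v \in S^\perp$ has $c_1 = \cdots = c_{k-1} = 0$, so $Q(v) = \sum_{j \ge k} \lambda_j c_j^2 \ge \lambda_k \sum_{j\ge k} c_j^2 = \lambda_k$, and equality is attained at $v = \phi_k$. Combining (2) and (3) gives the stated formula. (One should also note that when the spectrum is finite, say of cardinality $m < k$, the statement is understood with the convention $\lambda_k = +\infty$, and the argument in step (3) degenerates appropriately since $S^\perp \cap H^1(a,\Omega)$ is then trivial — or one simply restricts attention to $k \le m$; I would add a sentence clarifying this.)

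I would present steps (2) and (3) as a single short paragraph, citing the preceding lemma for the spectral decomposition and the Rellich--Kondrachov-type compactness (Proposition \ref{P: compact}) only implicitly through it. The main obstacle — really the only nonroutine point — is the spectral-expansion bookkeeping in step (1): verifying that the eigenfunctions of $-\mathcal{L}_a^\theta$ are simultaneously orthogonal for $L^2(a,\Omega)$ and for the weighted Dirichlet form, that their finite linear combinations are dense in $H^1(a,\Omega)$ (not merely in $L^2(a,\Omega)$), and hence that $Q(v) = \sum \lambda_j c_j^2$ with the series absolutely convergent precisely when $v \in H^1(a,\Omega)$. This is where the $A_2$-weight structure and the variational (weak) definition of $\mathcal{L}_a^\theta$ given at the start of the section do the real work; once it is in place, the max-min identity is a purely algebraic consequence exactly as in the unweighted case.
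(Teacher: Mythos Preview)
Your argument is the standard and correct proof of the Courant--Fischer max-min principle in a weighted Hilbert space setting, and nothing in it is specific to the weight $|x_n|^a$ beyond what the preceding spectral lemma already supplies. The paper, however, does not prove this proposition at all: immediately after the statement there is only a remark citing the classical source \cite{MR0065391}. So you have supplied considerably more than the paper does; your write-up would serve as a self-contained replacement for that citation, with the only delicate point being exactly the one you flag in step~(1) (density of eigenfunction spans in $H_0^1(a,\Omega)$, which follows from compactness of $K$ and the form identity $Q(\phi_i,\phi_j)=\lambda_j\langle\phi_i,\phi_j\rangle_{L^2(a)}$).
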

where $\Sigma_{k-1}$ is the collection of all $k-1$ dimesnional subspaces of $H_0^1(a, \Omega)$.
\begin{remark}
This principle is proven in \cite{MR0065391}.
\end{remark}

From this principle we conclude 
\begin{proposition}   \label{P: eigencomparison}
If $0 = \lambda_1 < \lambda_2 \leq  \ldots$ are the eigenvalues of $V$ and $\gamma_1 < \gamma_2 \leq \ldots$ are the eigenvalues of $W$, then 
\[
\lambda_k \leq \gamma_k \text{ for all } k
\] 
\end{proposition}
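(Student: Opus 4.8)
The plan is to deduce $\lambda_k \le \gamma_k$ from the Courant--Fischer principle together with the elementary fact that $W = H_0^1(a,\Omega)$ embeds into $V = H^1(a,S^{n-1})$ by extension by zero. First I would record this embedding: if $v \in H_0^1(a,\Omega)$ and we set $v \equiv 0$ on $S^{n-1}\setminus\Omega$, then $v \in H^1(a,S^{n-1})$ --- this is precisely the content of membership in $H_0^1(a,\Omega)$, which is the closure in the $H^1(a)$-norm of functions supported compactly in $\Omega$ --- and moreover
\[
\int_{S^{n-1}}{|x_n|^a |\nabla v|^2} = \int_{\Omega}{|x_n|^a |\nabla v|^2}, \qquad \int_{S^{n-1}}{|x_n|^a v^2} = \int_{\Omega}{|x_n|^a v^2}.
\]
Hence the Rayleigh quotient $R(v) = \dfrac{\int_{S^{n-1}} |x_n|^a |\nabla v|^2}{\int_{S^{n-1}} |x_n|^a v^2}$ is unchanged under this identification, and the induced map $\iota \colon W \hookrightarrow V$ is linear and injective, so it carries each $k$-dimensional subspace of $W$ to a $k$-dimensional subspace of $V$ on which $R$ takes the same values.

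Next I would apply the Courant--Fischer principle stated above, in its equivalent min--max form (cf.\ \cite{MR0065391}):
\[
\lambda_k = \min_{\substack{T \subset V \\ \dim T = k}}\ \max_{v \in T \setminus \{0\}} R(v), \qquad \gamma_k = \min_{\substack{S \subset W \\ \dim S = k}}\ \max_{v \in S \setminus \{0\}} R(v),
\]
valid here because both operators have discrete (finite or increasing-to-infinity) spectrum by the spectral lemma above; for $V$ this is the instance $\Omega = S^{n-1}$, where $H_0^1(a,S^{n-1})=H^1(a,S^{n-1})$. By the first paragraph, every competitor $S$ for $\gamma_k$ produces, via $\iota$, a competitor $T = \iota(S)$ for $\lambda_k$ with $\max_{v \in T \setminus \{0\}} R(v) = \max_{v \in S \setminus \{0\}} R(v)$. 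Therefore the set of numbers over which one minimises to obtain $\lambda_k$ contains the set of numbers over which one minimises to obtain $\gamma_k$; taking infima gives $\lambda_k \le \gamma_k$ for every $k$, which is the assertion. The strict inequalities $\lambda_1 < \lambda_2$ and $\gamma_1 < \gamma_2$ recorded in the statement are not needed here; they express simplicity of the bottom eigenvalue, a consequence of the one-sidedness of the principal eigenfunction noted just above the Proposition.

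There is no serious obstacle in this argument. The two points that deserve a little care are: reconciling the max--min form of Courant--Fischer as stated with the min--max form that makes domain monotonicity transparent; and verifying that the zero-extension embedding $W \hookrightarrow V$ behaves as claimed for these degenerate-weighted Sobolev spaces --- in particular that no pathology occurs along $\{x_n = 0\} \cap S^{n-1}$, where $|x_n|^a$ is singular when $a<0$. Both are routine, so I would expect the write-up to be brief.
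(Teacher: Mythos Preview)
Your proposal is correct and is essentially the same domain-monotonicity argument via Courant--Fischer that the paper gives. The only cosmetic difference is that the paper works directly with the max--min form as stated and, for each $(k-1)$-dimensional $S\subset V$, builds a test vector $w\in S^{\perp}$ as a combination of the first $k$ eigenfunctions of $W$, whereas you pass to the equivalent min--max form so that the inclusion $\iota(W)\subset V$ of competitor subspaces makes $\lambda_k\le\gamma_k$ immediate; the substance is identical.
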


\begin{proof}
The proof is along the same lines of the proof of the maximum-minimum principle provided in \cite{MR0065391}. The only difference is we take a linear combination of the first $k$ eigenvectors in $W$ rather than in $V$.
Specifically, let $S$ be any $k-1$ dimensional subspace of $V$. Let $w_1, \ldots , w_k$ be the normalized eigenfunctions corresponding to the first $k$ eigenvalues of the subspace $W$. We may then construct
\[
w = \sum_{i=1}^{k}{c_i w_i} \quad \text{ with  } \sum_{i=1}^{k}{c_{i}^2} = 1
\]
and such that $w \in S^{\perp}$. Since the $w_i$ are orthogonal to each other we obtain that 
\[
\int_{S^{n-1}}{|x_n|^a |\nabla w|^2} = \sum_{i=1}^{k}{\gamma_i c_i^2 \leq \gamma_k}
\]
Thus we have shown that for $S$ any $k-1$ dimensional subspace of $V$
\[
\min_{\overset{ v \in S^{\perp}}{\left\| v \right\|_{\mathcal{L}_a^2}=1}} 
\int_{S^{n-1}}{|x_n|^a|\nabla v|^2} 
\leq \gamma_k
\]
Then by the Courant-Fischer maximum-minimum principle, $\lambda_k \leq \gamma_k$, and the proposition is proven. 
\end{proof}

\begin{theorem}    \label{T: originregularity}
Let $\mathcal{L}_a u = 0$ in all of $\R^n$ and let $u$ be homogeneous of degree $\alpha$ with $u(0)=0$. If $\alpha < \min \{1, 1-a\}$, then $u \equiv 0$. 
\end{theorem}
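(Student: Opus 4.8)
\emph{Proof proposal.} If $\alpha\le 0$ the conclusion is immediate: an $a$-harmonic function is continuous by \cite{MR643158}, and a continuous function that is homogeneous of a non-positive degree and vanishes at the origin must vanish identically (if it were nonzero at some $x_0$ then $u(tx_0)=t^{\alpha}u(x_0)$ would fail to tend to $u(0)=0$ as $t\to 0^+$ when $\alpha<0$, while for $\alpha=0$ it is constant equal to $u(0)=0$). So assume $0<\alpha<\min\{1,1-a\}$; in particular $\alpha<1$ and $\alpha+a<1$.

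The crux is that differentiating in a tangential direction preserves $a$-harmonicity. Fix $i\in\{1,\dots,n-1\}$. Since the weight $|x_n|^{a}$ does not involve $x_i$, every translate $u(\,\cdot\,+he_i)$ solves $\mathcal L_a(\cdot)=0$, and hence so does the difference quotient $\delta_i^{h}u:=h^{-1}\bigl(u(\,\cdot\,+he_i)-u\bigr)$. Because the translation is tangential the weight is unchanged by it, so the standard difference-quotient estimate bounds $\|\delta_i^{h}u\|_{L^2(a,K)}$ by $\|\nabla u\|_{L^2(a,K')}$ for $K\Subset K'\Subset\R^n$, uniformly in small $h$, and the Caccioppoli inequality for $a$-harmonic functions (used already in the proof of Lemma \ref{L: lowerbound}) then bounds $\|\nabla\delta_i^{h}u\|_{L^2(a,K)}$. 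Hence along a subsequence $\delta_i^{h}u\rightharpoonup\partial_i u$ in $H^1(a,K)$, and since a weak $H^1(a)$-limit of solutions is a solution, $\partial_i u$ is $a$-harmonic in $\R^n$. By homogeneity of $u$, $\partial_i u$ is homogeneous of degree $\alpha-1<0$; applying the first paragraph to $\partial_i u$ (which vanishes at the origin by continuity) gives $\partial_i u\equiv 0$. As this holds for every $i=1,\dots,n-1$, the function $u$ depends on $x_n$ alone: $u(x)=U(x_n)$, and homogeneity of degree $\alpha$ forces $U(t)=At^{\alpha}$ for $t>0$ and $U(t)=B|t|^{\alpha}$ for $t<0$, with constants $A,B$.

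Finally we use the equation itself. In the open half-space $\{x_n>0\}$ the weight $|x_n|^{a}$ is smooth and positive, so $u=U(x_n)$ solves there, classically, $\frac{d}{dx_n}\bigl(x_n^{a}U'(x_n)\bigr)=0$; thus $x_n^{a}U'(x_n)=A\alpha\,x_n^{\alpha+a-1}$ is constant on $(0,\infty)$. Since $\alpha>0$ and $\alpha+a-1\ne 0$, a nonzero multiple of $x_n^{\alpha+a-1}$ cannot be constant, so $A=0$; the same reasoning on $\{x_n<0\}$ gives $B=0$. Hence $U\equiv 0$, i.e.\ $u\equiv 0$, which completes the proof.

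The only step beyond routine bookkeeping is the assertion that the tangential derivative $\partial_i u$ is again a weak solution of $\mathcal L_a(\cdot)=0$ — the expected tangential regularity for $\mathcal L_a$ — which the difference-quotient/Caccioppoli argument supplies; once $u$ has been reduced to a function of $x_n$ alone the rest is elementary. (In the spectral language of this section the argument shows directly that $-\mathcal L_a^{\theta}$ on $S^{n-1}$ has no eigenvalue strictly between $0$ and $\min\{1,1-a\}\bigl(\min\{1,1-a\}+n-2+a\bigr)$, bypassing the eigenvalue comparison of Proposition \ref{P: eigencomparison}.)
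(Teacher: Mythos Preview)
Your argument is correct, and the route differs from the paper's in a meaningful way. Both proofs begin with tangential regularity, but they extract different consequences from it. The paper cites \cite{MR2367025} for $C^1$ tangential regularity of $a$-harmonic functions and uses it only to conclude that $u(x',0)\equiv 0$ (since a function that is $C^1$ at the origin in the $x'$-directions and homogeneous of degree $\alpha<1$ must vanish on the thin space); it then observes that the restriction of $u$ to the upper hemisphere lies in $H_0^1(a,S_+^{n-1})$ and invokes the principal-eigenfunction property of $x_n^{1-a}$ there to rule out any eigenvalue below $(1-a)(n-1)$. Your argument instead proves---via tangential difference quotients and Caccioppoli---that each $\partial_i u$ is itself globally $a$-harmonic, hence H\"older continuous, and then uses continuity together with homogeneity of the negative degree $\alpha-1$ to force $\partial_i u\equiv 0$. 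This is strictly stronger than $u(x',0)=0$: it reduces $u$ to a function of $x_n$ alone, after which the one-variable ODE $(|x_n|^a U')'=0$ immediately excludes the profile $|x_n|^{\alpha}$ for $0<\alpha<1-a$.

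What each approach buys: your proof is more self-contained---it needs neither the external $C^1$ reference nor any spectral comparison, and it gives a direct classification of the low-degree homogeneous solutions. The paper's proof is shorter once the cited regularity is granted, and it is phrased in the eigenvalue language that the rest of Section~\ref{S: courant} is built on; in particular it meshes naturally with Proposition~\ref{P: eigencomparison}, which is the tool used in the subsequent Corollary~\ref{C: homogeneity}. One small wording point: when you write that $\partial_i u$ ``vanishes at the origin by continuity,'' the logic is really that a function which is continuous at $0$ and homogeneous of a strictly negative degree must be identically zero; you do not need to know $\partial_i u(0)=0$ in advance.
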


\begin{proof}
Solutions of $\mathcal{L}_a u =0$ are $C^1$ in any $(x',0)$ direction \cite{MR2367025}. Since $u$ is homogeneous of degree $\alpha < 1$, we may conclude that $u(x',0) \equiv 0$. (We must be differentiable in any $(x',0)$ direction at the origin.) We note that $x_n^{1-a}$ is the principle eigenfunction on $H_0^1(a,S_+^{n-1})$ since it is positive. Here $S_+^{n-1} = S^{n-1} \cap \{x_n > 0\}$. 
Now $u \in H_0^1(a,S_+^{n-1})$ and $\alpha < 1-a$, so the eigenvalue associated to $u$ is strictly less than that of the eigenvalue associated to that of $x_n^{1-a}$. Then $u \equiv 0$. 
\end{proof}

\begin{corollary}    \label{C: homogeneity}
Let $u$ be homogeneous of degree $\alpha$, having nontrivial positive and negative parts, continuous, and such that
\[
\mathcal{L}_a u(x) = 0
\]
whenever $x \notin \Lambda(u)$. Then $\alpha \geq \min \{1, 1-a\}$
\end{corollary}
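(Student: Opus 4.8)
The plan is to reduce the claim to Theorem~\ref{T: originregularity} by showing that a homogeneous function $u$ with nontrivial positive and negative parts, satisfying $\mathcal L_a u = 0$ off the coincidence set, must in fact be $a$-harmonic in \emph{all} of $\R^n$. Once that reduction is achieved, the degree bound $\alpha \geq \min\{1,1-a\}$ follows immediately: either $u(0)=0$, in which case Theorem~\ref{T: originregularity} (or Lemma~\ref{L: lowerbound}) applies directly; or $u(0)\neq 0$, but by homogeneity of degree $\alpha>0$ we have $u(0)=0$ automatically (a homogeneous function of positive degree vanishes at the origin, and if $\alpha \le 0$ the function cannot be continuous unless constant, contradicting the existence of nontrivial positive and negative parts). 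So the whole content is the reduction.

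To establish that $u$ is $a$-harmonic across the thin space, I would argue that the coincidence set $\Lambda(u) = (\R^{n-1}\times\{0\})\cap\{u=0\}$ is small enough to be removable. The key structural fact is that $u$, being homogeneous, is determined by its restriction to $S^{n-1}$, and $\Lambda(u)\cap S^{n-1}$ is a relatively closed subset of $S^{n-1}_0 := S^{n-1}\cap(\R^{n-1}\times\{0\})$ separating the open set $\{u(\cdot,0)>0\}$ from $\{u(\cdot,0)<0\}$ on the $(n-1)$-sphere. The real point is capacity: the weight $|x_n|^a\,dx$ with $-1<a<1$ is an $A_2$ Muckenhoupt weight (Section~\ref{S: weights}), and I would show $\Lambda(u)$ has zero $a$-capacity — intuitively because sets contained in the thin space $\R^{n-1}\times\{0\}$ that have positive $a$-capacity must carry positive $\H^{n-1}$-measure there, and a closed set of positive $\H^{n-1}$-measure on the thin hyperplane cannot be a "boundary" set separating two regions without the free-boundary structure forcing some thickness. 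Alternatively, and more robustly, I would test the weak formulation: for $\varphi \in C_c^\infty(\R^n)$ use a logarithmic cutoff $\eta_\varepsilon$ vanishing near $\Lambda(u)$ (exactly the construction in the proof of Proposition~\ref{P: byparts}), write $\int |x_n|^a\langle\nabla u,\nabla(\eta_\varepsilon\varphi)\rangle = 0$ since $\mathcal L_a u=0$ off $\Lambda(u)$, and pass to the limit. The error term is $\int |x_n|^a u\,\langle\nabla\eta_\varepsilon,\nabla\varphi\rangle$, controlled by $\|u\|_{L^\infty}\|\nabla\varphi\|_\infty \int_{\{d_x\le 2\varepsilon\}}|x_n|^a|\nabla\eta_\varepsilon|$, and the computation in Proposition~\ref{P: byparts} (using $|x_n|\le d_x$ and $|\nabla\eta_\varepsilon|\sim\varepsilon^{-1}$) shows $\int_{\{|x_n|\le 2\varepsilon\}}|x_n|^a\,\varepsilon^{-1}\to 0$ as $\varepsilon\to 0$. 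Hence $\int|x_n|^a\langle\nabla u,\nabla\varphi\rangle=0$ for all test $\varphi$, i.e.\ $\mathcal L_a u = 0$ in $\R^n$.

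The main obstacle I anticipate is making the cutoff argument fully rigorous near the origin and near $\partial\{u>0\}$: one needs that $u\in H^1_{\mathrm{loc}}(a,\R^n)$ with the right integrability so the error estimate actually closes, and for this the homogeneity degree $\alpha$ together with optimal regularity ($u\in C^{0,s}$, or at least $u$ bounded near $S^{n-1}$) is exactly what is needed — there is a mild circularity to watch, but since we only use \emph{boundedness} and $H^1(a)$-membership (both available from continuity plus homogeneity plus $\alpha>0$), not the sharp exponent, it is fine. A secondary subtlety is the degenerate case $\alpha \le 0$ or $u(0)\ne 0$; these are dispatched by the elementary observation above about homogeneous continuous functions. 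With $\mathcal L_a u = 0$ established globally and $u(0)=0$, Theorem~\ref{T: originregularity} gives $\alpha\ge\min\{1,1-a\}$ unless $u\equiv 0$, and $u\equiv 0$ is excluded by hypothesis (nontrivial positive part). This completes the argument.
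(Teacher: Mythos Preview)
Your reduction---showing that $u$ must in fact satisfy $\mathcal L_a u=0$ in all of $\R^n$---is false, and this is a genuine gap, not a technicality. Here is a counterexample satisfying every hypothesis of the corollary: for any constant $c\neq 1$, set
\[
u(x)=\begin{cases} |x_n|^{1-a}, & x_n\ge 0,\\ -c\,|x_n|^{1-a}, & x_n<0.\end{cases}
\]
This $u$ is continuous, homogeneous of degree $\alpha=1-a$, has nontrivial positive and negative parts, and $\mathcal L_a u=0$ on $\{x_n\neq 0\}=\R^n\setminus\Lambda(u)$. But the flux $|x_n|^a u_{x_n}$ equals $(1-a)$ from above and $c(1-a)$ from below, so $\mathcal L_a u$ is a nonzero multiple of $\H^{n-1}\lfloor\{x_n=0\}$. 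Thus $u$ is \emph{not} $a$-harmonic in $\R^n$, and your removability step cannot hold at this level of generality. More concretely, your cutoff computation misidentifies the error term: expanding $\nabla(\eta_\varepsilon\varphi)$ gives the cross term $\int|x_n|^a\,\varphi\,\langle\nabla u,\nabla\eta_\varepsilon\rangle$, not $\int|x_n|^a\,u\,\langle\nabla\eta_\varepsilon,\nabla\varphi\rangle$. The correct term detects exactly the flux jump across $\Lambda(u)$ and does not vanish. The analogy with Proposition~\ref{P: byparts} is misleading: there the test function is (a cutoff times) $u$ itself, which vanishes on $\Lambda(u)$, so any measure supported on $\Lambda(u)$ is integrated against zero. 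For a generic $\varphi$ this cancellation disappears. Relatedly, your capacity heuristic fails: subsets of the thin space with positive $\H^{n-1}$-measure have positive $a$-capacity (this is the trace inequality), so $\Lambda(u)$ is typically \emph{not} removable.

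The paper's argument avoids this obstruction entirely by working spectrally. One restricts $u$ to the sphere, where it is an eigenfunction of $-\mathcal L_a^\theta$ on the open set $\Omega=S^{n-1}\setminus\Lambda(u)$ with Dirichlet conditions. If $\Omega$ is disconnected one compares directly with the principal eigenfunction $x_n^{1-a}$ on a half-sphere. If $\Omega$ is connected, then because $u$ changes sign it is orthogonal to the (positive) ground state on $\Omega$, so its eigenvalue is at least the second Dirichlet eigenvalue $\gamma_2(\Omega)$. The Courant--Fischer maximum--minimum principle (Proposition~\ref{P: eigencomparison}) gives $\gamma_2(\Omega)\ge\lambda_2(S^{n-1})$, and Theorem~\ref{T: originregularity} applied to the genuine global eigenfunction for $\lambda_2$ yields the bound $\alpha\ge\min\{1,1-a\}$. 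The point is that this comparison never asks $u$ to extend $a$-harmonically across $\Lambda(u)$; it only uses that $H^1_0(a,\Omega)\subset H^1(a,S^{n-1})$, which is exactly what makes the min--max inequality go in the right direction.
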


\begin{proof}
Since $u$ is homogeneous, then $u$ is an eigenfunction of $\mathcal{L}_a^{\theta}$ on $\Omega = S^{n-1} \setminus \Lambda(u) $. If $\Omega$ is not connected, then $\Lambda(u)= B_{1}'$. Then by comparison with the principle eigenfunction $x_n^{1-a}$ (as in the proof of Theorem \ref{T: originregularity}) $\alpha \geq 1-a$. If $\Omega$ is connected, then since $u$ has nontrivial positive and negative parts, $u$ cannot be the principle eigenfunction. By Proposition \ref{P: eigencomparison} the eigenvalue $\gamma_2$ of $u$ is such that 
\[
\lambda_2 \leq \gamma_2
\]
where $\lambda_2$ is the eigenvalue corresponding to the first free eigenfunction $g$ on $S^{n-1}$. That is $\mathcal{L}_a^{\theta} g = \lambda_2 g$ on $S^{n-1}$. We may then define $v = r^{\beta}g$ where $\lambda_2 = \beta(\beta +n-2+a)$. Then $\mathcal{L}_a v = 0$ in $\R^n$ by Lemma \ref{L: spheresolution}, and so by Theorem \ref{T: originregularity} we know $\beta \geq \min\{1, 1-a \}$.  Since $\gamma_2 = \alpha (\alpha + n-2+a)$ and $\lambda_2 = \beta(\beta +n-2+a)$, we see then that $\alpha \geq \beta \geq \min \{1,1-a\}$. 
\end{proof}

\section{Separation of the Free Boundaries}      \label{S: separation}
We may now prove the main theorem of the paper. We first show the separation of the phases.  

\begin{theorem}   \label{T: separation}
Let $u$ be a minimizer. Then $\Gamma^+ \cap \Gamma^- = \emptyset$.
\end{theorem}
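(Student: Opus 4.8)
The plan is to argue by contradiction: suppose $x_0 \in \Gamma^+ \cap \Gamma^-$. After translating we may take $x_0 = 0$, so that $0 \in \Gamma^+ \cap \Gamma^-$. The strategy is to blow up at the origin and derive a contradiction from the degree of homogeneity of the blow-up. First I would form the rescalings $u_r(x) = u(rx)/r^s$ from \eqref{E: rescale} and use Corollary \ref{C: holderconv} to extract a subsequence $u_{r_k} \to u_0$, converging in $C^\beta_{loc}$ and weakly in $H^1(a,\cdot)$, with $u_0$ a minimizer on all of $\R^n$. By Corollary \ref{C: monotone}, $u_0$ is homogeneous of degree exactly $s = (1-a)/2$.

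Next I would verify that the blow-up $u_0$ still has both phases nontrivial, i.e. $0 \in \Gamma^+(u_0) \cap \Gamma^-(u_0)$, so in particular $u_0$ has nontrivial positive and negative parts. This is where nondegeneracy enters: since $0 \in \Gamma^+$, Corollary \ref{C: 1/2 growth} gives $\sup_{\partial B_r} u \geq C r^s$, which rescales to $\sup_{\partial B_1} u_{r_k} \geq C$, and by the uniform convergence $\sup_{\partial B_1} u_0 \geq C > 0$; symmetrically $\inf_{\partial B_1} u_0 \leq -C < 0$. Hence $u_0^+$ and $u_0^-$ are both nontrivial. Moreover, since $0 \in \Gamma^+$ and $0 \in \Gamma^-$, the coincidence set $\Lambda(u)$ cannot contain a thin neighborhood of the origin, and this passes to $u_0$ — more carefully, one shows $0$ is still a two-phase free boundary point of $u_0$, or at the very least that $u_0$ itself is a nonzero homogeneous minimizer with both phases present, and $u_0(0) = 0$ by continuity and homogeneity of positive degree.

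Now I would invoke Proposition \ref{P: solution}: the minimizer $u_0$ satisfies $\mathcal{L}_a u_0 = 0$ on $\R^n \setminus \Lambda(u_0)$, i.e. away from its coincidence set. Thus $u_0$ satisfies all the hypotheses of Corollary \ref{C: homogeneity}: it is continuous, homogeneous of degree $\alpha = s$, has nontrivial positive and negative parts, and is $a$-harmonic off $\Lambda(u_0)$. Corollary \ref{C: homogeneity} then forces $\alpha \geq \min\{1, 1-a\}$. But $\alpha = s = (1-a)/2$, and since $-1 < a < 1$ we have $s = (1-a)/2 < \min\{1, 1-a\}$: indeed $(1-a)/2 < 1 \iff a > -1$, and $(1-a)/2 < 1-a \iff a < 1$. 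This is the desired contradiction, so $\Gamma^+ \cap \Gamma^- = \emptyset$.

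The main obstacle I anticipate is the second step — rigorously transferring the two-phase structure from $u$ to the blow-up $u_0$. The nondegeneracy via Corollary \ref{C: 1/2 growth} cleanly shows both $u_0^+$ and $u_0^-$ are nonzero, which is really all Corollary \ref{C: homogeneity} needs, so the argument should go through with that observation alone; one does not actually need $0$ to remain a free boundary point of $u_0$ in the full sense. A secondary technical point is confirming $u_0$ is genuinely a minimizer on every compact subset (so that Proposition \ref{P: solution} applies), but this is standard from the convergence in Corollary \ref{C: holderconv} together with the energy bound \eqref{E: ebound1}. Once the homogeneity degree $s$ and the nontriviality of both phases are in hand, the contradiction with Corollary \ref{C: homogeneity} is immediate arithmetic.
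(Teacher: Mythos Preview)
Your proposal is correct and follows essentially the same route as the paper's proof: blow up at a hypothetical two-phase point, use nondegeneracy (Corollary~\ref{C: 1/2 growth}) together with $C^\beta$ convergence to ensure the blow-up $u_0$ has nontrivial positive and negative parts, use Corollary~\ref{C: monotone} to get homogeneity of degree $s=(1-a)/2$, and then contradict Corollary~\ref{C: homogeneity} via the arithmetic $(1-a)/2 < \min\{1,1-a\}$. The only cosmetic difference is that the paper derives $\mathcal{L}_a u_0 = 0$ off $\Lambda(u_0)$ directly from the convergence in Corollary~\ref{C: conv} (limits of $a$-harmonic functions are $a$-harmonic), whereas you route through ``$u_0$ is a minimizer'' and Proposition~\ref{P: solution}; either works, and your own observation that only nontriviality of both phases (not a full two-phase free boundary point for $u_0$) is needed is exactly right.
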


\begin{proof}
Suppose by way of contradiction that $x_0 \in \Gamma^+ \cap \Gamma^-$. Let $u_r \to u_0$ be a blow-up. By nondegeneracy (Corollary \ref{C: 1/2 growth}) and $C^{\beta}$ convergence (Corollary \ref{C: conv}) $u_0$ has nontrivial positive and negative parts. Also it follows from Corollary \ref{C: conv} that $\mathcal{L}_a u_0(x) =0$ if $x \notin \Lambda(u_0)$. By Corollary \ref{C: monotone} we know that $u_0$ is homogeneous of degree $s=(1-a)/2$. Since $(1-a)/2 < \min\{1, 1-a\}$, we obtain a contradiction to Corollary \ref{C: homogeneity}.
\end{proof}

We may now prove the second half of Theorem I. Namely, in a small neighborhood of each free boundary point a minimizer has a sign in the solid ball. 

\begin{theorem}    \label{T: solidsep}
Let $x_0 \in \Gamma^+ \ (x_0 \in \Gamma^-)$ then there exists $r>0$ depending on $x_0$ such that $u \geq 0 \ (u \leq 0)$ in the solid ball $B_r(x_0)$.
\end{theorem}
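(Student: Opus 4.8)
The plan is to combine the thin-space separation already proved in Theorem \ref{T: separation} with the off-thin-space growth estimate \eqref{E: hgrowth} and the maximum principle for $a$-harmonic functions. Suppose $x_0 \in \Gamma^+$. By Theorem \ref{T: separation} we have $x_0 \notin \Gamma^-$, so there is a thin ball $B_\rho'(x_0)$ on which $u(\cdot,0) \geq 0$; indeed $\{u(\cdot,0) < 0\}$ is closed away from $\Gamma^-$, so we may choose $\rho > 0$ with $u(x',0) \geq 0$ for all $x' \in B_\rho'(x_0)$. The claim is that this sign propagates into the full solid ball $B_r(x_0)$ for $r$ somewhat smaller than $\rho$.

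First I would reduce to a statement about the two connected components of $B_r(x_0) \setminus (\R^{n-1}\times\{0\})$, i.e.\ the upper and lower half-balls. On each such half-ball, Proposition \ref{P: solution} gives $\mathcal{L}_a u = 0$, since the coincidence set $\Lambda(u)$ lives in the thin space. The idea is to run a comparison argument: the boundary of the upper half-ball $B_r^+(x_0)$ consists of the flat part (a subset of $B_\rho'(x_0)$, where $u \geq 0$) and the spherical cap. If $u$ were negative somewhere in $B_r^+(x_0)$, then by the maximum principle for $a$-harmonic functions (available from \cite{MR643158}, as quoted in Section \ref{S: weights}) the negative part must be ``charged'' from the spherical cap, and one must rule this out. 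The cleanest way I see is via a blow-up/rescaling contradiction combined with nondegeneracy (Theorem \ref{T: nondegeneracy}): if no such $r$ existed, we could find a sequence $r_k \to 0$ and points $y_k \in B_{r_k}(x_0)$ with $u(y_k) < 0$; after rescaling $u_{r_k}(x) = u((x_0,0) + r_k x)/r_k^s$, Corollary \ref{C: conv} extracts a blow-up limit $u_0$ which is $a$-harmonic off the thin space, homogeneous of degree $s$ by Corollary \ref{C: monotone}, has $u_0(\cdot,0) \geq 0$ on $B_1'$ (passing to the limit the sign condition, which survives because $x_0 \notin \Gamma^-$), yet has a point in $\overline{B_1}$ with $u_0 \leq 0$ off the thin space.

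The key step — and the main obstacle — is to extract a contradiction from this limiting configuration. The point is that $u_0$ is an $a$-harmonic function in each half-space $\{\pm x_n > 0\}$ with nonnegative thin-space trace, so by the maximum principle $u_0 \geq 0$ in each half-ball; homogeneity then forces $u_0 \geq 0$ in the closed ball, and any interior zero off the thin space would, by the strong maximum principle (Harnack, \cite{MR643158}), force $u_0 \equiv 0$ on that half-ball, contradicting that $u_0(\cdot,0) > 0$ somewhere in $B_1'$ (which follows from nondegeneracy, Corollary \ref{C: 1/2 growth}, since $0 \in \Gamma^+$ for the blow-up). The care needed is in the boundary behavior on the flat part: one must verify that $u_0$ is genuinely $a$-harmonic up to the flat boundary with the given trace — but this is exactly what Proposition \ref{P: solution} gives on the open set $B_1 \setminus \Lambda(u_0)$, and since $u_0(\cdot,0) \geq 0$ makes $\Lambda(u_0)$ only a subset of $\{u_0(\cdot,0)=0\}$, the comparison is legitimate on the relevant open subdomains. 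Once the contradiction at the blow-up level is secured, undoing the rescaling gives the desired $r > 0$, and the case $x_0 \in \Gamma^-$ follows by replacing $u$ with $-u$ (noting that $-u$ minimizes the functional with $\lambda^+$ and $\lambda^-$ swapped).

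$\square$
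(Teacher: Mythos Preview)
Your strategy matches the paper's --- blow up and show $u_0 \geq 0$ --- but the contradiction step does not close. The claim that the limit $u_0$ has ``a point in $\overline{B_1}$ with $u_0 \leq 0$ off the thin space'' is unjustified: the rescaled negative points $z_k \in B_1$ (which do lie off the thin space, since $u_{r_k}(\cdot,0)\geq 0$) may converge to a limit $z$ \emph{on} the thin space, where $u_0(z)=0$ is entirely consistent (indeed $u_0(0)=0$), and the strong maximum principle never enters. Knowing $u_0 \geq 0$, even $u_0>0$ off $\{x_n=0\}$, does not by itself rule out $u_{r_k}$ taking small negative values near $\Lambda(u_0)$ for every $k$. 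The paper handles this by a quantitative barrier argument at the pre-limit level: it first shows $u_0 \geq \delta > 0$ on $B_1 \cap \{|x_n|\geq 1/2\}$ (using an odd-reflection trick together with Theorem~\ref{T: originregularity} to rule out $u_0\equiv 0$ on a half-space), then compares $u_{r_k}$ in $B_1^+$ with an $a$-harmonic replacement and uses the boundary Harnack inequality for the relevant harmonic measures to prove $u_{r_k}\geq 0$ in $B_{1/2}^+$ directly for large $k$. Some such barrier argument controlling the strip near the thin space appears to be unavoidable.

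Two smaller points: your maximum-principle step ``nonnegative thin trace implies $u_0\geq 0$ in each half-ball'' ignores the spherical cap of $\partial(B_1^+)$, where you have no sign information; the paper instead obtains $u_0\geq 0$ immediately from Corollary~\ref{C: homogeneity}, since the homogeneity degree $s<\min\{1,1-a\}$. And nondegeneracy (Corollary~\ref{C: 1/2 growth}) gives $\sup_{\partial B_r}u_0>0$ on the full sphere, not on the thin ball $B_1'$, so your appeal to ``$u_0(\cdot,0)>0$ somewhere'' as the source of the contradiction is not supported; the scenario $u_0\equiv 0$ on one half-space and $u_0>0$ on the other must be excluded separately, which is exactly what the odd-reflection step in the paper does.
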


\begin{proof}
Without loss of generality we may assume $x_0 =0$. Let $u_{r_k} \to u_0$ be a blow-up of $u$ at the origin. Since $u_0$ is homogeneous of degree $s$,  Corollary \ref{C: homogeneity} allows us to conclude $u_0 \geq 0$ in all of $\R^n$. Since each $u_{r_k}(x',x_n)$ is
  $a$-harmonic in the open set $\{x \in B_{1/ r_k} \mid x_n \neq 0\}$,
  then $u_0$ will be $a$-harmonic in the open set $\{x \in \R^n \mid x_n
  \neq 0\}$. We define
  \[
  \delta = \inf u_0 \text{   over the set } B_1 \cap \{|x_n| \geq 1/2\}.
  \]
  We claim that $\delta>0$. Indeed, otherwise by the strong minimum
  principle (or Harnack inequality) $u_0\equiv 0$ in $\R^n_+$ or $\R^n_-$, and therefore
  $u_0\equiv 0$ on $\R^{n-1}\times\{0\}$. By nondegeneracy we know that on either $\R^n_+$ or $\R^n_-$ we have $u_0 >0$. Then by odd reflection we obtain a homogeneous (of degree $s=(1-a)/2$) function  $\tilde{u}_0$ that is $a$-harmonic in all of $\R^n$. This is a contradiction to Theorem \ref{T: originregularity}. So $\delta >0$.
  
   Then, by $C^{\alpha}$ convergence, for large
  enough $k$, $u_{r_k}(x',x_n) \geq \delta /2$ for $|x_n| \geq 1/2$ in
  $B_1$.  Also by $C^{\alpha}$ convergence, $\inf_{B_1} u_{r_k} \to
  0$. Now by thin separation, for large enough $k$,
  \[
  u_{r_k}(x',0) \geq 0 \text{ in } B_1'
  \]
  Without loss of generality it suffices to show that $u_{r_k} \geq 0$
  in $B_{1/2}^+$. Let $v_k$ be the $a$-harmonic function such that
  \[
  v_k|_{B_1'}=0 \text{, and } v_k |_{\partial B_1^+} = u_{r_k}
  \]
  Then $ v_k \leq u_{r_k} $ in all of $B_1^+$. We show for $k$ large
  enough that $v_k \geq 0$ in $B_{1/2}^+$.  To this end, consider two
  subsets $E_1$ and $E_2$ of $\partial({B_1^+})$:
$$
E_1=\partial(B_1^+)\cap \{x_n\geq 1/2\},\quad E_2=\partial(B_1^+)\cap
\{0<x_n<1/2\},\quad
$$
and there $a$-harmonic measures $\omega_1$ and $\omega_2$ with respect to
the domain $B_1^+$. The latter means that $\omega_i$ are $a$-harmonic
functions in $B_1^+$ satisfying
$$
\omega_i|_{\partial (B_1^+) }=\chi_{E_i},\quad i=1,2.
$$
By using  the
boundary Harnack inequality, one then has that
$$
c |x_n|^{1-a} \leq \omega_i(x)\leq C |x_n|^{1-a} \quad\text{in }B_{1/2}^+.
$$
for some positive constants $c$ and $C$ depending on $n$ and $a$.  Now, by
using the maximum principle we then can write that in $B_{1/2}^+$
\begin{align*}
  v_k(x)&\geq (\delta/2) \omega_1(x)+\omega_2(x)\inf_{B_1^+}v_k\\
  & \geq |x_n|^{1-a} [(\delta/2)c- C\sup_{(\partial B_1)^+} u_{r_k}^-].
\end{align*}
Since $u_{r_k}^- \to 0$ uniformly on compact subsets of $\R^n$, we
obtain that $v_k(x)\geq 0$ in $B_{1/2}^+$ for large $k$. This
completes the proof.
\end{proof}

\appendix
\section{Proof of Almgren's Formula}
\begin{proof}[Lemma \ref{L: almgren}]
The proof of Lemma \ref{L: almgren} relies on the following equality 
\begin{equation}    \label{E: almgrenlemma}
D'(r)= \frac{n-2+a}{r}D(r) + \int_{\partial B_r}{|x_n|^a 2u_{\nu}^2}
\end{equation}
\eqref{E: almgrenlemma} is \eqref{E: almgrenmin} in the case that $\lambda^+ = \lambda^- =0$ ($a$-harmonic functions are minimizers of \eqref{E: functional} when $\lambda^+ = \lambda^- =0$).  We also have 
\begin{equation}   \label{E: almgrenlemma2}
\int_{B_r}{|x_n|^a |\nabla u|^2} = \int_{\partial B_r}{|x_n|^a u u_{\nu}}
\end{equation}
We obtain \eqref{E: almgrenlemma2} by recalling that $\mathcal{L}_a u =0$ in $B_1$ and using $\eta_k u$ as a test function where $\eta_k$ is defined as in \eqref{E: etak}, then
\[
\int_{B_r}{|x_n|^a \langle \nabla u, \nabla (u \eta_k) \rangle} = 0
\]
By letting $k \to 0$ we obtain \eqref{E: almgrenlemma2}.
The monotonicity of $N(r)$ as well as case of equality then follow from \eqref{E: almgrenlemma} and \eqref{E: almgrenlemma2} exactly as shown in \cite{CS}. 
\end{proof}

\section{Proof of Nondegeneracy}   
We begin this section with the so called Lattice principle. Since minimizers are not necessarily unique, we may not necessarily conclude that if $u$ and $v$ are two minimizers with $u \leq v$ on $\partial D$, then $u \leq v$ in $D$. Instead we have the following theorem.

\begin{theorem}[Lattice Principle] \label{T: Maximumprinciple}
Let $u,v$ be two minimizers of the functional $J$ with $u|_{\partial D} \leq v$. If we define $w_1 \equiv \max \{u,v\}$ and $w_2 \equiv \min \{u,v\}$, then $w_1$ and $w_2$ are minimizers of the functional $J$.
\end{theorem}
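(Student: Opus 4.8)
The plan is to use the classical lattice identity for the energy together with the minimality of $u$ and of $v$. Write $J(w) = J_{\mathrm{b}}(w) + J_{\mathrm{t}}(w)$, where $J_{\mathrm{b}}(w) = \int_D |x_n|^a |\nabla w|^2$ is the bulk part and $J_{\mathrm{t}}(w) = \int_{D'} \lambda^+ \chi_{\{w>0\}} + \lambda^- \chi_{\{w<0\}} \, d\H^{n-1}$ is the thin part. Since $u,v \in H^1(a,D)$ and $|x_n|^a$ is an $A_2$ weight, $w_1 = \max\{u,v\}$ and $w_2 = \min\{u,v\}$ again lie in $H^1(a,D)$, and the usual truncation rules give, a.e.\ in $D$,
\[
\nabla w_1 = \chi_{\{u \ge v\}} \nabla u + \chi_{\{u < v\}} \nabla v, \qquad \nabla w_2 = \chi_{\{u \ge v\}} \nabla v + \chi_{\{u < v\}} \nabla u,
\]
together with $\nabla u = \nabla v$ a.e.\ on $\{u=v\}$. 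Splitting $D$ into $\{u>v\}$, $\{u<v\}$ and $\{u=v\}$ one sees that at a.e.\ point the unordered pair $\{\nabla w_1, \nabla w_2\}$ coincides with $\{\nabla u, \nabla v\}$, so $|\nabla w_1|^2 + |\nabla w_2|^2 = |\nabla u|^2 + |\nabla v|^2$ a.e., and hence
\[
J_{\mathrm{b}}(w_1) + J_{\mathrm{b}}(w_2) = J_{\mathrm{b}}(u) + J_{\mathrm{b}}(v).
\]

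For the thin part, recall minimizers are continuous (Theorem \ref{T: regularity}), so $u(\,\cdot\,,0)$ and $v(\,\cdot\,,0)$ are continuous on $D'$ and $w_1(\,\cdot\,,0), w_2(\,\cdot\,,0)$ are their pointwise max and min. Checking the finitely many sign configurations of the pair $\bigl(u(x',0), v(x',0)\bigr)$ — both positive, both negative, or one nonnegative and one nonpositive — gives the elementary pointwise identities
\[
\chi_{\{w_1>0\}} + \chi_{\{w_2>0\}} = \chi_{\{u>0\}} + \chi_{\{v>0\}}, \qquad \chi_{\{w_1<0\}} + \chi_{\{w_2<0\}} = \chi_{\{u<0\}} + \chi_{\{v<0\}}
\]
a.e.\ on $D'$, so $J_{\mathrm{t}}(w_1) + J_{\mathrm{t}}(w_2) = J_{\mathrm{t}}(u) + J_{\mathrm{t}}(v)$, and adding to the bulk identity, $J(w_1) + J(w_2) = J(u) + J(v)$. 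Next I would fix the boundary data: on $\partial D$ we have $u \le v$, so $w_1 = v$ and $w_2 = u$ there. Since $u-v \in H^1(a,D)$ has trace $(u|_{\partial D} - v|_{\partial D})^+ = 0$, we get $(u-v)^+ \in H_0^1(a,D)$, whence $w_1 - v = (u-v)^+ \in H_0^1(a,D)$ and $w_2 - u = -(u-v)^+ \in H_0^1(a,D)$. Thus $w_1$ is an admissible competitor for the problem solved by $v$, and $w_2$ for the problem solved by $u$. Minimality yields $J(w_1) \ge J(v)$ and $J(w_2) \ge J(u)$; summing and comparing with the identity above forces equality in both, i.e.\ $J(w_1) = J(v)$ and $J(w_2) = J(u)$, so $w_1$ and $w_2$ are minimizers.

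The main point to be careful about — the only place the weighted, fractional setting really enters — is the bulk lattice identity: one must verify that $w_1, w_2 \in H^1(a,D)$ with the stated gradients and that $\nabla(u-v) = 0$ a.e.\ on $\{u=v\}$. These are pointwise-a.e.\ facts valid for any $W^{1,2}_{\mathrm{loc}}$ function away from $\{x_n=0\}$, where the equation is uniformly elliptic; near the thin space one truncates with the cutoffs $\eta_k$ of Proposition \ref{P: byparts} and passes to the limit, using $|x_n|^a \in A_2$ so that $\{|x_n| \le 2/k\}$ has small weighted measure. A secondary conceptual point worth emphasizing is that $u$ and $v$ need not share boundary data: one must pair $w_1$ with $v$ and $w_2$ with $u$, which is precisely what the hypothesis $u|_{\partial D} \le v$ allows.
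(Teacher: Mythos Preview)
Your proof is correct and follows essentially the same approach as the paper: establish the lattice identity $J(w_1)+J(w_2)=J(u)+J(v)$, observe that $w_1=v$ and $w_2=u$ on $\partial D$, and conclude from minimality of $u$ and $v$. The paper's own proof is a three-line sketch of exactly this argument, so your version simply fills in the details the paper leaves as ``fairly straightforward to check.''
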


\begin{proof}
It is fairly straightforward to check that 
\[
J(w_1) + J(w_2) = J(u) + J(v)
\]
Since $w_1 |_{\partial D} = v$ and $w_2 |_{\partial D} = u$, we conclude that $w_1$ and $w_2$ are minimizers of the functional $J$. 
\end{proof}

\begin{corollary} \label{C: nondegeneracy}
If the boundary data are symmetric about the line $(0,\dots ,0,x_n)$, then there is a maximal (minimal) minimizer, i.e. there exists a minimizer $u^*$ such that $v \leq u^*$ $(v \geq u^*)$ in $B$ for all other minimizers such that $v|_{\partial B} = u^*$. Furthermore, $u^*$ will be symmetric about the line $(0,\dots, 0 ,x_n)$
\end{corollary}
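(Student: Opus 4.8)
The plan is to realize $u^*$ as the pointwise supremum of the family $\mathcal{M}$ of all minimizers $v$ of $J$ in $B$ with $v|_{\partial B}=\phi$, to verify that this supremum is itself a member of $\mathcal{M}$, and then to deduce the symmetry from the invariance of $J$ under rotations about the $x_n$-axis; the minimal minimizer is obtained by the identical argument with suprema and maxima replaced by infima and minima. Two preliminary observations: first, the Lattice Principle iterates, since any two elements of $\mathcal{M}$ agree on $\partial B$, so repeated use of Theorem \ref{T: Maximumprinciple} shows that for $v_1,\dots,v_N\in\mathcal{M}$ the function $\max\{v_1,\dots,v_N\}$ is again a minimizer lying in $\mathcal{M}$ (each intermediate maximum has boundary trace $\phi$). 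Second, $\mathcal{M}$ is uniformly bounded: comparing a minimizer $v$ with its truncations $\min\{v,\|\phi\|_{L^\infty(\partial B)}\}$ and $\max\{v,-\|\phi\|_{L^\infty(\partial B)}\}$, which are admissible competitors with no larger energy, forces $\|v\|_{L^\infty(B)}\le\|\phi\|_{L^\infty(\partial B)}$, and comparison with $\phi$ itself gives $\int_B|x_n|^a|\nabla v|^2\le J(\phi)$, so $\mathcal{M}$ is bounded in $H^1(a,B)$.

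Next, fix a countable dense set $\{x_j\}_{j\ge1}\subset B$ and put $U(x)=\sup\{v(x):v\in\mathcal{M}\}$, finite by the above. For each $j$ choose $v_{j,m}\in\mathcal{M}$ with $v_{j,m}(x_j)\to U(x_j)$ as $m\to\infty$, and set $w_N=\max\{v_{j,m}:j,m\le N\}\in\mathcal{M}$. The sequence $\{w_N\}$ is nondecreasing and uniformly bounded, hence converges pointwise to some $w$. By Corollary \ref{C: holderconv} a subsequence converges locally in $C^\beta$ and weakly in $H^1(a,B)$ to a minimizer, which must agree with $w$; moreover each $w_N-\phi$ lies in the weakly closed subspace $H^1_0(a,B)$ with uniformly bounded norm, so $w-\phi\in H^1_0(a,B)$ as well. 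Thus $w\in\mathcal{M}$ and $w_N\uparrow w$. By construction $w(x_j)\ge U(x_j)$ for all $j$, so continuity and density give $w\ge U$ on $B$; since also $w\le U$ because $w\in\mathcal{M}$, we obtain $w=U=:u^*\in\mathcal{M}$. Every $v\in\mathcal{M}$ then satisfies $v\le U=u^*$, so $u^*$ is the maximal minimizer.

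For the symmetry, let $R$ be a rotation of $\R^n$ fixing the line $(0,\dots,0,x_n)$, i.e.\ $R(x',x_n)=(R'x',x_n)$ with $R'\in O(n-1)$. Then $R$ maps $B$ onto itself, preserves the thin space $\R^{n-1}\times\{0\}$ and the weight $|x_n|^a$, and by hypothesis $\phi\circ R=\phi$ on $\partial B$, so $v\mapsto v\circ R$ is a bijection of $\mathcal{M}$. In particular $u^*\circ R\in\mathcal{M}$, whence $u^*\circ R\le u^*$ by maximality; applying this with $R^{-1}$ in place of $R$ yields $u^*\le u^*\circ R$. Therefore $u^*\circ R=u^*$ for every such $R$, which is the claimed symmetry about $(0,\dots,0,x_n)$, and the same reasoning applies to the minimal minimizer.

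The step I expect to be the main obstacle is precisely the passage from the uncountable family $\mathcal{M}$ to a single minimizer attaining the supremum $U$ at every point. The countable-exhaustion argument reduces this to two facts already in hand: the local $C^\beta$- and weak $H^1(a,B)$-compactness of $\mathcal{M}$ (Corollary \ref{C: holderconv}), and the stability of the boundary condition under monotone limits of minimizers, for which the uniform $H^1(a,B)$-bound together with the weak closedness of $H^1_0(a,B)$ are the essential inputs.
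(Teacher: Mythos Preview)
Your argument is correct and rests on the same two tools the paper invokes---the Lattice Principle and a limiting procedure---but you have filled in considerably more detail than the paper's two-sentence sketch. The one organisational difference is the order in which you obtain the two conclusions: you first construct $u^*$ as the pointwise supremum over all of $\mathcal{M}$ (via a countable exhaustion and compactness) and then deduce the rotational symmetry a posteriori from the fact that rotations about the $x_n$-axis act on $\mathcal{M}$, whereas the paper applies the Lattice Principle directly to rotations of a given minimizer to build in symmetry from the start, leaving the maximality to an unspecified ``limiting procedure.'' Your route has the merit of cleanly separating the two properties and of making transparent that the existence of a maximal minimizer does not itself require the symmetry hypothesis on $\phi$.

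One small point to tighten: Corollary~\ref{C: holderconv} as stated does not assert that the limit $u_0$ is again a minimizer, so when you write ``converges \ldots\ to a minimizer'' you are using more than the corollary gives. This is easily supplied---weak lower semicontinuity of the weighted Dirichlet energy handles the gradient term, and the monotonicity $w_N\uparrow w$ gives $\chi_{\{w_N>0\}}\uparrow\chi_{\{w>0\}}$ and $\chi_{\{w<0\}}\le\chi_{\{w_N<0\}}$, so $J(w)\le\liminf J(w_N)$---but it should be said explicitly.
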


\begin{proof}
By Theorem \ref{T: Maximumprinciple} the maximum (minimum) of rotations will be a minimizer. $u^*$ may be obtained by a limiting procedure.
\end{proof}

To prove Theorem \ref{T: nondegeneracy} we will need the following two lemmas.

\begin{lemma}    \label{L: collapse}
There exists a modulus of continuity $\sigma $ with $\sigma (0)=0$ such that if $u_{\epsilon}$ is any minimizer such that $u |_{\partial B_1} \equiv \epsilon$, then  
\[
\int_{B_{1}'}{\lambda^+ \chi_{ \{u_{\epsilon} > 0 \} }} \leq \sigma (\epsilon) 
\]
\end{lemma}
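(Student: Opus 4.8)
The plan is to prove Lemma \ref{L: collapse} by a compactness/contradiction argument combined with an energy comparison against the $a$-harmonic replacement. Suppose the conclusion fails: then there is $\delta_0>0$ and a sequence $\epsilon_j \to 0$ together with minimizers $u_j$ with $u_j|_{\partial B_1} \equiv \epsilon_j$ but $\int_{B_1'} \lambda^+ \chi_{\{u_j>0\}} \geq \delta_0$. Since the boundary data are uniformly bounded, Corollary \ref{C: holderconv} gives a subsequence with $u_j \to u_0$ in $C^\beta$ on compact subsets and weakly in $H^1(a,U)$, and $u_0$ is again a minimizer with $u_0|_{\partial B_1} = 0$. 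The heart of the matter is to show $u_0 \equiv 0$; once we have that, we derive the contradiction from the measure-theoretic lower bound by noting that $\{u_j>0\}\cap B_{1-\rho}'$ must eventually be contained in the (small) region where $u_0$ has not yet forced sign — more precisely, uniform convergence forces $\int_{B'_{1-\rho}}\chi_{\{u_j>0\}}\to 0$ while the boundary collar $B_1'\setminus B_{1-\rho}'$ contributes at most $C\rho$, and taking $\rho$ small then $j$ large beats $\delta_0$.

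To see $u_0 \equiv 0$: first, $u_0$ is a minimizer with zero boundary data, so comparing its energy with that of the trivial competitor $v\equiv 0$ yields
\[
\int_{B_1}|x_n|^a|\nabla u_0|^2 + \int_{B_1'}\bigl(\lambda^+\chi_{\{u_0>0\}}+\lambda^-\chi_{\{u_0<0\}}\bigr)\,d\H^{n-1} \leq \int_{B_1'}\bigl(\lambda^+\chi_{\{0>0\}}+\lambda^-\chi_{\{0<0\}}\bigr)\,d\H^{n-1} = 0,
\]
which forces $\int_{B_1}|x_n|^a|\nabla u_0|^2 = 0$, hence $u_0$ is constant, and since $u_0|_{\partial B_1}=0$ we get $u_0 \equiv 0$. (One should be slightly careful that $v\equiv 0$ is an admissible competitor, i.e. $0 - \phi \in H_0^1$; this holds because $u_0$ itself has zero trace on $\partial B_1$, so one may compare $u_0$ against $0$ directly on the ball $B_1$.)

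The remaining — and only genuinely delicate — step is quantifying the convergence of the coincidence sets. The natural route is: fix $\rho>0$; by the $C^\beta$ convergence $u_j \to 0$ uniformly on $\overline{B_{1-\rho}}$, so for $j$ large $\sup_{B_{1-\rho}}|u_j| < \eta$ for any prescribed $\eta$. This alone does not kill $\chi_{\{u_j>0\}}$, but one can instead use nondegeneracy: by Theorem \ref{T: nondegeneracy}, if $u_j \leq \epsilon' \delta(x)^s$ on the boundary of a small ball $B_\delta(y',0) \subset B_{1-\rho}$ with $\epsilon'$ the nondegeneracy threshold, then $u_j \leq 0$ in $B'_{t\delta}(y',0)$; choosing the scale $\delta$ comparable to $\rho$ and using $\sup_{B_{1-\rho}}|u_j|<\eta < \epsilon' \delta^s$ for $j$ large, we conclude $u_j \leq 0$ throughout $B'_{1-2\rho}$. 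Hence $\int_{B_1'}\lambda^+\chi_{\{u_j>0\}} \leq \lambda^+ \H^{n-1}(B_1'\setminus B'_{1-2\rho}) \leq C\rho$. Picking $\rho$ with $C\rho < \delta_0/2$ and then $j$ large gives the contradiction. The modulus $\sigma$ is then extracted in the standard way: $\sigma(\epsilon) := \sup\{\int_{B_1'}\lambda^+\chi_{\{u_\epsilon>0\}} : u_\epsilon \text{ minimizer},\ u_\epsilon|_{\partial B_1}\equiv\epsilon\}$, which the above shows tends to $0$ as $\epsilon\to 0$, and $\sigma(0)=0$ since $u\equiv 0$ is the only minimizer with zero boundary data by the energy-comparison argument above. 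The main obstacle is making the passage from "$u_j$ small in sup norm" to "$\{u_j>0\}$ small in measure" rigorous — this is exactly where nondegeneracy (rather than mere continuity) is essential, and it must be applied on a covering of $B'_{1-2\rho}$ by balls of a fixed small radius.
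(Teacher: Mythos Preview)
Your argument is circular. You invoke Theorem~\ref{T: nondegeneracy} (nondegeneracy) to pass from ``$u_j$ small in sup norm'' to ``$\{u_j>0\}$ has small measure,'' but in the paper Lemma~\ref{L: collapse} is precisely one of the ingredients used to \emph{prove} Theorem~\ref{T: nondegeneracy} (see the appendix: the proof of nondegeneracy reduces to the maximal minimizer $u_\epsilon^*$ with constant boundary value $\epsilon$, and then Lemma~\ref{L: collapse} is what forces $\int_{B_1'}\lambda^+\chi_{\{u_\epsilon^*>0\}}\to 0$). So you cannot appeal to nondegeneracy at this stage. You correctly identified that this is the only genuinely delicate step --- and it is exactly the step that the lemma is meant to supply, not consume.

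There is also a softer gap: Corollary~\ref{C: holderconv} gives convergence only on $U\Subset B_1$, so the assertion that the limit $u_0$ is a minimizer in $B_1$ with $u_0|_{\partial B_1}=0$ needs additional justification (boundary regularity, a competitor argument for minimality of the limit). This is fixable, but unnecessary.

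The paper's proof avoids all of this with a one-line direct energy comparison: it writes down an explicit competitor $v_\epsilon$ equal to $0$ on $B_{1-\sqrt\epsilon}$ and linearly interpolating to $\epsilon$ on $\partial B_1$, checks $J(v_\epsilon)\to 0$ as $\epsilon\to 0$, and concludes via
\[
\int_{B_1'}\lambda^+\chi_{\{u_\epsilon>0\}}\;\le\; J(u_\epsilon)\;\le\; J(v_\epsilon)=:\sigma(\epsilon).
\]
No compactness, no nondegeneracy, no limits of minimizers --- just the minimality inequality against a single test function. Your instinct to compare against $v\equiv 0$ was pointing in the right direction; the fix is to compare $u_\epsilon$ directly against a competitor with the \emph{same} (nonzero) boundary data $\epsilon$ whose energy is nonetheless small.
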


\begin{proof}
Define
\[
v_{\epsilon} = 
\begin{cases}
  0 & \text{for } |x| \leq 1 - \sqrt{\epsilon} \\
	\sqrt{\epsilon}(|x|-1) + \epsilon & \text{ otherwise}
\end{cases}
\]
It is easy to see that $J(v_\epsilon) \to 0$ as $\epsilon \to 0$.
Now since 
\[
\int_{B_{1}'}{\lambda^+ \chi_{ \{u_{\epsilon} > 0 \} }} \leq J(u_{\epsilon}) \leq J(v_\epsilon)
\]
the lemma is proven. 
\end{proof}

This next lemma will strengthen Corollary \ref{C: nondegeneracy} in the case when our boundary values are identically constant.
\begin{lemma}     \label{L: steiner}
Let $u$ be a minimizer such that the values of $u|_{\partial B} = M$. Then $u$ is symmetric about the line $(0, \dots, 0, x_n)$, and the coincidence set $\Lambda(u) = \overline{B}_{\rho}'$ for some $\rho \geq 0$. 
\end{lemma}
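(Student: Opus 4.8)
The plan is to combine Steiner symmetrization in the directions orthogonal to the $x_n$-axis with the radial/cylindrical structure forced by having constant boundary data $M$ on $\partial B$. First I would record the symmetry statement: since the boundary values $u|_{\partial B}=M$ are invariant under every rotation fixing the $x_n$-axis, Corollary \ref{C: nondegeneracy} applies and produces a rotationally symmetric maximal (or minimal) minimizer; but in fact I want to argue that the minimizer $u$ in the hypothesis is itself rotationally symmetric about $(0,\dots,0,x_n)$. For this I would use the Lattice Principle (Theorem \ref{T: Maximumprinciple}): given any rotation $R$ about the $x_n$-axis, $u\circ R$ is again a minimizer with the same (constant) boundary data, so $\max\{u,u\circ R\}$ and $\min\{u,u\circ R\}$ are minimizers as well. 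Iterating / averaging over the rotation group and passing to a limit as in the proof of Corollary \ref{C: nondegeneracy}, together with a uniqueness-type argument peculiar to constant boundary data, forces $u=u\circ R$ for all such $R$; hence $u=u(|x'|,x_n)$.

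Next I would pin down the coincidence set. Because $u$ is rotationally symmetric about the $x_n$-axis, the coincidence set $\Lambda(u)\subset B'$ is itself rotationally symmetric in $x'$, i.e.\ a union of concentric annuli (and possibly a central ball) in the thin ball $B'$. The content of the lemma is that it is exactly a closed ball $\overline B_\rho'$ centered at the origin — no annular gaps, no outer annulus. To rule out "holes," I would use a Steiner-type rearrangement in the thin space: if the coincidence set had a gap, one could slide the positivity/negativity mass radially inward and strictly (or at least weakly) decrease the Dirichlet energy $\int_{B}|x_n|^a|\nabla u|^2$ while not increasing the thin measure term $\int_{B'}\lambda^+\chi_{\{u>0\}}+\lambda^-\chi_{\{u<0\}}$, contradicting minimality unless the set was already a centered ball. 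Concretely, one replaces $\{u(\cdot,0)\neq 0\}$ by the concentric ball of the same $\H^{n-1}$-measure and takes the corresponding $a$-harmonic replacement off the coincidence set; the weighted Dirichlet integral decreases under this symmetrization because the weight $|x_n|^a$ is itself radially symmetric in $x'$ and the relevant polarization/Pólya–Szegő inequality holds for $A_2$-weighted Dirichlet energies on balls. Combined with the $a$-harmonic replacement not increasing energy, one gets $J(\text{rearranged})\le J(u)$ with equality forcing $\Lambda(u)=\overline B_\rho'$.

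The main obstacle I expect is the Pólya–Szegő / Steiner-symmetrization step in the weighted setting: one must justify that radial rearrangement in the $x'$-variables of the thin coincidence set, followed by $a$-harmonic extension, does not increase $\int_B |x_n|^a|\nabla u|^2$. This is not a completely standard rearrangement inequality because the competitor must remain admissible (equal to $M$ on $\partial B$) and because the weight $|x_n|^a$ degenerates on the thin space; I would handle it by working slice-by-slice in $x_n$ (for each fixed $x_n$ the weight is a constant, so the classical Pólya–Szegő inequality applies on the slice $B\cap(\R^{n-1}\times\{x_n\})$), then integrating in $x_n$ against $|x_n|^a\,dx_n$ and checking that the rearranged function still lies in $H^1(a,B)$ with the right trace, using the Caccioppoli and Sobolev estimates from Section \ref{S: weights}. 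The equality case — needed to conclude $\Lambda(u)$ is *exactly* a centered ball rather than merely contained in one of the right measure — follows from the equality case in Pólya–Szegő together with the uniqueness of the $a$-harmonic replacement.
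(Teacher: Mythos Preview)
Your argument for rotational symmetry via the Lattice Principle has a genuine gap. Theorem \ref{T: Maximumprinciple} and Corollary \ref{C: nondegeneracy} only tell you that $\max\{u,u\circ R\}$ and $\min\{u,u\circ R\}$ are again minimizers, and that \emph{some} maximal minimizer is rotationally symmetric. They do not force the given minimizer $u$ to coincide with $u\circ R$; for that you invoke ``a uniqueness-type argument peculiar to constant boundary data,'' but no uniqueness statement is available in this problem (and the paper explicitly says minimizers need not be unique). So as written, the first step does not close.

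The irony is that the correct argument is already sitting in your ``main obstacle'' paragraph, and it is exactly what the paper does. The paper extends $u$ by $M$ outside $B$, sets $w=M-u$, and applies Steiner symmetrization to $w$ along lines parallel to $\R^{n-1}\times\{0\}$; since on each slice $\{x_n=\text{const}\}$ the weight $|x_n|^a$ is constant, the classical P\'olya--Szeg\H{o} inequality applies slice by slice and integrates up to give
\[
\int_{B}|x_n|^a|\nabla u|^2 \;\ge\; \int_{B}|x_n|^a|\nabla v|^2,
\]
with equality only if $u$ is already Steiner symmetric in every $x'$-direction. Because $\H^{n-1}(\{u(\cdot,0)=0\})$ is preserved, the symmetrized function is an admissible competitor with no larger $J$, and the equality case delivers \emph{both} conclusions at once: rotational symmetry about the $x_n$-axis and $\Lambda(u)=\overline{B}'_\rho$. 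There is no need for the Lattice Principle detour, nor for the intermediate ``rearrange the thin zero set and take the $a$-harmonic replacement'' construction you describe (which is a capacity-type statement, not a direct consequence of P\'olya--Szeg\H{o}, and would require its own justification). Apply the slice-by-slice symmetrization to $u$ itself from the start and the lemma follows in one stroke.
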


\begin{proof}
Extend $u$ to be a function on the cube $Q$ with side length 2, by defining $u(x) = M$ for $x \notin B$. We now apply Steiner symmetrization (as defined in \cite[page 82]{bK85}) to the function $w = M - u$ on lines parallel to $\R^{n-1} \times \{0\} $.

If we only consider $\{x \mid \ |x_n| > \epsilon\}$,  then $w$ is Lipschitz. Then by \cite[page 82]{bK85}, if we Steiner symmetrize $w$ to obtain $v$ we get:
\[
\int_{B \cap \{|x_n|> \epsilon \} }{|x_n|^a|\nabla u|^2} = \int_{B \cap \{|x_n|> \epsilon \} }{|x_n|^a |\nabla w|^2} 
\geq \ \int_{B \cap \{|x_n|> \epsilon \} }{|x_n|^a|\nabla v|^2} 
\]
Equality is only achieved if $w$ (and hence $u$) is already Steiner symmetric along the lines we symmetrize.
Furthermore, $v$ will have the same boundary values as $w$ on $\partial B$. 
Then by letting $\epsilon \to 0$ we obtain
\[
\int_{B}{|x_n|^a|\nabla u|^2} = \int_{B}{|x_n|^a|\nabla w|^2} \geq \ \int_{B}{|x_n|^a|\nabla v|^2}
\]
Finally, we note that $\H^{n-1}(\{u=0\})$ is invariant under Steiner symmetrization. Then by a limiting process, we see that $u$ is a minimizer if and only if $u$ is symmetric about the line $(0, \dots ,0, x_n)$ and $\{u=0\}$ is a connected thin ball and centered at the origin.
\end{proof}

We are now able to prove the nondegeneracy result.
\begin{proof}[Theorem \ref{T: nondegeneracy}]
First we note that by rescaling we only need to prove Theorem \ref{T: nondegeneracy} on the unit ball $B$. Also, Theorem \ref{T: Maximumprinciple} and Corollary \ref{C: nondegeneracy} reduce Theorem \ref{T: nondegeneracy} to proving the theorem for the maximal minimizer $u_{\epsilon}^*$ where $u_{\epsilon}^*|_{\partial B} = \epsilon$. Lemma \ref{L: steiner} proves that 
\[
\Lambda(u_{\epsilon}^*)= \overline{B}_{\rho}'
\]
for some $\rho < 1$. Lemma \ref{L: collapse} shows
\[
\int_{B_1'} {\lambda^+ \chi_{ \{u_{\epsilon}^*>0\} }} \ \to 0 \text{ as } \epsilon \to 0
\]
Then there exists $\epsilon$ depending only on $\{t,\lambda^+ \}$ such that if $u |_{\partial B} = \epsilon$ then 
\[
u |_{ B_{t}'} \equiv 0
\] 
The case for which $u \geq -\epsilon$ is proven similarly.
\end{proof}

\section{Proof of Weiss monotonicity formula}

\begin{proof}[Theorem \ref{T: weiss}]
The proof is a slight modification of the proof for the case $a=0$ given in \cite{mA11}. Since $u$ is not necessarily differentiable we follow the ideas of using domain variation given by G. Weiss in \cite{gW98}.
Since our formula is defined for a ball centered on the $\R^{n-1} \times \{0\}$ plane, we may assume without loss of generality that $x_0 =0$. Let $\tau_{\epsilon}(x) = x + \epsilon \eta_{k} x$ where 
\begin{equation}    \label{E: etak}
\eta_{k}(x) = \max \left(0 , \min ( 1, \frac{r- |x|}{k} )  \right)
\end{equation}
Then $\eta_{k}(x) = 0$ outside of $B_r (0)$, and 
\[
\eta_{k}(x) \to \chi_{ \{B_r (0)\} } \text{ as } k \to 0
\]
Notice that $\tau_{\epsilon}(x) = x(1+ \epsilon \eta_k (x))$ leaves $\R^{n-1} \times \{0\}$ invariant. Now
\[
\nabla \eta_k (x) = \frac{-x}{|x|k} \chi_{ \{B_r \setminus B_{r-k} \} }
\]
and
\[
D \tau_{\epsilon} (x) = I + \epsilon \left(\eta_k (x) I + x \nabla \eta_k (x)  \right) + o(\epsilon)
\]
Now let $u_{\epsilon} \left( \tau_{ \epsilon} (x) \right) = u(x)$ and $y = \tau_{\epsilon}(x)$. Then
\[
\frac{1}{\epsilon} \left(J(u_{\epsilon}) - J(u)  \right) \geq 0
\]
and 
\begin{alignat*}{2}      
J(u_{\epsilon}) - J(u) 
& = \int_{D}{|y_n|^a|\nabla u_\epsilon (y)|^2} + \int_{D'}{\lambda^+ \chi_{\{u_\epsilon > 0 \} }
+ \lambda^- \chi_{\{u_\epsilon < 0 \}} }    \\
& \quad - \int_{D}{|x_n|^a|\nabla u (x)|^2} - \int_{D'}{\lambda^+ \chi_{\{u > 0 \} }
 + \lambda^- \chi_{\{u < 0 \}} }
\end{alignat*}
Now
\begin{alignat*}{2}
\text{det } D \tau_{\epsilon}(x)  &= 1 + \epsilon \ \text{ trace } D(\eta_k(x)x) + o(\epsilon) \\
\text{trace } D(\eta_k(x)x)  &= \text{ div } (\eta_k(x)x) \\
D \tau_{\epsilon}^{-1} &= I - \epsilon D(\eta_k(x)x) + o(\epsilon) 
\end{alignat*}
Then substituting these into the equality above we obtain that $J(u_\epsilon) -J(u)$
\begin{alignat*}{2}  
& = \int_{D}{|x_n + \epsilon \eta_k (x)x_n|^a|\nabla u(x)(D \tau_\epsilon (x))^{-1}|^2 \text{ det } D \tau_\epsilon} +o(\epsilon) \\
& \quad + \int_{D'}{\left(\lambda^+ \chi_{ \{u>0\} } + \lambda^- \chi_{ \{u<0\} } \right) \text{ det } D \tau_{\epsilon}' (x)} + o(\epsilon) \\
& \quad - \int_{D}{|x_n|^a | \nabla u|^2} - \int_{D'}{\lambda^+ \chi_{ \{u>0\} } + \lambda^- \chi_{ \{u<0\} }} \\
& = \int_{D}{|x_n + \epsilon \eta_k (x)x_n|^a\left(|\nabla u|^2 - 2 \epsilon \nabla u D(\eta_k (x)x) \nabla u \right) 
\left( 1+ \epsilon \text{ div }\eta_k (x)x  \right)} \\
& \quad + \int_{D'}{\left(\lambda^+ \chi_{ \{u>0\} } + \lambda^- \chi_{ \{u<0\} } \right) 
\left(1 + \epsilon \text{ div } \eta_{k}' (x',0)x'   \right)}  + o(\epsilon) \\
& \quad - \int_{D}{|x_n|^a| \nabla u|^2} - \int_{D'}{\lambda^+ \chi_{ \{u>0\} } + \lambda^- \chi_{ \{u<0\} }} \\
& = \int_{D}{|x_n + \epsilon \eta_k (x)x_n|^a|\nabla u|^2 - |x_n|^a|\nabla u|^2} \\
& \quad + \epsilon \int_{D}{ |x_n + \epsilon \eta_k (x)x_n|^a 
 \left( |\nabla u|^2 \text{ div } \eta_k (x)x -2 \nabla u D(\eta_k (x)x) \nabla u \right)} + o(\epsilon) \\
& \quad + \epsilon \int_{D'}{ \left(\lambda^+ \chi_{ \{u>0\} } + \lambda^- \chi_{ \{u<0\} } \right)
\left( \text{ div } \eta_{k}' (x',0)x'   \right)} + o(\epsilon)
\end{alignat*}

Now we may let $\epsilon$ be both positive and negative and the limit is the same, so
\[
\lim_{\epsilon \to 0} \frac{1}{\epsilon} \left[J(u_\epsilon) - J(u)  \right] = 0
\]
Then we obtain the following equality:
\begin{alignat*}{2}     
0 & = \int_{D}{a|x_n|^a |\nabla u|^2 \eta_k(x)} +
\int_{D}{|x_n|^a \left(|\nabla u|^2 \text{ div } \eta_k (x)x -2  \nabla u D(\eta_k (x)x) \nabla u \right)} \\
& \quad + \int_{D'}{\left(\lambda^+ \chi_{ \{u>0\} } + \lambda^- \chi_{ \{u<0\} } \right) 
\left( \text{ div } \eta_{k}' (x',0)x'   \right)}
\end{alignat*}
We have 
\begin{alignat*}{2}
\text{div } \eta_k (x)x = n \eta_k (x) - \frac{|x|}{k}\chi_{B_r \setminus B_{r-k}} \\
\text{div }(\eta_k (x',0)x') = 
(n-1) \eta_{k}' - \frac{|x'|}{k} \chi_{B_{r}' \setminus B_{r-k}'}
\end{alignat*}
Then
\begin{alignat*}{2}
0 &= (n-2+a)\int_{B_r}{|x_n|^a |\nabla u|^2 \eta_{k}} - \frac{1}{k}\int_{B_r \setminus B_{r-k}}{|x| |x_n|^a \left( |\nabla u|^2 - 2|\langle \nabla u, \frac{x}{|x|} \rangle|^2 \right)}   \\
  & + \quad (n-1)\int_{B_{r}'}{(\lambda^+ \chi_{ \{u>0\} } + \lambda^- \chi_{ \{u<0\} }     )\eta_{k}'}  \\
  & \quad - \frac{1}{k}\int_{B_{r}' \setminus B_{r-k}'}{|x'|(\lambda^+ \chi_{ \{u>0\} } +    \lambda^- \chi_{ \{u<0\} } )}
\end{alignat*}
so as $k \to 0$
\begin{equation}   \label{E: almgrenmin}
\begin{aligned}
0 & = (n-2+a)\int_{B_r}{|x_n|^a |\nabla u|^2} - r \int_{\partial B_r}{|x_n|^a \left( |\nabla u|^2 -2u_{\nu}^2 \right)} \\
  & \quad + (n-1)\int_{B_{r}'}{\lambda^+ \chi_{ \{u>0\} } + \lambda^- \chi_{ \{u<0\} }}
    -r \int_{\partial B_{r}'}{\lambda^+ \chi_{ \{u>0\} } + \lambda^- \chi_{ \{u<0\} }} \\
  & = (n-1) \int_{B_r}{|x_n|^a|\nabla u|^2} - r \int_{\partial B_r}{|x_n|^a|\nabla u|^2} \\
  & \quad + (n-1) \int_{B_{r}'}{\lambda^+ \chi_{ \{u>0\} } + \lambda^- \chi_{ \{u<0\} }}
    -r \int_{\partial B_{r}'}{\lambda^+ \chi_{ \{u>0\} } + \lambda^- \chi_{ \{u<0\} }} \\
  & \quad -(1-a) \int_{B_r}{|x_n|^a|\nabla u|^2} + 2r \int_{\partial B_r}{|x_n|^a u_{\nu}^2} 
\end{aligned}
\end{equation}
By Proposition \ref{P: byparts} 
\[
\int_{B_r}{|x_n|^a |\nabla u|^2} = \int_{\partial B_r}{|x_n|^a u u_{\nu}} 
\]
so
\begin{alignat*}{2}0 & = (n-1) \int_{B_r}{|x_n|^a |\nabla u|^2} - r \int_{\partial B_r}{|x_n|^a|\nabla u|^2} \\
  & \quad + (n-1) \int_{B_{r}'}{\lambda^+ \chi_{ \{u>0\} } + \lambda^- \chi_{ \{u<0\} }}
    -r \int_{\partial B_{r}'}{\lambda^+ \chi_{ \{u>0\} } + \lambda^- \chi_{ \{u<0\} }} \\
  & \quad - (1-a)\int_{\partial B_r}{|x_n|^a u \cdot u_{\nu}} + 2r \int_{\partial B_r}{|x_n|^a u_{\nu}^2} 
\end{alignat*}
Now multiply both sides of the equation by $-r^{-n}$ to obtain that for almost every $r$
\begin{alignat*}{2}
0 & = \left[\frac{1}{r^{n-1}} \int_{B_r}{|x_n|^a|\nabla u|^2} \right]'
    + \left[\frac{1}{r^{n-1}} \int_{B_r '}{\lambda^+ \chi_{ \{u>0\} } + \lambda^- \chi_{ \{u<0\} }} \right]' \\
  & \quad  
   - \frac{1}{r^{n-1}} \int_{\partial B_r}{|x_n|^a\left(\frac{(1-a)u u_\nu}{r} - 2 u_{\nu}^2 \right)}
\end{alignat*}
For $\epsilon < r$ we may integrate and use Fubini's theorem to obtain
\begin{alignat*}{2}
\int_{\epsilon}^{r} \frac{1}{\rho^{n-1+a}} \int_{\partial B_{\rho}}{|x_n|^a 2uu_{\nu}} d\sigma \ d\rho
&=\int_{\partial B_1} \int_{\epsilon}^{r}{|x_n|^a 2u(\rho x)u_{\nu}(\rho x)} d\rho d\sigma  \\
&=\int_{\partial B_1} |x_n|^a \int_{\epsilon}^{r}{ 2u(\rho x)u_{\nu}(\rho x)} d\rho d\sigma  \\
&=\int_{\partial B_1}{ |x_n|^a  \left(u^2(rx) - u^2(\epsilon x) \right) d \sigma } \\
&=- c + \frac{1}{r^{n-1+a}} \int_{\partial B_r}{|x_n|^a u^2} d \sigma 
\end{alignat*}
So for almost every $r$
\begin{equation}      \label{E: der}
\frac{\text{d}}{\text{dr}}\left[\frac{1-a}{2r^n} \int_{\partial B_r}{|x_n|^a u^2} \right] = 
\frac{1}{r^{n-1}} \int_{\partial B_r}{|x_n|^a \left( \frac{(1-a)u u_{\nu}}{r} - \frac{(1-a)^2u^2}{2r^2}  \right)}
\end{equation}
We then add and subtract the piece from \eqref{E: der} to obtain for almost every $r$
\begin{alignat*}{2}
0 & = \left[\frac{1}{r^{n-1}} \int_{B_r}{|x_n|^a|\nabla u|^2} \right]'
    + \left[\frac{1}{r^{n-1}} \int_{B_r '}{\lambda^+ \chi_{ \{u>0\} } + \lambda^- \chi_{ \{u<0\} }} \right]' \\
  & \quad - \left[\frac{1-a}{2r^n}  \int_{\partial B_r}{|x_n|^a u^2} \right]' 
    - \frac{1}{r^{n-1}} \int_{\partial B_r}{|x_n|^a \left(\frac{(1-a)u}{\sqrt{2}r} - \sqrt{2} u_{\nu} \right)^2}
\end{alignat*}

Thus, $W' \geq 0$, and $W' = 0$ on the interval $r_1 <r< r_2$ if and only if $u$ is homogeneous of degree $s=(1-a)/2$ on the ring $r_1 < |x| < r_2$.
\end{proof}

\bibliographystyle{amsplain}
\bibliography{Bibliography}

\end{document}